\definecolor{truepurp}{RGB}{160,15,55}
\tikzstyle{vertex}=[circle, draw, inner sep=0pt, minimum size=6pt]
\newcommand{\vertex}{\node[vertex]}
\newtheorem{theorem}{Theorem}[section]
\patchcmd{\ttlh@hang}{\parindent\z@}{\parindent\z@\leavevmode}{}{}
\patchcmd{\ttlh@hang}{\noindent}{}{}{}
\titleformat*{\section}{\large\bfseries}
\titleformat*{\subsection}{\small\bfseries}
\titleformat*{\subsubsection}{\small\bfseries}
\titleformat*{\paragraph}{\small\bfseries}
\titleformat*{\subparagraph}{\small\bfseries}
\newcommand{\N}{\mathbb{N}}
\newcommand{\R}{\mathbb{R}}
\newcommand{\Z}{\mathbb{Z}}
\newcommand{\E}{\mathbb{E}}
\newcommand{\p}{\mathbb{P}}
\newcommand{\md}{\ensuremath{\mathrm{d}}}
\newcommand{\eps}{\varepsilon}
\newcommand{\cC}{\mathcal{C}}
\newcommand{\cF}{\mathcal{F}}
\newcommand{\cP}{\mathcal{P}}
\newcommand{\mz}{\mathbf{0}}
\newcommand{\pre}{\text{PRE}}
\newcommand{\T}{\textbf{T}}
\newcommand{\bp}{\mathbf{P}}
\newcommand{\be}{\mathbf{E}}
\newcommand{\ov}{\text{OV}}
\newtheorem{lemma}[theorem]{Lemma}
\newtheorem{remark}[theorem]{Remark}
\newtheorem{proposition}[theorem]{Proposition}
\newtheorem{claim}[theorem]{Claim}
\newtheorem{conjecture}[theorem]{Conjecture}
\begin{document}

	\title{\vspace{-1cm}Truncation of long-range percolation with non-summable interactions in dimensions $d\geq 3$}

	\author{Johannes B\"aumler\footnote{ \textsc{Department of Mathematics, University of California, Los Angeles}.\\ \text{  \ \ \ \ \ \ } E-Mail: \href{mailto:jbaeumler@math.ucla.edu}{jbaeumler@math.ucla.edu}
	}
	}

	\maketitle
	
	\vspace{-3mm}
	
	\begin{center}
		\parbox{13cm}{ \textbf{Abstract.} 
			Consider independent long-range percolation on $\mathbb{Z}^d$ for $d\geq 3$. Assuming that the expected degree of the origin is infinite, we show that there exists an $N\in \N$ such that an infinite open cluster remains after deleting all edges of length at least $N$. For the isotropic case in dimensions $d\geq 3$, we show that if the expected degree of the origin is at least $10^{400}$, then there exists an infinite open cluster almost surely. We also use these results to prove corresponding statements for the long-range $q$-states Potts model.
	}
	\end{center}
	
	\let\thefootnote\relax\footnotetext{{\sl MSC Class}: 82B43, 60K35 }
	\let\thefootnote\relax\footnotetext{{\sl Keywords}: Long-range percolation, phase transition, truncation, non-summable interaction, long-range Potts model}

	\hypersetup{linkcolor=black}
	\tableofcontents
	\hypersetup{linkcolor=blue}

\section{Introduction}

We consider long-range bond percolation on $\Z^d$, where for each $x,y \in \Z^d$ the edge $\{x,y\}$ is open with probability $p_{x-y} \in \left[0,1\right)$, independent of all other edges. We write $\p$ for the associated measure, we write $x \sim y$ if the edge between $x$ and $y$ is open, and we write $x \leftrightarrow y$ if there exists a path of open edges from $x$ to $y$.  For $x\in \Z^d$, we define
\begin{equation*}
	K_x=\left\{y\in \Z^d : x \leftrightarrow y\right\}
\end{equation*}
as the {\sl open cluster} containing $x$. We are interested in the case of non-summable interactions, i.e., when
\begin{equation}\label{eq:non-summable}
	\sum_{x \in \Z^d \setminus \{\mz\}} p_x = \infty.
\end{equation}
The independence of different edges together with a Borel-Cantelli argument directly implies that $|K_\mz|=\infty$ almost surely. In particular, an infinite open cluster almost surely exists. This differs from the summable situation, in which $\sum_{x \in \Z^d \setminus \{\mz\}} p_x < \infty$. Here, an infinite cluster may or may not exist, depending on the precise specifications of the model. For the special case where $\sum_{x \in \Z^d \setminus \{\mz\}} p_x < 1$, an infinite cluster does almost surely not exist, as shown by a comparison with a sub-critical branching process. Note that by linearity of expectation, the quantity $\sum_{x \in \Z^d \setminus \{\mz\}} p_x$ is just the expected degree $\E\left[\deg(\mz)\right]$ of the origin, where the {\sl degree} of a vertex $y$ (denoted by $\deg(y)$) is the number of open edges adjacent to $y$.\\

In this paper, we study the so-called {\sl truncation question} in the non-summable case. This question asks whether long edges are necessary for an infinite cluster to exist. Say that $x \overset{\leq n}{\longleftrightarrow} y$ if there is an open path
from to using only edges of length $n$. More formally, for $x,y \in \Z^d$ we write $x \overset{\leq n}{\longleftrightarrow} y$ if there exists an open path from $x$ to $y$ that uses only the open edges $\{a,b\}$ with $\|a-b\|\leq n$. For $x\in \Z^d$, we define
\begin{equation*}
	K_x^n=\left\{y\in \Z^d : x \overset{\leq n}{\longleftrightarrow} y \right\}
\end{equation*}
as the set of all points $y \in \Z^d$ that are connected to $x$ by an open path using edges of length at most $n$. The question now is whether there exists $n \in \N$ such that $\p\left(|K_\mz^n|=\infty\right) > 0$. Note that by translation invariance and ergodicity, this is equivalent to the question of whether there exists an infinite cluster with probability one after the truncation. The truncation question in the non-summable regime has been frequently considered in previous literature \cite{alves2017note,campos2020truncation,van2016truncated,friedli2004longrange,friedli2006truncation,de2008truncated,menshikov2001note,sidoravicius1999truncated}. In this paper, we solve the problem under minimal assumptions in dimensions $d\geq 3$.\\

Throughout this paper, we assume that the collection of probabilities $\left(p_{x}\right)_{x\in \Z^d}$ is {\bf symmetric}, meaning that
\begin{align}
	\label{eq:symmetry} & p_{x} = p_{y} \text{ for all $x,y \in \Z^d$ for which $|\langle x,e_i \rangle| = |\langle y,e_i \rangle|$ for $i=1,\ldots, d$},
\end{align}
where $e_i \in \Z^d$ denotes the $i$-th standard unit vector of $\R^d$.
Condition \eqref{eq:symmetry} guarantees that the percolation measure is invariant under reflections through the hyper-planes $\left\{z \in \R^d: \langle z, e_i \rangle = 0\right\}$. Specifically, \eqref{eq:symmetry} implies that $p_x=p_{-x}$ for all $x\in \Z^d$. In addition, the percolation measure is always translationally invariant by construction.
Furthermore, we assume that $\left(p_{x}\right)_{x\in \Z^d}$ is {\bf irreducible}, meaning that 
\begin{equation}\label{eq:irreduc}
	\p\left(x \leftrightarrow y\right) > 0 \text{ for all } x,y\in \Z^d.
\end{equation}
If the irreducibility does not hold, one can always consider the problem on (possibly lower-dimensional) sublattices. \\

It is easy to see that $\p\left(|K_\mz^n|=\infty\right) = 0$ in dimension $d=1$ for all $n\in \N$, provided that $p_x < 1$ for all $x\in \Z$, so conditions \eqref{eq:non-summable} through \eqref{eq:irreduc} can not imply the existence of an infinite open cluster in dimension $d=1$ after the truncation. Contrary to that, it is conjectured that the truncation property holds, i.e., that $\p\left(|K_\mz^n|=\infty\right) > 0$ for $n$ large enough, in all dimensions $d\geq 2$, cf. \cite{friedli2006truncation}.
In this paper, we verify this conjecture for dimensions $d\geq 3$. 

\subsection{Main results} Our main results are as follows.

\begin{theorem}\label{theo:main}
	Let $d\geq 3$, and assume that $\left(p_x\right)_{x\in \Z^d}$ satisfies conditions \eqref{eq:non-summable} through \eqref{eq:irreduc}. Then there exists $n\in \N$ such that
	\begin{equation}\label{eq:main existence}
		\p \left(|K_\mz^n|=\infty\right) > 0. 
	\end{equation} 
	Furthermore,
	\begin{equation}\label{eq:main high prob}
		\lim_{n\to \infty} \p \left(|K_\mz^n|=\infty\right) = 1.
	\end{equation} 
\end{theorem}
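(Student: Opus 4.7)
\textbf{Reduction of \eqref{eq:main high prob} to \eqref{eq:main existence}.} Assume \eqref{eq:main existence} holds for some $n_0$. By translation invariance and ergodicity the $n_0$-truncated model has an infinite cluster $\mathcal{I}$ of density $\theta_{n_0}>0$. For $n>n_0$, the origin lies in the $n$-truncated infinite cluster as soon as one of its open edges of length in $(n_0,n]$ ends at a point of $\mathcal{I}$, and those edges are independent of $\mathcal{I}$, so
\begin{equation*}
\p\bigl(\mz\notin\text{inf.\ cluster of $n$-trunc.}\mid\mathcal{I}\bigr)\le \exp\Bigl(-\sum_{x\in\mathcal{I},\,n_0<\|x\|\le n} p_x\Bigr).
\end{equation*}
The expected exponent is $\theta_{n_0}\sum_{n_0<\|x\|\le n} p_x\to\infty$ by \eqref{eq:non-summable}, and a second-moment argument (using that correlations in $\mathcal{I}$ decay over long distances) upgrades this to convergence in probability, hence $\p(|K_\mz^n|=\infty)\to 1$.

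\textbf{Multi-scale block renormalization for \eqref{eq:main existence}.} Write $\lambda_n:=\sum_{\|x\|\le n}p_x\to\infty$. Partition $\Z^d$ into cubes of side $L$ and call a cube \emph{good} if it contains an $n$-truncated component of size of order $L^d$ reaching each of its faces. For suitably chosen $L$ and $n=n(L)$, one aims to prove that each cube is good with probability close to $1$ and that two adjacent good cubes are joined by a truncated edge with probability close to $1$; this puts the good cubes in stochastic domination over a supercritical Bernoulli bond percolation on the renormalized lattice, which produces the desired infinite truncated cluster. The inter-cube step is the easier one: between two adjacent cubes the expected number of truncated candidate edges between their large components is of order $L^d\lambda_{\min(n,2L)}$, which can be made arbitrarily large by choosing $n\ge 2L$ and $L$ large.

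\textbf{Main obstacle.} The real difficulty is manufacturing the first good cube, since asserting that a cube contains a large truncated component is itself a local version of the statement we want. The natural remedy is to bootstrap through a sequence of scales $L_1<L_2<\cdots$ with truncation levels $n_1<n_2<\cdots$ chosen so that the $\lambda_{n_k}$ grow rapidly and the edges used at different scales lie in disjoint length classes, hence are independent. At the lowest scale, an exploration/branching-process comparison produces a truncated cluster of size comparable to $\lambda_{n_1}$ with positive probability, exploiting that for $\lambda_{n_1}$ much smaller than the accessible volume the exploration frontier rarely revisits itself. At each subsequent scale, small good blocks are glued into larger good blocks by a fresh annulus of medium-range edges, pushing the probability of goodness up to $1$. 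The role of $d\ge 3$ is crucial here: either through transient random-walk / Green-function estimates controlling frontier overlap during the exploration, or by slicing $\Z^d=\Z^2\times\Z^{d-2}$ into parallel independent slabs on which lower-dimensional constructions can be run and subsequently coupled by long edges. A secondary subtlety is that \eqref{eq:symmetry} and \eqref{eq:non-summable} together only guarantee infinite mass in some direction class, which must be identified before the renormalization cubes are oriented so that the inter-cube step genuinely benefits from non-summability.
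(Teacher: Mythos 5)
\textbf{On the reduction of \eqref{eq:main high prob} to \eqref{eq:main existence}.} Your argument is plausible but takes a longer route than needed, and contains an unclosed gap: you invoke ``a second-moment argument (using that correlations in $\mathcal{I}$ decay over long distances)'' to upgrade the divergence of $\E\bigl[\sum_{x\in\mathcal{I},\,n_0<\|x\|\le n}p_x\bigr]$ to a.s. divergence, but no such correlation decay is proven or cited, and under only \eqref{eq:symmetry}--\eqref{eq:irreduc} it is not clear how you would get it. The paper's route is much shorter and avoids this entirely: by non-summability and Grimmett--Keane--Marstrand, $\mz\leftrightarrow x$ a.s. for all $x$, and by Burton--Keane the $N$-truncated infinite cluster $\cC_\infty^{\leq N}$ is a.s. unique; hence $\mz\leftrightarrow\cC_\infty^{\leq N}$ a.s., the connecting path uses edges of length $\leq K$ for some a.s. finite random $K$, and monotone convergence gives $\p(|K_\mz^n|=\infty)\geq\p(\mz\overset{\leq n}{\longleftrightarrow}\cC_\infty^{\leq N})\to 1$.

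\textbf{On \eqref{eq:main existence}.} This is where the real gap lies. You propose a multi-scale block renormalization, but you never construct the seed: you explicitly identify ``manufacturing the first good cube'' as the main obstacle and then describe, in the conditional (``one aims to prove\ldots'', ``the natural remedy is to bootstrap\ldots''), a strategy that is not carried out. The exploration/branching-process comparison you gesture at does not obviously produce a component of size comparable to $\lambda_{n_1}$ across \emph{all} directions under only the mirror-symmetry \eqref{eq:symmetry}: non-summability may be concentrated in a single coordinate direction, and you yourself flag this (``a secondary subtlety\ldots''), again without resolving it. In other words, the proposal is a research plan, not a proof. The paper sidesteps renormalization entirely: it defines a measure on self-avoiding paths $\gamma$ starting at $\mz$, built from a Benjamini--Pemantle--Peres unpredictable process in the $e_1$-direction interleaved with deterministic-type increments in the transverse directions, and applies a weighted second-moment (Paley--Zygmund) bound, namely $\p(|K_\mz|=\infty)\geq\bigl(\be_{\mu\times\mu}[\ov(\gamma,\varphi)]\bigr)^{-1}$. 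The key estimate is a bootstrapping bound showing the expected weighted overlap of two independent such paths is at most $2$ (Lemma~\ref{lem:Wn def 1}), which rests on the unpredictability profile $\pre_k\leq 100\,k^{-\log 2/\log 3}$ making the intersection probability small; the role of $d\geq 3$ is that one needs at least two transverse coordinates for this to beat the barrier that makes the method fail in $d=2$ (Lemma~\ref{propo:cutsets}). Until you supply a concrete seed estimate and close the multi-scale induction, your argument for \eqref{eq:main existence} does not constitute a proof.
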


Furthermore, we also study the same problem in the {\sl isotropic case}, in which the symmetry-condition \eqref{eq:symmetry} is replaced by the stronger condition \eqref{eq:isotropy}. We say that $T \in \{-1,0,1\}^{d \times d}$ is a signed permutation matrix if each row and each column of $T$ contains exactly one non-zero entry. The alternative condition asks that
\begin{equation}\label{eq:isotropy}
	p_x = p_y \text{ if } x=Ty \text{ for some signed permutation matrix $T$}.
\end{equation}
Note that condition \eqref{eq:isotropy} is stronger than condition \eqref{eq:symmetry}. The difference is that condition \eqref{eq:symmetry} requires that $p_x$ is not affected when flipping the sign of one (or more) entries of $x$, whereas condition \eqref{eq:isotropy} requires that $p_x$ is invariant under changing the sign of coordinates and under interchanging the different entries of $x$. In particular, condition \eqref{eq:isotropy} guarantees that there are no distinguished directions among the $d$ coordinate directions. The main example of condition \eqref{eq:isotropy} is the {\sl isotropic} case, in which $p_x = p_y$ for all $x,y \in \Z^d$ for which $\|x\|_q = \|y\|_q$, where $\|\cdot\|_q$ denotes the usual $q$-norm on $\R^d$. Assuming condition \eqref{eq:isotropy}, we obtain the following results.

\begin{theorem}\label{theo:isotropic}
	Let $d\geq 3$, let $T(d)=10^{26}$ for $d\geq 4$, and let $T(3)=10^{400}$. Let $(p_x)_{x\in \Z^d\setminus \{\mz\}}$ be such that $\infty > \sum_{x \in \Z^d \setminus \{\mz\}} p_x > T(d)$, and such that \eqref{eq:irreduc} and \eqref{eq:isotropy} hold. Then
	\begin{equation}\label{eq:isotrop infinite cluster pos prob}
		\p \left( |K_\mz|=\infty \right) > 1 - \frac{1}{e} > 0
	\end{equation}
	and
	\begin{equation}\label{eq:isotrop finite cluster low prob}
		\p \left( |K_\mz| < \infty \right) \leq \exp\left(1- \frac{\sum_{x \in \Z^d \setminus \{\mz\}} p_x}{T(d)} \right).
	\end{equation}
\end{theorem}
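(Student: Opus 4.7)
The plan is to derive \eqref{eq:isotrop finite cluster low prob} from \eqref{eq:isotrop infinite cluster pos prob} by a self-coupling into independent layers, and to prove \eqref{eq:isotrop infinite cluster pos prob} by a renormalisation / coarse-graining argument quantifying how a large expected degree forces percolation.

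First I would deduce \eqref{eq:isotrop finite cluster low prob} from \eqref{eq:isotrop infinite cluster pos prob}. Set $k := \lfloor \sum_{x \neq \mz} p_x / T(d) \rfloor$ and decompose the edge probabilities into $k$ independent layers via
\begin{equation*}
 1 - p_x \;=\; \prod_{i=1}^{k} \bigl(1 - p_x^{(i)}\bigr),
\end{equation*}
where the $p_x^{(i)}$ are chosen to preserve the isotropy \eqref{eq:isotropy} and irreducibility \eqref{eq:irreduc} and so that $\sum_{x} p_x^{(i)} \ge T(d)$ in each layer (this is possible because $\sum_x p_x \ge k \, T(d)$; one can e.g. take $p_x^{(i)}=1-(1-p_x)^{1/k}$ so that the layers are identically distributed, then verify $\sum_x p_x^{(i)} \gtrsim \sum_x p_x / k$). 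Sampling the layers independently realises the original percolation configuration, and the cluster of $\mz$ in the full model contains the cluster of $\mz$ in each layer. Hence $\{|K_\mz|<\infty\}$ implies that $\mz$ is in a finite cluster in every layer, an event of probability at most $(1/e)^k$ by \eqref{eq:isotrop infinite cluster pos prob} and independence. Using $k \ge \sum_x p_x/T(d) - 1$ yields the stated exponential bound.

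For \eqref{eq:isotrop infinite cluster pos prob}, I would run a renormalisation at a scale $L=L(d)$ adapted to the dimension. Partition $\Z^d$ into boxes $B_v = Lv + [0,L)^d$, and call $B_v$ \emph{good} if, inside $B_v$, the subgraph restricted to edges of length $\le L$ contains a ``giant'' cluster $C_v$ of size at least $c\,L^d$. Using the large expected degree together with the isotropy \eqref{eq:isotropy} (which guarantees that the short-range mass is comparable in every direction), a first/second moment computation — or equivalently a comparison with a supercritical branching process thinned to the box — gives that $\p(B_v \text{ good})$ is close to $1$ once $\sum_x p_x$ exceeds $T(d)$. Next I would use the long-range edges to connect adjacent giant clusters: for two neighbouring good boxes, the expected number of open edges between $C_v$ and $C_{v'}$ is at least a constant multiple of $c^2 L^{2d} \cdot \bar p$, where $\bar p$ is the typical probability of an edge of length $\Theta(L)$; isotropy plus non-summability make this expectation large, and independence of disjoint edge sets converts it into a high connection probability. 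Standard stochastic domination then realises the good-and-connected coarse process as a site percolation on $\Z^d$ with density close to $1$, which supports an infinite cluster in every dimension $d\ge 2$; the constant $1-1/e$ arises by arranging all the estimates so that $\p(B_\mz \text{ good and joined to the infinite coarse cluster}) \ge 1-1/e$.

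The main obstacle is quantitative and specific to $d=3$. In $d\ge 4$ one has the luxury of performing the renormalisation inside a single slab where the combinatorics of paths and connections is comparatively cheap, which is consistent with the uniform value $T(d)=10^{26}$. In $d=3$ there is no such slab, so the renormalisation must be run in the full three-dimensional lattice and several of the tail and Peierls-type estimates pay a much heavier price; the explicit value $T(3)=10^{400}$ should be the compound of these losses through the coarse-graining, together with the tightness required to push the connection probability high enough that the renormalised site percolation percolates with probability exceeding $1-1/e$. Beyond these constants the remaining steps are routine: verifying that the choice of decomposition in the first paragraph preserves \eqref{eq:isotropy} and \eqref{eq:irreduc}, and checking the elementary inequality $(1/e)^{\lfloor s \rfloor}\le \exp(1-s)$ to match the exact form of \eqref{eq:isotrop finite cluster low prob}.
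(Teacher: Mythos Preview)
Your reduction of \eqref{eq:isotrop finite cluster low prob} to \eqref{eq:isotrop infinite cluster pos prob} via independent layers is essentially the paper's argument (the paper uses $p_x^{(i)}=p_x/N$ and stochastic domination of $\max_i\omega_i$ by $\omega$, but your variant with $1-(1-p_x)^{1/k}$ works just as well).

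The gap is in your argument for \eqref{eq:isotrop infinite cluster pos prob}. The hypothesis is that $\sum_x p_x$ is \emph{finite} and exceeds $T(d)$, with no control whatsoever on how the mass is distributed across scales; in particular you cannot appeal to ``non-summability'' as you do when connecting adjacent giant clusters. There is no natural box scale $L$: the mass could be supported on a single shell $\|x\|_\infty=R$ with $R$ arbitrary, so either your boxes contain no internal edges ($L<R$) or the ``typical'' edge of length $\Theta(L)$ does not exist ($L\gg R$). More fundamentally, your claim that a box is ``good'' with high probability---i.e.\ that it contains a giant component---is itself a percolation statement of the same strength as what you are trying to prove; the branching-process comparison only gives that the cluster has positive probability of being large, not that a macroscopic cluster exists with probability close to $1$. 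So the renormalisation is circular at its key step.

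The paper takes a completely different route: a second-moment (Paley--Zygmund) argument applied to a carefully constructed probability measure $\mu$ on self-avoiding paths starting at $\mz$. One shows that two independent $\mu$-samples $\gamma,\varphi$ satisfy $\be_{\mu\times\mu}[\ov(\gamma,\varphi)]\le\tfrac{3}{2}$, where the weighted overlap is $\prod_{e\in\gamma\cap\varphi}\p(e\text{ open})^{-1}$; this immediately gives $\p(|K_\mz|=\infty)\ge\tfrac{2}{3}>1-\tfrac{1}{e}$. For $d\ge4$ the measure $\mu$ is a random walk whose increments are drawn from four sets $A_1,\ldots,A_4$ designed so that the path is self-avoiding and the hitting probabilities decay like $k^{-3/2}$; this is where Lemma~\ref{lem1} enters. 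For $d=3$ a random walk is not enough (its predictability profile decays only like $k^{-1/2}$), and the construction instead uses the \emph{unpredictable paths} of Benjamini--Pemantle--Peres (Section~\ref{sec:unpredictable}) to boost the decay to $k^{-\log 2/\log 3}$ in one coordinate; the much larger constant $T(3)=10^{400}$ comes from the resulting weaker but still summable intersection estimates in Claim~\ref{claim:hitting d=3}, not from any slab or Peierls argument.
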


\noindent
Although the enormously large constants of $10^{400}$ in dimension $d=3$ and $10^{26}$ in dimensions $d\geq 4$ in Theorem \ref{theo:isotropic} are not relevant for any practical use, it is interesting that the constants do not depend on the dimension $d$ and are uniform over all probability measures $\p$ defined by $(p_x)_{x\in \Z^d\setminus\{\mz\}}$. Further significant reduction of this constant seems feasible using more precise estimates throughout the proof; we do not pursue this in this paper. It is also an interesting problem how small a constant $C$ can be such that the condition $\sum_{x\in \Z^d \setminus \{\mz\}}p_x > C$ guarantees $\p(|K_\mz|=\infty)>0$ in the isotropic case. More generally, it is conjectured by Easo and Hutchcroft \cite[Conjecture 7.3]{easo2023critical} that there exists a constant $C<\infty$ such that for all transitive graphs that are not one-dimensional, the condition $\E\left[\deg(x)\right]>C$ guarantees the existence of an infinite open cluster, where $x$ is a vertex in the graph. Special cases of this conjecture for spread-out percolation have been solved by Penrose \cite{penrose1993spread} and Spanos and Tointon \cite{spanos2024spread}. Generally, one expects the optimal constant $C$ to be much smaller than $10^{26}$. For example, for the standard integer lattice in dimensions $d \in \{2,\ldots,9\}$ it is known that the expected degree at criticality $(2 d p_c(d))$ is at most $2.4$ \cite{gomes2021upper}. 

Also, note that the probability $\p \left( |K_\mz| < \infty \right)$ in \eqref{eq:isotrop finite cluster low prob} does in general not decay sub-exponentially in the expected degree $\E\left[\deg(\mz)\right]=\sum_{x \in \Z^d \setminus \{\mz\}} p_x$, as
\begin{equation*}
	\p(|K_\mz|< \infty) \geq \p(|K_\mz|=1) = \prod_{x \in \Z^d \setminus \{\mz\}} (1-p_x) \geq \exp\Big( - c \sum_{x \in \Z^d \setminus \{\mz\}} p_x \Big)  ,
\end{equation*}
for some constant $c>0$, assuming that $\sup_x p_x \leq \tfrac{1}{2}$. So the exponential decay observed in \eqref{eq:isotrop finite cluster low prob} describes the correct asymptotic behavior of $\p(|K_\mz|< \infty)$ in $\sum_{x \in \Z^d \setminus \{\mz\}} p_x$, up to a very large constant in the exponent.\\

Theorem \ref{theo:isotropic} directly implies that if $\sum_{x \in \Z^d \setminus \{\mz\}} p_x = \infty$ and if conditions \eqref{eq:irreduc} and \eqref{eq:isotropy} hold, then there exists $n \in \N$ such that $\p \left(|K_\mz^n|=\infty\right) > 0$. While Theorem \ref{theo:isotropic} requires the stronger assumption \eqref{eq:isotropy}, compared to the assumptions of mirror-symmetry \eqref{eq:symmetry} in Theorem \ref{theo:main}, it also has stronger results, which are quantitative statements. These stronger assumptions are also necessary,  as the following discussion shows.

\begin{remark}
	The results of Theorem \ref{theo:isotropic} are not true assuming only the conditions of Theorem \ref{theo:main} $($\eqref{eq:symmetry} instead of \eqref{eq:isotropy}$)$.
\end{remark}

\begin{proof}
	To construct a counterexample, let $N\in \N$ and consider the probabilities $(p_x)_{x\in \Z^d\setminus \{\mz\}}$ defined by
	\begin{align*}
		p_x^\eps = \begin{cases}
			0.5 & \text{ if } x=k\cdot e_1 \text{ with $k\in \{-N,\ldots,N\} \setminus \{\mz\}$} \\
			\eps & \text{ if } x = \pm e_i \text{ with $i\in \{2,\ldots,d\}$}   \\
			0 & \text{ otherwise}
		\end{cases},
	\end{align*}
	where $\eps \in (0,1)$ is a parameter. Write $\p_\eps$ for the associated measure and $\E_\eps$ for the expectation under this measure.
	From the construction, it directly follows that $\E_\eps\left[\deg(\mz)\right] = \sum_{x \in \Z^d \setminus \{\mz\}} p_x = N + 2(d-1)\eps > N$ and that \eqref{eq:symmetry} and \eqref{eq:irreduc} hold for $\eps >0$. However, we will argue below that for sufficiently small $\eps = \eps(N) >0$ we have $\p_\eps \left(|K_\mz|=\infty\right)=0$. As $N \in \N$ was arbitrary this shows that the result of Theorem \ref{theo:isotropic} does not hold, assuming only \eqref{eq:symmetry} and \eqref{eq:irreduc} instead of \eqref{eq:irreduc} and \eqref{eq:isotropy}. For $\eps=0$, the percolation reduces to finite-range percolation on one-dimensional fibers of the form $\{x+k\cdot e_1: k\in \Z\}$, so in particular we have $\E_0 \left[|K_\mz|\right] < \infty$. Furthermore, we can write the expectation of $|K_\mz|$ as
	\begin{equation*}
		\E_0 \left[|K_\mz|\right] = \sum_{n\in \Z} \p_0(\mz\leftrightarrow n \cdot e_1) < \infty,
	\end{equation*}
	which implies that there exists $K\in \N_{>0}$ such that
	\begin{equation*}
		\sum_{n \in \{-K,\ldots, K\}} \sum_{k \in \Z\setminus \{-K,\ldots,K\}} \p_0(\mz\leftrightarrow n \cdot e_1) \p_0(n \cdot e_1 \sim k \cdot e_1) < 1 .
	\end{equation*}
	So when we define the set $S\subset \Z^d$ by $S=\{-K\cdot e_1,\ldots,K\cdot e_1\}$ one directly gets that
	\begin{equation*}
		\varphi_0(S) \coloneqq \sum_{x \in S} \sum_{y \in \Z^d \setminus S} \p_0 (\mz \overset{S}{\longleftrightarrow} x) \p_0 ( x \sim y) < 1,
	\end{equation*}
	where we write $\mz \overset{S}{\longleftrightarrow} x$ if there exists an open path contained in $S$ connecting $\mz$ and $x$. 
	For a fixed finite set $S \subset \Z^d$ and $x\in S, y\notin S$, the functions $\eps \mapsto \p_{\eps} (\mz \overset{S}{\longleftrightarrow} x)$ and $\eps \mapsto \p_{\eps} ( x \sim y)$ are continuous functions in $\eps$, so in particular,
	\begin{equation*}
		\varphi_{\eps}(S) \coloneqq \sum_{x \in S} \sum_{y \notin S} \p_{\eps} (\mz \overset{S}{\longleftrightarrow} x) \p_{\eps} ( x \sim y) < 1,
	\end{equation*}
	for $\eps > 0$ small enough. This directly implies that $\p_\eps \left( |K_\mz|=\infty \right)=0$, by the proof of sharpness of the phase transition by Duminil-Copin and Tassion \cite{duminil2016new, duminil2017new}.
\end{proof}

As there are no infinite clusters in one-dimensional finite-range percolation, the results of Theorems \ref{theo:main} and \ref{theo:isotropic} do not hold in dimension $d=1$. This means that dimension $d=2$ is the only case in which the truncation question as formulated above remains open. We conjecture that the results of Theorems \ref{theo:main} and \ref{theo:isotropic} also hold in dimension $d=2$.

\subsection{Related work} The non-summability in \eqref{eq:non-summable} directly ensures that each point $x\in \Z^d$ is contained in an infinite open cluster almost surely.
Whenever the percolation measure is irreducible, then there exists exactly one infinite cluster almost surely. So for each $x,y \in \Z^d$ there exists an open path from $x$ to $y$, as proven by Grimmett, Keane, and Marstrand \cite{grimmett1984connectedness} and by Kalikow and Weiss \cite{kalikow1988random}.\\
Various works have addressed the truncation problem in the non-summable case under different assumptions on the connection probabilities $(p_x)_{x\in \Z^d}$ or on the underlying graph \cite{alves2017note,campos2020truncation,van2016truncated,friedli2004longrange,friedli2006truncation,de2008truncated,menshikov2001note,sidoravicius1999truncated}. In dimension $d=2$, the truncation question was studied in \cite{sidoravicius1999truncated,de2008truncated,menshikov2001note,campos2020truncation} under further regularity conditions on the probabilities $(p_x)_{x\in \Z^d}$. Whenever $p_x$ decays polynomially with $\|x\|$ or when $\limsup_{x\to \infty} p_x>0$, the truncation question was studied in \cite{friedli2004longrange,friedli2006truncation}, where it has an affirmative answer. For the case in which $p_x=f(\|x\|_\infty)$, Friedli and de Lima showed that the truncation question has an affirmative answer in dimensions $d\geq 3$ when $\sum_{n=1}^{\infty} f(n)=\infty$ \cite{friedli2006truncation}.
In this notation, Theorem \ref{theo:main} shows that the truncation question has an affirmative answer in dimensions $d\geq 3$, whenever $\sum_{n=1}^{\infty} f(n) n^{d-1}  = \infty$. 
Analogous questions on oriented graphs were treated in \cite{alves2017note,van2016truncated}.

The truncation problem has also been studied in the summable situation. Here the question is slightly different and asks whether one can remove all sufficiently long edges of a {\sl super-critical} long-range percolation cluster while still retaining an infinite open cluster. This question was first studied by Meester and Steif \cite{meester1996continuity} who showed this for long-range percolation with exponential decay, following earlier work on finite-range percolation by Grimmett and Marstrand \cite{grimmett1990supercritical}. Later, the exponential decay was relaxed to polynomial (but summable) decay in \cite{berger2002transience,baumler2023continuity}. Similar results were also obtained for Poisson Boolean percolation by Dembin and Tassion \cite{dembin2022almost} and for inhomogeneous long-range percolation by Mönch \cite{monch2023inhomogeneous}. The uniqueness of the infinite cluster, whenever it exists, was studied by Burton and Keane \cite{burton1989density}.

\paragraph*{Outline of the paper} 
The main idea in the proofs of Theorems \ref{theo:main} and \ref{theo:isotropic} is to apply the second moment method to a quantity related to the number of open paths starting at the origin. This technique was already used for oriented short-range percolation by Cox and Durrett in dimensions $d\geq 4$ \cite{cox1983oriented} and by Benjamini, Pemantle, and Peres in dimension $d=3$ \cite{benjamini1998unpredictable}. Similar techniques were also used by Kesten in high dimensions \cite{kesten1990asymptotics}. A second-moment bound for this quantity related to the number of open paths of a certain length will then imply that for all $n\in \N$ there exists a path of length $n$ that starts at the origin with a uniform (in $n$) positive probability. This implies the existence of an infinite cluster. The main input and setup of the second-moment bound are described in Proposition \ref{prop:measure on paths}. Then, we apply Proposition \ref{prop:measure on paths} to the different situations considered in Theorems \ref{theo:main} and \ref{theo:isotropic}. In order to apply Proposition \ref{prop:measure on paths} to the different setups considered in this paper, we choose specific measures on directed random walks in $d$ dimensions, which are adapted to long-range percolation. Furthermore, we also prove upper bounds on the probability that two independent paths sampled by this measure intersect. \\

For the proofs of Theorem \ref{theo:isotropic} in dimensions $d\geq 4$, we need some inequalities for sums of independent random variables. We prove these results for sums of independent random variables in Section \ref{sec:randvar}. 
Subsequently, we adapt the notion of unpredictable paths of \cite{benjamini1998unpredictable} to long-range random paths in Section \ref{sec:unpredictable} and use it to prove Theorem \ref{theo:main} and Theorem \ref{theo:isotropic} in dimension $d=3$ in Section \ref{sec:proofmain}, respectively, Section \ref{sec:proof isot}. A major input for the proof of Theorem \ref{theo:isotropic} in dimension $d=3$ is Proposition \ref{prop:unpredictable}, which shows that the future behavior of certain random walks with arbitrary distribution of the jump-size is hard to predict.\\

In Section \ref{sec:d=2}, we provide a short argument as to why the technique of Proposition \ref{prop:measure on paths} cannot work in dimension $d=2$. In Section \ref{sec:potts}, we describe and prove analogous statements to the ones of Theorem \ref{theo:main} and Theorem \ref{theo:isotropic} for the $q$-states Potts model.

\subsection{Open problems}

The results presented in this paper give rise to two natural open problems. The first one is, whether the results of Theorem \ref{theo:main} are also true in dimension $d=2$.

\begin{conjecture}
	Let $d = 2$, and assume that $\left(p_x\right)_{x\in \Z^d}$ satisfies conditions \eqref{eq:non-summable} through \eqref{eq:irreduc}. Then there exists $n\in \N$ such that
	\begin{equation*}
		\p \left(|K_\mz^n|=\infty\right) > 0. 
	\end{equation*} 
\end{conjecture}

As mentioned earlier, this conjecture fails in dimension $d=1$, so that dimension $d=2$ is the only dimension in which this question remains open. The second open problem is to show Theorem \ref{theo:isotropic} in dimensions $d\geq 2$ with a significantly reduced constant. For example, it would be interesting to know if the following conjecture is true.

\begin{conjecture}
	Let $d\geq 2$, and let $(p_x)_{x\in \Z^d\setminus \{\mz\}}$ be such that $\sum_{x \in \Z^d \setminus \{\mz\}} p_x > 10$, and such that \eqref{eq:irreduc} and \eqref{eq:isotropy} hold. Then
	\begin{equation*}
		\p \left( |K_\mz|=\infty \right) > 0.
	\end{equation*}
\end{conjecture}

\section{Dispersion of random paths}

In this section, we study the question of how much mass the $n$-th step of a random walk $(S_n)_{n\in \N_0}$ can have on a single point. In other words, we establish upper bounds for $\p(S_n=x)$ uniformly over $x\in \Z^d$ and for different types of random walks $(S_n)_{n\in \N_0}$. We start with the case where $S_n$ is the sum of independent random variables in Section \ref{sec:randvar}. The results shown in this section, in particularly Lemma \ref{lem1}, are a main input for the proof of Theorem \ref{theo:isotropic} in dimensions $d\geq 4$. Contrary to that, the proof of Theorem \ref{theo:isotropic} in dimension $d = 3$ needs different types of random walks, which are considered in Section \ref{sec:unpredictable}. The main new input that is needed to prove Theorem \ref{theo:isotropic} in dimension $d=3$ is Proposition \ref{prop:unpredictable}.

\subsection{Sums of random variables}\label{sec:randvar}

\begin{lemma}\label{lem1}
	Let $a_1,\ldots,a_n \in \N_{>0}$, and let $Y_1,\ldots,Y_n$ be independent and identically distributed random variables with $\p(Y_k=1) = \p(Y_k=-1) = \tfrac{1}{2}$. Then, for all $x\in \Z$,
	\begin{equation}\label{eq:sqrt n bound d=1}
		\p\left( \sum_{k=1}^{n} a_k Y_k = x \right) \leq \frac{1}{\sqrt{n}} .
	\end{equation}
	Furthermore, for $a_1,\ldots,a_n \in \N_0$ with $\max_i a_i >0$ one has
	\begin{align}\label{eq:one quarter bound}
		&\p\left( \sum_{k=1}^{n} a_k Y_k > 0 \right) \geq \frac{1}{4},\\
		& \label{eq:one quarter bound1} \p\left( \sum_{k=1}^{n} a_k Y_k \neq 0 \right) \geq \frac{1}{2}, \text{ and } \\
		& \label{eq:one quarter bound2} \p\left( \sum_{k=1}^{n} a_k Y_k \geq 0 \right) \geq \frac{1}{2} .
	\end{align}
\end{lemma}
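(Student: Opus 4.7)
My plan is to split the lemma into two parts: the anti-concentration bound \eqref{eq:sqrt n bound d=1}, and the three symmetry bounds \eqref{eq:one quarter bound}, \eqref{eq:one quarter bound1}, \eqref{eq:one quarter bound2}, which will all follow from a single elementary conditioning step together with the symmetry of $\sum_{k=1}^n a_k Y_k$ about $0$.

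For \eqref{eq:sqrt n bound d=1} I would invoke the Erd\H{o}s--Littlewood--Offord inequality, which I would derive on the spot via Sperner's antichain theorem. Fix $x\in \Z$, and set
\begin{equation*}
\cA \coloneqq \Big\{A\subseteq \{1,\ldots,n\} \colon \sum_{k\in A} a_k - \sum_{k\notin A} a_k = x\Big\}.
\end{equation*}
If $A,B\in \cA$ with $A\subsetneq B$, subtracting the two defining equations yields $\sum_{k\in B\setminus A} 2a_k = 0$, which is impossible since every $a_k\geq 1$. Hence $\cA$ is an antichain in $2^{\{1,\ldots,n\}}$, so Sperner's theorem gives $|\cA|\leq \binom{n}{\lfloor n/2\rfloor}$. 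Because each of the $2^n$ sign patterns of $(Y_k)_{k=1}^n$ has probability $2^{-n}$, this delivers
\begin{equation*}
\p\Big(\sum_{k=1}^n a_k Y_k = x\Big) \;=\; \frac{|\cA|}{2^n} \;\leq\; \frac{1}{2^n}\binom{n}{\lfloor n/2\rfloor} \;\leq\; \frac{1}{\sqrt{n}},
\end{equation*}
where the last inequality is a standard Stirling-type bound on the central binomial coefficient (checkable directly for small $n$ and via $\binom{2m}{m}\leq 4^m/\sqrt{\pi m}$ for large $n$, with the odd case handled by $\binom{2m+1}{m} = \tfrac{1}{2}\binom{2m+2}{m+1}$).

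For \eqref{eq:one quarter bound1} I would choose any index $j$ with $a_j>0$ and condition on $(Y_k)_{k\neq j}$. Under this conditioning $\sum_{k=1}^n a_k Y_k = a_j Y_j + C$ for some constant $C$, and since $a_j>0$ the two possible values $C\pm a_j$ are distinct, so at most one of them equals $0$. Thus $\p\big(\sum_k a_k Y_k = 0 \,\big|\, (Y_k)_{k\neq j}\big)\leq \tfrac{1}{2}$ almost surely, and taking expectations gives \eqref{eq:one quarter bound1}. The distribution of $\sum_k a_k Y_k$ is symmetric about $0$ (since $Y_k$ and $-Y_k$ have the same law), so $\p(\sum >0) = \p(\sum <0) = \tfrac{1}{2}(1-\p(\sum = 0))$; combined with \eqref{eq:one quarter bound1} this yields \eqref{eq:one quarter bound}, and \eqref{eq:one quarter bound2} follows immediately from $\p(\sum \geq 0) = \tfrac{1}{2} + \tfrac{1}{2}\p(\sum = 0)$.

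The only conceptual step is the Sperner/antichain input for \eqref{eq:sqrt n bound d=1}; once it is established, the $1/\sqrt{n}$ factor is a routine binomial estimate, and the three symmetry bounds collapse into a one-line conditioning argument. I do not anticipate a genuine obstacle beyond checking the central binomial bound for small $n$ by hand.
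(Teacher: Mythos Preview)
Your proposal is correct. The three symmetry bounds \eqref{eq:one quarter bound}--\eqref{eq:one quarter bound2} are handled essentially the same way in both your argument and the paper's: isolate one index $j$ with $a_j>0$, use that flipping the sign of $Y_j$ changes the sum by $\pm 2a_j\neq 0$, and then invoke the symmetry of the law of $\sum_k a_k Y_k$ about $0$. The paper phrases this as ``at least one of the four quantities $\pm a_1 Y_1 \pm \sum_{k\geq 2} a_k Y_k$ is positive, and all four are equidistributed'', which is your conditioning step unpacked.

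The genuine difference is in \eqref{eq:sqrt n bound d=1}. The paper proceeds Fourier-analytically: it writes the point probability via the inversion formula as $\frac{1}{2\pi}\int_0^{2\pi}\prod_k|\cos(a_k t)|\,\md t$, applies H\"older to replace the product by a single $|\cos(a_k t)|^n$, and then uses the $2\pi$-periodicity of $\cos(a_k\,\cdot\,)$ for $a_k\in\N_{>0}$ to reduce to $I_n=\frac{1}{\pi}\int_{-\pi/2}^{\pi/2}\cos(t)^n\,\md t$, which it evaluates as $2^{-n}\binom{n}{n/2}$ for even $n$ via the recursion $I_n=\frac{n-1}{n}I_{n-2}$. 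Your route is the combinatorial Erd\H{o}s--Littlewood--Offord argument: the sign patterns hitting a fixed value form a Sperner antichain, hence have cardinality at most $\binom{n}{\lfloor n/2\rfloor}$. Both approaches land on exactly the same central-binomial bound and finish with the same Stirling estimate, so neither is strictly shorter. Your argument has the mild advantage of only using $a_k>0$ (any positive reals would do), whereas the paper's periodicity step genuinely needs $a_k\in\N_{>0}$; on the other hand, the Fourier computation is self-contained and does not import Sperner's theorem.
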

\begin{proof}
	We start with the proof of \eqref{eq:sqrt n bound d=1}.
	For $k \in \{1,\ldots,n\}$, let $\phi_k(t)=\E\left[e^{i t a_k Y_k}\right] = \cos\left( a_k t \right)$ be the characteristic function of $a_k Y_k$, and let $\phi(t) = \prod_{k=1}^{n} \phi_k(t)$ be the characteristic function of $Y \coloneqq \sum_{k=1}^{n} a_k Y_k$. By the inversion theorem for the characteristic function \cite[Theorem 15.10]{klenke2013probability} we get for all $x\in \Z$ that
	\begin{align*}
		\p(Y=x) & = \frac{1}{2\pi} \int_{0}^{2\pi} e^{-itx} \phi(t) \md t 
		= \left| \frac{1}{2\pi} \int_{0}^{2\pi} e^{-itx} \prod_{k=1}^{n} \phi_k(t) \md t \right|
		\leq  \frac{1}{2\pi} \int_{0}^{2\pi} \prod_{k=1}^{n} \left| \phi_k(t)\right| \md t \\
		&
		\leq \frac{1}{2\pi} \prod_{k=1}^{n} \left(\int_{0}^{2\pi} \left| \phi_k(t)\right|^n \md t \right)^{1/n}
		=
		\frac{1}{2\pi} \prod_{k=1}^{n} \left(\int_{0}^{2\pi} \left| \cos(a_k t)\right|^n \md t \right)^{1/n},
	\end{align*}
	where we used Hölder's inequality for the last inequality. By the $2\pi$-periodicity of the cosine, the integrals of the form $\int_{0}^{2\pi} \left| \cos(a_k t)\right|^n \md t$ are the same for all $a_k \in \N_{>0}$. Thus we get that
	\begin{equation}\label{eq:indefinition}
		\p(Y=x) \leq 	\frac{1}{2\pi} \prod_{k=1}^{n} \left(\int_{0}^{2\pi} \left| \cos(a_k t)\right|^n \md t \right)^{1/n}
		= \frac{1}{2\pi} \int_{0}^{2\pi} \left| \cos( t)\right|^n \md t = \frac{1}{\pi} \int_{-\pi/2}^{\pi/2}  \cos( t)^n \md t \eqqcolon I_n .
	\end{equation}
	Using integration by parts, we see that for $n\geq 2$
	\begin{align*}
		I_n & = \frac{1}{\pi} \int_{-\pi/2}^{\pi/2} \cos(t)^{n-1} \cos(t) \md t
		= \frac{n-1}{\pi} \int_{-\pi/2}^{\pi/2} \cos(t)^{n-2} \sin(t)^2 \md t\\
		&
		= \frac{n-1}{\pi} \int_{-\pi/2}^{\pi/2} \cos(t)^{n-2} (1-\cos(t)^2) \md t
		= (n-1) I_{n-2} - (n-1) I_n
	\end{align*}
	and thus we see that the inductive relation $I_n = \frac{n-1}{n} I_{n-2}$ holds for $n\geq 2$. For $n\geq 2$ even, this implies by induction on $n$ even that $I_n = 2^{-n} \binom{n}{n/2}$. We will now show that $2^{-n} \binom{n}{n/2} \leq \tfrac{0.87}{\sqrt{n}}$ for $n$ even. For this, we use a version of Stirling's formula due to Robbins \cite{robbins1955remark}, which directly implies that
	\begin{align}\label{eq:stirling}
		\sqrt{2\pi k} \left(\frac{k}{e}\right)^k < k! < \sqrt{2\pi k} \left(\frac{k}{e}\right)^k e^{1/12} \text{ for all $k \in \N_{>0}$.}
	\end{align}
	Using this formula, we get for $n\geq 2$ even that
	\begin{align*}
		I_n & = 2^{-n} \binom{n}{n/2} = 2^{-n} \frac{n!}{(n/2)!(n/2)!} 
		\leq 2^{-n} \frac{\sqrt{2\pi n} \left(\frac{n}{e}\right)^n e^{1/12}}{\left(\sqrt{\pi n} \left(\frac{n/2}{e}\right)^{n/2}\right)^2}\\
		& = \frac{1}{\sqrt{n}} \sqrt{\frac{2}{\pi}} e^{1/12} \leq \frac{0.87}{\sqrt{n}}.
	\end{align*}
	For $n$ odd with $n\geq 5$ we have that
	\begin{equation*}
		I_n \leq I_{n-1} \leq \frac{0.87}{\sqrt{n-1}} \leq \frac{0.87}{\sqrt{0.8n}} \leq \frac{1}{\sqrt{n}}
	\end{equation*}
	and for $n\in \{1,3\}$ one also checks that $I_n \leq \frac{1}{\sqrt{n}}$. Thus we showed that $I_n \leq \frac{1}{\sqrt{n}}$ for all $n\in \N_{>0}$ and inserting this into \eqref{eq:indefinition} finishes the proof.\\
	
	For the proof of \eqref{eq:one quarter bound}, we can assume without loss of generality that $a_1>0$. 
	By symmetry, we get that
	\begin{align}\label{eq:4 things}
		\notag &\p\left( a_1 Y_1 + \sum_{k=2}^{n} a_k Y_k > 0 \right) 
		=
		\p\left( a_1 Y_1 - \sum_{k=2}^{n} a_k Y_k > 0 \right) \\
		= &
		\p\left( -a_1 Y_1 + \sum_{k=2}^{n} a_k Y_k > 0 \right) 
		=
		\p\left( -a_1 Y_1 - \sum_{k=2}^{n} a_k Y_k > 0 \right) .
	\end{align}
	As
	$a_1\neq 0$, at least one of the four expressions of the form $ \pm a_1 Y_1 \pm \sum_{k=2}^{n} a_k Y_k$ needs to be positive. Thus the sum of all probabilities in \eqref{eq:4 things} needs to be at least 1. As all probabilities in \eqref{eq:4 things} are equal, this implies that $\p\left( a_1 Y_1 + \sum_{k=2}^{n} a_k Y_k > 0 \right) \geq \frac{1}{4}$, which shows \eqref{eq:one quarter bound}. Inequality \eqref{eq:one quarter bound1} follows from \eqref{eq:one quarter bound} and \eqref{eq:4 things}, as
	\begin{equation*}
		\p\left( \sum_{k=1}^{n} a_k Y_k \neq 0 \right) = \p\left( \sum_{k=1}^{n} a_k Y_k > 0 \right)
		+
		\p\left( \sum_{k=1}^{n} a_k Y_k < 0 \right) \geq \frac{2}{4}.
	\end{equation*}
	Inequality \eqref{eq:one quarter bound2} holds by the symmetry around $0$ of the random variable $\sum_{k=1}^{n} a_k Y_k$.
\end{proof}

\subsection{Unpredictable paths}\label{sec:unpredictable}

So far, we only discussed sums of random variables. Fur sequences of random variables $S=(S_n)_{n \in \N_0}$, which are not the sums of independent random variables, one way to measure the dependence between $S_n$ for different values of $n\in \N_0$ is the so-called {\sl predictability profile} \cite{benjamini1998unpredictable}. Informally, the predictability profile quantifies how well $S_{n+k}$ can be predicted, upon observing $S_0,\ldots,S_n$. Formally, for any sequence of discrete random variables $S=(S_n)_{n\in \N_0}$, we define the predictability profile $(\pre_k(S))_{k\in \N_0}$ by
\begin{equation}\label{eq:predictability profile}
	\pre_k(S)=\sup \p\left(S_{n+k}=x | S_0,\ldots,S_n\right),
\end{equation}
where the supremum is over all $x\in \Z^d, n\in \N_0$, and all histories $S_0,\ldots,S_n$ with positive probability. When $S_n = \sum_{i=1}^n X_i$ with i.i.d. $\mathbb{Z}$-valued random variables $X_1,\ldots$ that are non-degenerate, then the predictability profile satisfies $\pre_k(S) = \mathcal{O}(k^{-\tfrac{1}{2}})$. Furthermore, $k^{-\tfrac{1}{2}}$ is the correct asymptotic when the random variables have a finite variance.
For the proof of Theorems \ref{theo:main} and \ref{theo:isotropic} in dimension $d=3$, we need stochastic processes whose predictability profile decays strictly faster than $k^{-\tfrac{1}{2}}$. 
So in this section, we are interested in stochastic processes $(S_n)_{n\in \N_0}$ which are \textbf{not} the sum of independent random variables and have a predictability profile that decays strictly faster than $k^{-\tfrac{1}{2}}$. This question, and its application to percolation was considered by Benjamini, Pemantle, and Peres \cite{benjamini1998unpredictable}. After that, there were also improvements on the existence of sequences of random variables with a given predictability profile by Häggström and Mossel \cite{haggstrom1998nearest} and Hoffmann \cite{hoffman1998unpredictable}. We quickly review the construction by Benjamini, Pemantle, and Peres \cite[Section 4]{benjamini1998unpredictable}. The results in \cite{benjamini1998unpredictable} encompass much more than what is presented in the following. In the notation of \cite{benjamini1998unpredictable}, we only consider the special case where $b=3, \ell = 2, r=1$.\\

\noindent
Let $\T_3$ be the infinite rooted ternary tree, i.e., the rooted tree where each vertex has exactly three children. We also assume that the children of each vertex are ordered, i.e., that there is a first, second, and third child. We say that the root is at level $0$ and write $|\text{root}|=0$. If $v \in \T_3$ is a vertex with child $w$, we define $|w|=|v|+1$ as the level of $w$. By construction, there are exactly $3^N$ many vertices at level $N$. 
For each vertex $v\in \T_3$, we now attach a label $\sigma(v)\in \{-1,+1\}$ (also called {\sl spin}) to this vertex as follows. We do this inductively over all levels. Start with $\sigma(\text{root})=+1$. For any vertex $\sigma \in \T_3$ with children $w_1,w_2,w_3$, assign the first two children $w_1,w_2$ of $\sigma$ the same spin as $v$, i.e., $\sigma(v)=\sigma(w_1)=\sigma(w_2)$. For the third child $w_3$ of $\sigma$, the spin $\sigma(w_3)$ is either $+1$ or $-1$ with equal probability, independent of $\sigma(v)$ and all other spins that have been assigned so far. Using this construction, one can inductively assign spins to all levels. Let $Z_N^+$ be the number of vertices at level $N$ with spin $+1$, and let $Z_N^-$ be the number of vertices at level $N$ with spin $-1$. For $Y_N$, the total sum of spins at level $N$, defined by 
\begin{equation*}
	Y_N = \sum_{v \in \T_3 : |v|=N} \sigma(v) = Z_N^+ - Z_N^-,
\end{equation*}
it is proven in \cite[Lemma 4.1]{benjamini1998unpredictable} that
\begin{equation}\label{eq:benjamini1}
	\sup_{x\in \R} \p(Y_N=x) \leq C 2^{-N}
\end{equation}
with
\begin{align*}
	C &= 4 \left(\frac{\pi}{2}+ \sum_{k=1}^{N} \frac{2^k \pi}{2} \cos(\tfrac{\pi}{4})^{3^{k-1}} \right)
	\leq 2 \left(\pi+  2 \pi \sum_{k=1}^{\infty} 2^{k-1}  \cos(\tfrac{\pi}{4})^{3^{k-1}} \right) \leq 2 \left(\pi+  4 \pi  \right) \leq 32,
\end{align*}
where the second to last inequality can be checked using the rapid decay of $\cos(\tfrac{\pi}{4})^{3^{k-1}}$. (The true value of the sum $\sum_{k=1}^{\infty} 2^{k-1}  \cos(\tfrac{\pi}{4})^{3^{k-1}}$ is approximately $1.59\ldots$). See \cite[Equation (21)]{benjamini1998unpredictable} for the definition of the constant $C$.\\

Let $v_1,\ldots,v_{3^N}$ be the vertices in the tree at level $N$, in lexicographic ordering. Summing the spins of these vertices at level $N$ and defining $S_m = \sum_{i=1}^{m} \sigma(v_i)$ one gets a finite sequence of random variables. The distribution of $S_m$ does not depend on the choice of $N$, so using Kolmogorov's consistency theorem, one can extend this finite sequence to an infinite sequence $S=(S_m)_{m\in \N_0}$. The infinite sequence satisfies
\begin{equation}\label{eq:predict 100 old}
	\pre_k(S) \leq 6^{\frac{\log(2)}{\log(3)}} 32 k^{-\frac{\log(2)}{\log(3)}} 
	\leq 100 k^{-\frac{\log(2)}{\log(3)}},
\end{equation}
see \cite[Equation (23)]{benjamini1998unpredictable}. The important thing to notice here is that $\frac{\log(2)}{\log(3)} > \frac{1}{2}$, so the predictability profile of $S$ decays much faster than that of a simple random walk. Throughout this paper, we use the convention that $\log$ is the logarithm with base $e$. Furthermore, the process $S=(S_m)_{m\in \N_0}$ satisfies $|S_{m+1}-S_m|=1$ for all $m\in \N_0$, so the step size of the process is $1$. \\

Next, we use the results of \cite{benjamini1998unpredictable} to construct stochastic processes with more general increments.
In Proposition \ref{prop:unpredictable}, we construct a stochastic process $\tilde{S}=(\tilde{S}_m)_{m\in \N_0}$ with a rapidly decaying predictability profile, where the absolute values of the increments are not identical. This proposition will be useful to prove Theorem \ref{theo:isotropic} in dimension $d=3$. It allows us to adapt the theory of unpredictable paths to paths with a more general step-size distribution.
Before going to the statements, we introduce some notation. We write $\left[a_1,\ldots,a_n\right]$ for the \textbf{multiset} consisting of $a_1,\ldots,a_n$. Recall that a collection of random variables $(X_n)_{n\in \left\{1,\ldots,N\right\}}$ is \textbf{exchangeable} if for every permutation $\eta : \{1,\ldots,N\} \to \{1,\ldots,N\}$ the sequence of random variables $(X_1,\ldots,X_N)$ has the same distribution as $(X_{\eta(1)},\ldots, X_{\eta(N)})$.

\begin{proposition}\label{prop:unpredictable}
	Let $S=(S_m)_{m\in \N_0}$ be a process satisfying \eqref{eq:predict 100 old} such that $S_0=0$ and $|S_{m+1}-S_m| = 1$ for $m\in \N_0$. Write $S_m=\sum_{i=1}^{m} \sigma_i$. Let $a_{1},\ldots,a_{N+k}>0$, and let $X_{1},\ldots,X_{N+k}$ be exchangeable positive random variables that are furthermore independent of $(S_m)_{m\in \N_0}$ and satisfy $\left[X_{1},\ldots,X_{N+k}\right]=\left[a_{1},\ldots,a_{N+k}\right]$. Then the random variables $(\tilde{S}_{m})_{m\in \{0,\ldots,N+k\}}$ defined by
	\begin{equation}\label{eq:tilde S defini}
		\tilde{S}_{m} = \sum_{i=1}^{m} \sigma_i X_i  \ \ \text{ for } \ m=0,\ldots,N+k
	\end{equation}
	satisfy
	\begin{align}
		&\label{eq:predict 100 new} 
		\sup_{z\in \R} \p\left(\tilde{S}_{N+k}=z| \tilde{S}_0,\ldots,\tilde{S}_N\right)
		\leq 100 k^{-\frac{\log(2)}{\log(3)}} .
	\end{align}
\end{proposition}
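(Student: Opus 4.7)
The plan is to first condition on $\tilde{S}_0,\ldots,\tilde{S}_N$. Since each increment $\tilde{S}_i - \tilde{S}_{i-1} = \sigma_i X_i$ reveals both its sign $\sigma_i$ and, because $X_i > 0$, its absolute value $X_i$, this conditioning is equivalent to conditioning on $(\sigma_1,\ldots,\sigma_N)$ and $(X_1,\ldots,X_N)$. After this conditioning, the continuation $(\sigma_{N+1},\ldots,\sigma_{N+k})$ has the conditional law inherited from $S$, while by exchangeability of $(X_1,\ldots,X_{N+k})$ the remaining $(X_{N+1},\ldots,X_{N+k})$ is a uniformly random ordering of the residual multiset $M' := [a_1,\ldots,a_{N+k}] \setminus [X_1,\ldots,X_N]$ (of cardinality $k$), and these two pieces of randomness are independent of each other. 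The problem therefore reduces to bounding
\[
\p\Bigl(\sum_{i=1}^{k}\sigma_{N+i} X_{N+i} = w\Bigr) \leq 100\, k^{-\log(2)/\log(3)}
\]
uniformly in $w \in \R$, using only the predictability hypothesis \eqref{eq:predict 100 old} and the uniform-ordering property of the $X$'s.

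Next I would decompose according to the sign pattern. Setting $P = \{i \in \{1,\ldots,k\} : \sigma_{N+i} = +1\}$ and using $\sum_{i=1}^{k}\sigma_{N+i} X_{N+i} = 2\sum_{i \in P} X_{N+i} - \sum M'$, the event in question becomes $\{\sum_{i \in P} X_{N+i} = u\}$ with $u := (w + \sum M')/2$. Conditional on $|P|=p$, the set $P$ is a function of $\sigma$ alone and is therefore independent of $X$; combined with the uniform ordering of $(X_{N+1},\ldots,X_{N+k})$, this shows that $\sum_{i \in P} X_{N+i}$ is distributed as $T_p$, the sum of a uniformly chosen $p$-element sub(multi)set of $M'$. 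Moreover $|P|=p$ is equivalent to $S_{N+k} - S_N = 2p - k$, so \eqref{eq:predict 100 old} gives $\p(|P|=p \mid \sigma_1,\ldots,\sigma_N) \leq 100\, k^{-\log(2)/\log(3)}$. Putting these pieces together,
\[
\p\Bigl(\sum_{i=1}^{k}\sigma_{N+i} X_{N+i} = w\Bigr) = \sum_{p=0}^{k} \p(|P|=p)\,\p(T_p = u) \leq 100\, k^{-\log(2)/\log(3)} \sum_{p=0}^{k}\p(T_p = u).
\]

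The remaining and, in my view, main step is then the combinatorial inequality $\sum_{p=0}^{k} \p(T_p = u) \leq 1$, uniform in $u \in \R$ and in the multiset $M'$ of positive reals of size $k$. To prove this I would couple all subset sizes simultaneously: let $\pi$ be a uniformly random permutation of $\{1,\ldots,k\}$ and set $A_p = \{\pi(1),\ldots,\pi(p)\}$. Then each $A_p$ is a uniformly random $p$-subset of $\{1,\ldots,k\}$, so $\sum_{j \in A_p}(M')_j \stackrel{d}{=} T_p$. Along the chain $\emptyset = A_0 \subsetneq A_1 \subsetneq \cdots \subsetneq A_k$ the partial sums strictly increase in $p$ because every entry of $M'$ is positive; hence for each realization of $\pi$ and each $u$, at most one index $p$ can satisfy $\sum_{j \in A_p}(M')_j = u$. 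Taking expectation over $\pi$ yields $\sum_{p=0}^{k}\p(T_p = u) = \E\bigl|\{p : \sum_{j \in A_p}(M')_j = u\}\bigr| \leq 1$, and substituting this into the previous display proves \eqref{eq:predict 100 new}.
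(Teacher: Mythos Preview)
Your proof is correct and follows essentially the same route as the paper. The only organizational difference is that the paper, after the same reduction, conditions further on the entire realization of $(X_{N+1},\ldots,X_{N+k})$ and observes directly that the map $M\mapsto \sum_{i=N+1}^{N+M}X_i-\sum_{i=N+M+1}^{N+k}X_i$ is strictly increasing (by positivity of the $X_i$), so at most one value of $Z^+_{N,N+k}$ can hit the target; your permutation coupling to prove $\sum_{p}\p(T_p=u)\le 1$ is exactly this monotonicity argument in integrated form.
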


\begin{proof}
	For $m,j\in \N_0$, let $Z_{m,m+j}^+$ be the number of spins $\sigma_i$ with $\sigma_i=+1$ and $i\in \{m+1,\ldots,m+j\}$. Analogously, let $Z_{m,m+j}^-$ be the number of spins $\sigma_i$ with $\sigma_i=-1$ and $i\in \{m+1,\ldots,m+j\}$. So in particular we have that 
	\begin{equation}\label{eq:Z difference equation}
		Z_{m,m+j}^+ +  Z_{m,m+j}^- = j \text{ and } S_{m+j} = S_m + Z_{m,m+j}^+ - Z_{m,m+j}^- 
		=
		S_m + 2 Z_{m,m+j}^+ - j .
	\end{equation}
	Conditioned on $\tilde{S}_0,\ldots,\tilde{S}_N$, one can directly deduce $X_1,\ldots,X_N$, so that 
	\begin{equation*}
		\left[X_{N+1},\ldots,X_{N+k}\right] = \left[a_{1},\ldots,a_{N+k}\right] \setminus
		\left[X_{1},\ldots,X_{N}\right] \eqqcolon \left[b_{N+1},\ldots,b_{N+k}\right]
	\end{equation*}
	As $X_{1},\ldots,X_{N+k}$ were assumed to be exchangeable, this directly implies that conditioned on $X_{1},\ldots,X_{N}$, the random variables $X_{N+1},\ldots,X_{N+k}$ are still exchangeable.
	Using that the collection of random variables $X_{N+1},\ldots,X_{N+k}$ is exchangeable, we see that for any $\tau_{N+1},\ldots,\tau_{N+k} \in \{-1,+1\}$ the distribution of $\sum_{i=N+1}^{N+k} \tau_i X_i$ just depends on $|\left\{i \in \{N+1,\ldots, N+k\} : \tau_i=+1\right\}|$, but not on the order of $\tau_{N+1},\ldots,\tau_{N+k}$. As the collection $X_{N+1},\ldots,X_{N+k}$ is furthermore
	independent of $\sigma_{N+1},\ldots, \sigma_{N+k}$, this implies that
	\begin{align*}
		& \p\left(\tilde{S}_{N+k}= z | \tilde{S}_0,\ldots, \tilde{S}_N\right)
		=
		\p\left(\sum_{i=N+1}^{N+k} \sigma_i X_i = z -\tilde{S}_N \big| \tilde{S}_0,\ldots, \tilde{S}_N\right)\\
		&
		=
		\p\Bigg(\sum_{i=N+1}^{N+Z_{N,N+k}^+} X_i - \sum_{i=N+Z_{N,N+k}^+ + 1}^{N+k} X_i = z -\tilde{S}_N \ \big| \ \tilde{S}_0,\ldots, \tilde{S}_N\Bigg)
	\end{align*}
	for all $z\in \R$.
	Conditioning not just on $\tilde{S}_0,\ldots,\tilde{S}_N$, but also on $X_{N+1},\ldots,X_{N+k}$, we see that
	\begin{align}
		& \notag \sup_{z, \tilde{S}_0,\ldots, \tilde{S}_N} \p\left(\tilde{S}_{N+k}= z | \tilde{S}_0,\ldots, \tilde{S}_N\right) \\
		& \notag
		= 
		\sup_{z, \tilde{S}_0,\ldots, \tilde{S}_N}
		\p\Bigg(\sum_{i=N+1}^{N+Z_{N,N+k}^+} X_i - \sum_{i=N+Z_{N,N+k}^+ + 1}^{N+k} X_i = z  \ \big| \ \tilde{S}_0,\ldots, \tilde{S}_N\Bigg)\\
		& \label{eq:sup einsetzen}
		\leq
		\sup_{\substack{z, \tilde{S}_0,\ldots, \tilde{S}_N, \\ X_{N+1},\ldots, X_{N+k}}}
		\p\Bigg(\sum_{i=N+1}^{N+Z_{N,N+k}^+} X_i - \sum_{i=N+Z_{N,N+k}^+ + 1}^{N+k} X_i = z  \ \big| \ \tilde{S}_0,\ldots, \tilde{S}_N, X_{N+1},\ldots,X_{N+k}\Bigg)
	\end{align}
	where the last suprema are over all $z\in \R$, histories $\tilde{S}_0,\ldots,\tilde{S}_N$, and - in the last line - all realizations of $X_{N+1},\ldots,X_{N+k}$ with  $\left[X_{N+1},\ldots,X_{N+k}\right] = \left[b_{N+1},\ldots,b_{N+k}\right]$ consisting of positive numbers. For each fixed $z \in \R$, history $\tilde{S}_0,\ldots,\tilde{S}_N$, and $X_{N+1},\ldots, X_{N+k}>0$, there is at most one value $M = M(z,X_{N+1},\ldots,X_{N+k}) \in \{0,\ldots,k\}$ such that
	\begin{equation}\label{eq:N defini}
		\sum_{i=N+1}^{N+M} X_i - \sum_{i=N+ M + 1}^{N+k} X_i = z.
	\end{equation}
	This holds as $X_{N+1},\ldots, X_{N+k}>0$. Let $M= M(z,X_{N+1},\ldots,X_{N+k}) \in \{0,\ldots,k\}$ be the unique value satisfying \eqref{eq:N defini} if it exists, and let $M=-1$ if there does not exist $M \in \{0,\ldots,k\}$ satisfying \eqref{eq:N defini}. Thus we see that
	\begin{align*}
		& \p\Bigg(\sum_{i=N+1}^{N+Z_{N,N+k}^+} X_i - \sum_{i=N+Z_{N,N+k}^+ + 1}^{N+k} X_i = z  \ \big| \ \tilde{S}_0,\ldots, \tilde{S}_N, X_{N+1},\ldots,X_{N+k}\Bigg)\\
		&=
		\p\left( Z_{N,N+k}^+ = M(z,X_{N+1},\ldots,X_{N+k})  \ \big| \ \tilde{S}_0,\ldots, \tilde{S}_N, X_{N+1},\ldots,X_{N+k}\right)\\
		&
		\overset{\eqref{eq:Z difference equation}}{=}
		\p\left( S_{N+k} = S_N + 2M(z,X_{N+1},\ldots,X_{N+k}) - k  \ \big| \ \tilde{S}_0,\ldots, \tilde{S}_N, X_{N+1},\ldots,X_{N+k}\right)\\
		&
		= \p\left( S_{N+k} = S_N + 2M(z,X_{N+1},\ldots,X_{N+k}) - k  \ \big| \ S_0,\ldots, S_N, X_{N+1},\ldots,X_{N+k}\right)\\
		&
		\leq \pre_k(S) \overset{\eqref{eq:predict 100 old}}{\leq} 100 k^{-\frac{\log(2)}{\log(3)}}.
	\end{align*}
	In the last two lines, we used that $S_{N+k}-S_N$ is independent of $X_1,\ldots,X_{N+k}$. As the calculation above holds for all $z\in \R$, histories $\tilde{S}_0,\ldots,\tilde{S}_N$, and $X_{N+1},\ldots,X_{N+k}>0$, we can insert this inequality into \eqref{eq:sup einsetzen} and get that
	\begin{align*}
		\sup_{z, \tilde{S}_0,\ldots, \tilde{S}_N} \p\left(\tilde{S}_{N+k}= z | \tilde{S}_0,\ldots, \tilde{S}_N\right) \leq 100 k^{-\frac{\log(2)}{\log(3)}}.
	\end{align*}
\end{proof}

\section{Proofs of the Theorems}\label{sec:proofs}

In this section, we prove Theorems \ref{theo:main} and \ref{theo:isotropic}. We start with the proofs of the second statements (Equations \eqref{eq:main high prob} and \eqref{eq:isotrop finite cluster low prob}) of these theorems in Section \ref{sec:3.1}. In Section \ref{sec:setup}, we describe the general setup of the proofs of Theorems \ref{theo:main} and \ref{theo:isotropic}. We then proceed to prove Theorem \ref{theo:main} in Section \ref{sec:proofmain} and Theorem \ref{theo:isotropic} in Sections \ref{sec:proof isot} and \ref{sec:boots}. Finally, in Section \ref{sec:d=2}, we show that the setup outlined in Section \ref{sec:setup} can not work in dimension $d=2$.

\subsection{Proof of \eqref{eq:main high prob} and \eqref{eq:isotrop finite cluster low prob}}\label{sec:3.1}

In this section, we prove that the percolation density converges to $1$ when the truncation length $n$ is taken to $+\infty$ in Theorem \ref{theo:main}, respectively when the expected degree $\E\left[\deg(\mz)\right]$ diverges to $+\infty$ in Theorem \ref{theo:isotropic}.
We start with the proof of \eqref{eq:main high prob}, assuming \eqref{eq:main existence}. That is, we need to show that
\begin{equation*}
	\lim_{n \to \infty} \p \left( |K_\mz^n|=\infty \right) = 1,
\end{equation*}
assuming that $\p \left( |K_\mz^N|=\infty \right)>0$ for some $N$ large enough.

\begin{proof}[Proof of \eqref{eq:main high prob} assuming \eqref{eq:main existence}]
	Let $N$ be large enough so that $\p \left( |K_\mz^N|=\infty \right)>0$. Write $\cC_\infty^{\leq N} = \left\{x \in \Z^d : x \overset{\leq N}{\longleftrightarrow} \infty \right\}$ for the infinite cluster in the environment where we truncated all edges of size larger than $N$. Note that this infinite cluster is almost surely unique, which follows from the amenability of $\Z^d$ and the uniqueness of the infinite cluster as proven by Burton and Keane \cite{burton1989density}. Furthermore, one has that $\mz \leftrightarrow x$ for all $x\in \Z^d$ almost surely, which was proven by Grimmett, Keane and Marstrand  \cite[Theorem 3]{grimmett1984connectedness}. Thus we directly get that $\mz \leftrightarrow \cC_\infty^{\leq N}$ almost surely. In particular, there exists a random, but almost surely finite, $K \in \N$ such that $\mz \overset{\leq K}{\longleftrightarrow} \cC_\infty^{\leq N}$. As the events of the form $\left\{ \mz \overset{\leq k}{\longleftrightarrow} \cC_\infty^{\leq N} \right\}$ are increasing in $k$, we get that for $n \geq N$
	\begin{equation*}
		\lim_{n\to \infty}\p \left( |K_\mz^n|=\infty \right) \geq \lim_{k \to \infty} \p \left( \mz \overset{\leq k}{\longleftrightarrow} \cC_\infty^{\leq N}  \right)
		=
		\p \left( \mz \leftrightarrow \cC_\infty^{\leq N}  \right) = 1,
	\end{equation*}
	which shows \eqref{eq:main high prob}.
\end{proof}

Next, we turn to the proof of \eqref{eq:isotrop finite cluster low prob}. For this, we first need to introduce some notation. We identify the percolation environment with an element $\omega \in \{0,1\}^E$, where we set $\omega(e)=1$ if the edge $e$ is open and $\omega(e)=0$ if the edge is closed. We write $K_\mz(\omega)$ for the open cluster containing the origin in the percolation configuration defined by $\omega$.

\begin{proof}[Proof of \eqref{eq:isotrop finite cluster low prob} assuming \eqref{eq:isotrop infinite cluster pos prob}]
	Let $\omega \in \{0,1\}^E$ be distributed according to a product measure with $\p\left(\omega(\{x,y\})=1\right)=p_{x-y}$. Assume that $\sum_{x\in \Z^d\setminus \{\mz\}}p_x > N T(d)$ for some $N\in \N_{>0}$. Let $\omega_1,\ldots,\omega_N\in \{0,1\}^E$ be independent and identically distributed percolation environments such that the components $(\omega_i(e))_{e\in E, i=1,\ldots,N}$ are independent, and such that $\p\left(\omega_i(\{x,y\})=1\right)=\frac{p_{x-y}}{N}$. The collection of probabilities $\left(\tfrac{p_x}{N}\right)_{x\in \Z^d \setminus \{\mz\}}$ satisfies all the assumptions of Theorem \ref{theo:isotropic}.
	So in particular, by \eqref{eq:isotrop infinite cluster pos prob}, we have that
	\begin{equation*}
		\p \left(|K_\mz(\omega_i)|< \infty \right) \leq \frac{1}{e} \text{ for $i=1,\ldots,N$.}
	\end{equation*}
	Next, we compare the environment $\omega$ to the environment $\max_i \omega_i \in \{0,1\}^E$ defined by
	\begin{equation*}
		(\max_i \omega_i)(e) \coloneqq \max_i \omega_i(e).
	\end{equation*}
	The claim is that $\omega$ stochastically dominates $ \max_i \omega_i$. As the different components of $\omega$, respectively $\max_i\omega_i$ are independent, it suffices to compare the marginals of the two random variables. By a union bound, we have that
	\begin{equation*}
		\p\left(\max_i\omega_i(\{x,y\}) = 1\right) \leq \sum_{i=1}^{N} \p\left(\omega_i(\{x,y\}) = 1\right) =p_{x-y} = \p \left(\omega(\{x,y\})=1\right)
	\end{equation*}
	for all $\{x,y\}\in E$,
	which implies that $\omega$ dominates $\max_i \omega_i$. This directly implies that
	\begin{equation*}
		\p \left(|K_\mz(\omega)|< \infty \right) 
		\leq 
		\p \left(|K_\mz(\max_i \omega_i)|< \infty \right).
	\end{equation*}
	If $x \in K_{\mz}(\omega_j)$ for some $j\in \{1,\ldots,N\}$, then also $x \in K_{\mz}(\max_i\omega_i)$, as $\omega_j \leq \max_i\omega_i$. As $j \in \{1,\ldots,N\}$ was arbitrary, this implies that $K_\mz(\max_i \omega_i) \supseteq \bigcup_i K_\mz( \omega_i)$, from which we further get that
	\begin{align*}
		&\p \left(|K_\mz(\omega)|< \infty \right) 
		\leq 
		\p \left(|K_\mz(\max_i \omega_i)|< \infty \right)
		\leq 
		\p \left(|K_\mz(\omega_i)|< \infty \text{ for $i=1,\ldots,N$} \right) 
		\\
		&
		=
		\p \left(|K_\mz(\omega_1)|< \infty \right)^N \leq e^{-N}.
	\end{align*}
	Thus we see that the condition $\sum_{x\in \Z^d\setminus \{\mz\}}p_x > N T(d)$ for some $N\in \N_{0}$ implies that $\p \left(|K_\mz|< \infty \right) \leq \exp(-N)$, which proves \eqref{eq:isotrop finite cluster low prob}, as
	\begin{align*}
		\p \left(|K_\mz(\omega)|< \infty \right) \leq \exp \left(-\Bigg\lceil \frac{\sum_{x\in \Z^d\setminus \{\mz\}}p_x}{T(d)} -1 \Bigg\rceil\right)
		\leq \exp \left(1- \frac{\sum_{x\in \Z^d\setminus \{\mz\}}p_x}{T(d)} \right).
	\end{align*}
\end{proof}

\subsection{The general setup}\label{sec:setup}

In this section, we describe a general setup that we use for the proofs of Theorem \ref{theo:main} and Theorem \ref{theo:isotropic}. Before going to the statements, we introduce some notation. Throughout this paper, we say that a path is a map $\gamma: I \to \Z^d$, where $I = \left[a,b\right]\cap \Z$ for some $a,b \in \R \cup \{-\infty, +\infty\}$. For a path $\gamma : I \to \Z^d$ and $x\in \Z^d$, we define the path $x+\gamma$ by $(x+\gamma)(k) = x + \gamma(k)$ for all $k\in I$. We say that the path $\gamma$ is open if the edges of the form $\{\gamma_k, \gamma_{k+1}\}$ are open for all $k\in I$ for which $\gamma_k \neq \gamma_{k+1}$ and $k+1\in I$. 
For a path $\gamma=(\gamma_k)_{k\in I}$ and $j, n \in I$ with $j\leq n$, we write $\gamma_j^n=(\gamma_j,\ldots,\gamma_n)$ for this segment of the path; If $j>n$, we define $\gamma_j^n=\emptyset$.

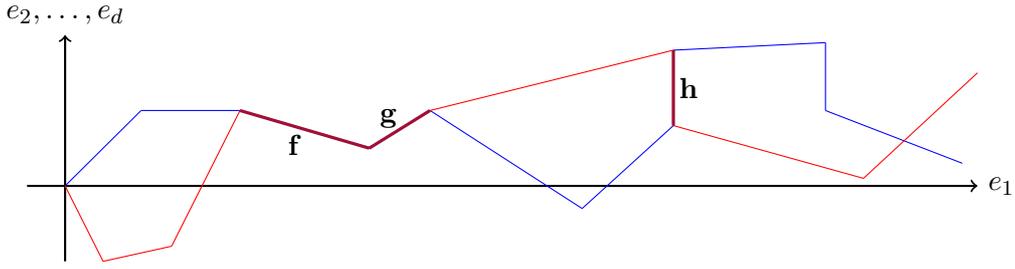
\begin{figure}[h]		
	\begin{center}
		\begin{tikzpicture}
			\draw[->, thick] (-0.5,0) -- (12,0) node[right] {$e_1$};
			\draw[->, thick] (0,-1) -- (0,2) node[above] {$e_2,\ldots,e_d$};
			
			\path[blue] 
			(0,0) edge (1,1) 
			(1,1) edge (2.3,1) 
			(2.3,1) edge (4,0.5) 
			(4,0.5) edge (4.8,1)
			(4.8,1) edge (6.8,-0.3)
			(6.8,-0.3) edge (8,0.8)
			(8,0.8) edge (8,1.8)
			(8,1.8) edge (10,1.9)
			(10,1.9) edge (10,1)
			(10,1) edge (11.8,0.3);
			
			\path[ red] 
			(0,0) edge (0.5,-1) 
			(0.5,-1) edge (1.4,-0.8) 
			(1.4,-0.8) edge (2.3,1) 
			(2.3,1) edge (4,0.5) 
			(4,0.5) edge (4.8, 1)
			(4.8,1) edge (8, 1.8)
			(8,1.8) edge (8,0.8)
			(8,0.8) edge (10.5,0.1)
			(10.5,0.1) edge (12,1.5);
			
			\path[very thick, truepurp] 
			(2.3,1) edge (4,0.5) 
			(4,0.5) edge (4.8, 1)
			(8,0.8) edge (8,1.8);
			
			\vertex[ draw=none ] at (3,0.55) {\bf f};
			\vertex[ draw=none ] at (4.25,0.9) {\bf g};
			\vertex[ draw=none ] at (8.2,1.3) {\bf h};
			
		\end{tikzpicture} 
		
		\vspace{3mm}
		
		\parbox{13cm}{\caption{Two self-avoiding paths starting at the origin. The edges which are traversed by both paths (\textbf{f}, \textbf{g}, and \textbf{h}) are drawn in purple. Both paths traverse the edges \textbf{f} and \textbf{g} in the same direction. Contrary to that, the paths traverse the edge \textbf{h} in opposite directions.}	\label{fig:paths}}
		
	\end{center}
\end{figure}

We write $\cP^n$ for the set of  self-avoiding paths of length $n$ ($n\in \N_0 \cup \{+\infty\}$) in $\Z^d$ starting at the origin $\mz$. We consider probability measures on $\cP^\infty$. For two such probability measures $\tilde{\mu},\mu$, we write $\bp_{\tilde{\mu}\times \mu}$ for their product measure and $\be_{\tilde{\mu}\times \mu}$ for the expectation with respect to the product measure. We will often consider two random paths $\left(\left(\gamma_k\right)_{k\in \N_0},\left(\varphi_k\right)_{k\in \N_0}\right)$ sampled from the measure $\bp_{\tilde{\mu}\times \mu}$, with the convention that $\left(\gamma_k\right)_{k\in \N_0}$ has distribution $\tilde{\mu}$ and $\left(\varphi_k\right)_{k\in \N_0}$ has distribution $\mu$.

For two paths $\gamma=(\gamma_k)_{k\in I}$, $\varphi=(\varphi_n)_{n\in J}$, and an edge $e=\{x,y\}$ ($x,y \in \Z^d, x \neq y$), we say that $e \in \gamma \cap \varphi$ if there exist $k \in I, n\in J$ such that $k+1 \in I, n+1 \in J$, and $\{x,y\}=\{\varphi_n,\varphi_{n+1}\}=\{\gamma_k,\gamma_{k+1}\}$. See Figure \ref{fig:paths} for an illustration. Note that we do not require that the two paths traverse the edge in the same direction or that the paths are self-avoiding. If there does not exist $e=\{x,y\}$ with $x,y \in \Z^d, x\neq y$, and $e\in \gamma \cap \varphi$, we define that $\gamma \cap \varphi = \emptyset$; note that this holds even if there exist $x\in \Z^d, k\in I, n\in J$ with $x=\gamma_k=\varphi_n$. If different edges can be open or closed, as is the case for percolation, for two (finite or infinite) paths $\gamma$ and $\varphi$, we define their (weighted) \textbf{overlap} by
\begin{equation*}
	\ov(\gamma,\varphi) \coloneqq \prod_{e\in \gamma \cap \varphi } \p \left(e \text{ open}\right)^{-1}.
\end{equation*}
If $\gamma \cap \varphi=\emptyset$, the overlap of the two paths is defined by $\ov(\gamma,\varphi)=1$. If $\p(e \text{ open})=0$ for some $e\in \gamma \cap \varphi$, then we define $\ov(\gamma,\varphi)=\infty$. However, this will only occur with probability $0$ in our construction. Also, note that since all factors $\p(e \text{ open})^{-1}$ are at least 1, the weighted overlap is well-defined in $\R_{\geq 0} \cup \{+\infty\}$, even if $|\gamma\cap \varphi|=\infty$ and the product is an infinite product.
Further, the overlap depends only on the distribution of the percolation configuration, but not on the actual realization of the percolation configuration. If one of the paths, say $\gamma$, is not self-avoiding and traverses an edge $e \in \gamma \cap \varphi$ twice, note that we do not count the edge twice in the overlap.\\

The next proposition is the essential step that makes the connection between measures on paths and the existence of infinite open percolation clusters. It follows from the Cauchy-Schwarz (respectively Paley-Zygmund) inequality. Similar versions of this proposition for short-range percolation were also used in \cite{cox1983oriented,benjamini1998unpredictable}. For the models of short-range percolation considered in \cite{cox1983oriented, benjamini1998unpredictable}, it suffices to consider the moment-generating function of $|\gamma \cap \varphi|$, as all edges are open with the same probability. Contrary to that, for long-range percolation, different edges $e \in \gamma\cap \varphi$ contribute a different multiplicative factor to the weighted overlap defined above.

\begin{proposition}\label{prop:measure on paths}
	Let $\mu$ be a probability measure on self-avoiding paths in $\Z^d$. Assume that two random paths $\gamma=(\gamma_k)_{k\in \N_0}$ and $\varphi=(\varphi_k)_{k\in \N_0}$ are independent and distributed according to $\mu$. Assume that edges $e=\{x,y\}$ of $\Z^d$ are either open or closed and that all edges are independent of each other.
	Then
	\begin{equation*}
		\p \left( |K_\mz|=\infty \right) \geq\left( \be_{\mu \times \mu} \left[ \ov(\gamma,\varphi) \right]\right)^{-1}.
	\end{equation*}
\end{proposition}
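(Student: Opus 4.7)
The plan is a second-moment argument in the spirit of \cite{cox1983oriented,benjamini1998unpredictable}. For each $n \in \N_0$, I would let $\mu^n$ denote the marginal of $\mu$ on the first $n$ steps $\gamma_0^n$ and define the random variable
\begin{equation*}
	X_n \;=\; \sum_{\gamma} \mu^n(\gamma)\,\frac{\mathbbm{1}[\gamma \text{ open}]}{\p(\gamma \text{ open})},
\end{equation*}
where the sum ranges over self-avoiding paths of length $n$ starting at $\mz$, with the convention $0/0 = 0$. Then $X_n \geq 0$ and $\E[X_n] = \sum_\gamma \mu^n(\gamma) = 1$. For the second moment, independence of edges makes the event $\{\gamma \text{ open}, \varphi \text{ open}\}$ factor as $\prod_{e \in \gamma \cup \varphi} \p(e \text{ open})$, and since the self-avoiding paths $\gamma$ and $\varphi$ each traverse each of their own edges exactly once, dividing by $\p(\gamma\text{ open})\p(\varphi\text{ open})$ leaves precisely the product over $\gamma \cap \varphi$, giving
\begin{equation*}
	\E[X_n^2] \;=\; \sum_{\gamma,\varphi}\mu^n(\gamma)\mu^n(\varphi)\prod_{e\in \gamma\cap \varphi} \p(e\text{ open})^{-1}
	\;=\; \be_{\mu^n\times\mu^n}\!\left[\ov(\gamma,\varphi)\right].
\end{equation*}

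Next, I would invoke the Paley-Zygmund (second moment) inequality in the form $\p(X_n > 0) \geq (\E[X_n])^2/\E[X_n^2]$, which immediately yields
\begin{equation*}
	\p(X_n > 0) \;\geq\; \frac{1}{\be_{\mu^n\times\mu^n}[\ov(\gamma,\varphi)]}.
\end{equation*}
The event $\{X_n > 0\}$ forces the existence of an open self-avoiding path of length $n$ from the origin, and hence in particular $|K_\mz| \geq n+1$. Writing $A_n$ for the event that such an open self-avoiding path of length $n$ exists, the sequence $(A_n)_{n \in \N_0}$ is decreasing in $n$ and $\bigcap_n A_n \subseteq \{|K_\mz| = \infty\}$. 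Consequently,
\begin{equation*}
	\p(|K_\mz| = \infty) \;\geq\; \lim_{n \to \infty} \p(A_n) \;\geq\; \limsup_{n \to \infty} \p(X_n > 0) \;\geq\; \limsup_{n \to \infty} \frac{1}{\be_{\mu^n\times\mu^n}[\ov(\gamma,\varphi)]}.
\end{equation*}

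To finish, I would pass from the truncated to the full paths via monotone convergence. Viewing $\gamma, \varphi$ as $\mu$-distributed infinite paths, the quantity $\ov(\gamma_0^n, \varphi_0^n)$ is non-decreasing in $n$, since extending the paths can only add new shared edges, each contributing a factor $\geq 1$. It converges pointwise to $\ov(\gamma, \varphi)$, so by monotone convergence $\be_{\mu^n \times \mu^n}[\ov] \uparrow \be_{\mu\times\mu}[\ov]$, and combining this with the previous display gives the claimed bound. The proof is essentially routine once the right $X_n$ is written down; the only points requiring care are the verification that the ratio $\p(\gamma, \varphi \text{ open})/(\p(\gamma\text{ open})\p(\varphi\text{ open}))$ really collapses to $\ov(\gamma, \varphi)$ (which uses both independence of distinct edges and self-avoidance to count each edge only once), and the handling of paths with $p_e = 0$, which I would absorb by restricting $\mu$ to paths of positive probability or by the $0/0 = 0$ convention.
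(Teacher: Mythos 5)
Your proof is correct and is essentially the same as the paper's. Both use the second-moment/Paley-Zygmund argument applied to the $\mu$-weighted, probability-normalized count of open prefixes of length $n$, identify the second moment with the expected weighted overlap over the first $n$ steps, and then pass to the infinite-path overlap by monotone convergence; the only cosmetic differences are that the paper writes the counting variable as an integral over $\cP^\infty$ and takes an infimum rather than a limsup of decreasing probabilities.
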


\begin{proof}
	Consider the random variable $Z_n$ defined by
	\begin{align*}
		Z_n & = \int_{\cP^\infty}  \frac{\mathbbm{1}\{\gamma_0^n \text{ open}\}}{\p(\gamma_0^n \text{ open})} \md \mu (\gamma) .
	\end{align*}
	Taking expectations and applying Fubini gives that $\E\left[Z_n\right]=1$, as $\mu$ is a probability measure. For the square of $Z_n$, we have that
	\begin{align*}
		Z_n^2 & = \int_{\cP^\infty} \int_{\cP^\infty}  \frac{\mathbbm{1}\{\gamma_0^n \text{ open}\}}{\p(\gamma_0^n \text{ open})}
		\frac{\mathbbm{1}\{\varphi_0^n \text{ open}\}}{\p(\varphi_0^n \text{ open})}
		\md \mu (\gamma) \md \mu(\varphi) .
	\end{align*}
	The events $\{\gamma_0^n \text{ open}\}$ and $\{\varphi_0^n \text{ open}\}$ are not independent, as the two paths might have joint edges. Compensating for the edges that are in both paths, we compute the second moment of $Z_n$ to be
	\begin{align}\label{eq:second moment}
		\notag \E\left[Z_n^2\right] & = \int_{\cP^\infty} \int_{\cP^\infty}  \frac{\p(\gamma_0^n \text{ open}, \varphi_0^n \text{ open})}{\p(\gamma_0^n \text{ open}) \p(\varphi_0^n \text{ open})}
		\md \mu (\gamma) \md \mu(\varphi) \\
		& 
		= \int_{\cP^\infty} \int_{\cP^\infty} \prod_{e\in \gamma_0^n \cap \varphi_0^n} \p(e \text{ open})^{-1} \md \mu (\gamma) \md \mu(\varphi) = \be_{\mu \times \mu} \left[\ov(\gamma_0^n, \varphi_0^n)\right].
	\end{align}
	Write $K_{\mz}$ for the open cluster containing the origin.
	As the paths in $\cP^\infty$ are self-avoiding and start at $\mz$, the condition $Z_n > 0$ implies that $|K_{\mz}| \geq n$. As $Z_n$ is a non-negative random variable, the Paley-Zygmund-inequality implies that
	\begin{align}\label{eq:Paley Zygmund}
		\p \left(|K_\mz| \geq n\right) \geq \p \left(Z_n>0\right) \geq \frac{\E\left[Z_n\right]^2}{\E\left[Z_n^2\right]} = \frac{1}{\be_{\mu \times \mu} \left[\ov(\gamma_0^n, \varphi_0^n)\right]} .
	\end{align}
	Taking the infimum over all $n\in \N$, we get that
	\begin{align*}
		& \p \left(|K_\mz|=\infty\right) 
		= \inf_{n\in \N} \p \left(|K_\mz|>n\right) \geq \inf_{n\in \N} \p \left(Z_n>0\right) \geq \inf_{n\in \N} \left( \left(\be_{\mu \times \mu} \left[\ov(\gamma_0^n, \varphi_0^n)\right]\right)^{-1}\right) \\
		& =   \left( \sup_{n\in \N} \be_{\mu \times \mu} \left[\ov(\gamma_0^n, \varphi_0^n)\right]\right)^{-1}
		= \Big( \be_{\mu \times \mu} \left[\ov(\gamma, \varphi)\right]\Big)^{-1}.
	\end{align*}
	The last equality follows by monotone convergence since $\ov(\gamma_0^n, \varphi_0^n)$ is non-decreasing (in $n$) and $\lim_{n \to \infty} \ov(\gamma_0^n, \varphi_0^n) = \ov(\gamma, \varphi)$.
\end{proof}

\subsection{Proof of Theorem \ref{theo:main}}\label{sec:proofmain}

We start with the proof of Theorem \ref{theo:main}. We prove this theorem simultaneously for dimension $d=3$ and dimensions $d\geq 4$, using the concept of unpredictable paths introduced in Section \ref{sec:unpredictable}. For dimensions $d\geq 4$, one can also give a different proof of Theorem \ref{theo:main} that does not use unpredictable paths as described in Section \ref{sec:unpredictable} but uses ``normal" random walks with independent increments instead. We do not follow such a separate approach for dimensions $d\geq 4$ here. \\

\noindent
From condition \eqref{eq:non-summable} it directly follows that
\begin{align*}
	\infty = \sum_{x \in \Z^d \setminus \{\mz\}} p_x 
	= \sum_{n=1}^{\infty} \ \sum_{x: \|x\|_\infty = n} p_x
	\leq \sum_{i=1}^d \sum_{n=1}^{\infty} \ \sum_{\substack{x: \|x\|_\infty = n, \\ |x_i|=n}} p_x.
\end{align*}
Thus there needs to exist $i\in \{1,\ldots,d\}$ such that
\begin{equation}\label{eq:direction 1 infty}
	\sum_{n=1}^{\infty} \
	\sum_{\substack{x: \|x\|_\infty = n, \\ |x_i|=n}} p_x = \infty.
\end{equation}
Without loss of generality, we can assume that $i=1$. Let $u=(u_1,\ldots,u_d)$ and $v = (v_1,\ldots,v_d) \in \Z^d$ be two distinct vectors with $v_1, u_1 \geq 0$, such that $\eta \coloneqq \min(p_u,p_v ) >0$, and such that the vectors $(u_2,\ldots,u_d),(v_2,\ldots,v_d)\in \Z^{d-1}$ are linearly independent. The existence of such vectors is proved in the next claim and essentially follows from symmetry \eqref{eq:symmetry} and irreducibility \eqref{eq:irreduc}. 

\begin{claim}\label{claim:existence}
	Suppose that $(p_x)_{x\in \Z^d}$ satisfy conditions \eqref{eq:symmetry} and \eqref{eq:irreduc}. Then there exist vectors $u=(u_1,\ldots,u_d)$ and $v = (v_1,\ldots,v_d) \in \Z^d$ with $v_1, u_1 \geq 0$, such that $\eta \coloneqq \min(p_u,p_v ) >0$, and such that the vectors $(u_2,\ldots,u_d),(v_2,\ldots,v_d)\in \Z^{d-1}$ are linearly independent.
\end{claim}
\begin{proof}
	Define the set $A^+ = \left\{x \in \Z^d \setminus \{\mz\} : p_x > 0\right\}$.
	By irreducibility, for each standard unit vector $e_i \in \Z^d$, there exist $\alpha_1,\ldots,\alpha_n \in \N$ and $x_1,\ldots,x_n \in A^+$ such that $\alpha_1 x_1 + \ldots + \alpha_n x_n = e_1$. From this we can easily see that
	\begin{equation*}
		\text{span}(A^+) = \left\{\sum_{i=1}^{n} \lambda_i x_i : \lambda_i  \in \R, x_i \in A^+ , n \in \N\right\} = \R^d .
	\end{equation*}
	So in particular $\text{dim}\left(\text{span}(A^+)\right)=d$. Define the linear function $H:\R^d \to \R^{d-1}$ by
	\begin{equation*}
		H\left((u_1,\ldots,u_d)\right) = (u_2,\ldots,u_d).
	\end{equation*}
	Then the linearity directly implies that
	\begin{align*}
		\R^{d-1} = H\left(\R^d\right) = H \left(\text{span}(A^+)\right) = \left\{\sum_{i=1}^{n} \lambda_i H(x_i) : \lambda_i  \in \R, x_i \in A^+ , n \in \N\right\} .
	\end{align*}
	Since $d-1 \geq 2$, there need to exist at least two vectors $\tilde{u}=(\tilde{u}_1,\ldots,\tilde{u}_d),\tilde{v}=(\tilde{v}_1,\ldots,\tilde{v}_d) \in A^+$ such that $H(\tilde{u}) = (\tilde{u}_2,\ldots,\tilde{u}_d)$ and $H(\tilde{v}) = (\tilde{v}_2,\ldots,\tilde{v}_d)$ are linearly independent. As we assumed that the symmetry described in \eqref{eq:symmetry} holds, we get that
	\begin{equation*}
		p_{(\tilde{u}_1,\tilde{u}_2,\ldots,\tilde{u}_d)} = p_{(-\tilde{u}_1,\tilde{u}_2,\ldots,\tilde{u}_d)} \quad \text{ and } \quad p_{(\tilde{v}_1,\tilde{v}_2,\ldots,\tilde{v}_d)} = p_{(-\tilde{v}_1,\tilde{v}_2,\ldots,\tilde{v}_d)}
	\end{equation*}
	and thus we can also find vectors $u=(u_1,\ldots,u_d), v = (v_1,\ldots,v_d) \in A^+$ such that $(u_2,\ldots,u_d)$ and $ (v_2,\ldots,v_d)$ are linearly independent and $u_1, v_1 \geq 0$. 
\end{proof}

We also write $u^{+}=u$ and $v^{+}=v$, and we define $u^{-},v^{-} \in \Z^d$ by flipping all but the first coordinates of $u$, respectively $v$, i.e.,
\begin{equation*}
	u^{-}=(u_1,-u_2,-u_3,\ldots,-u_d) \text{ and } v^{-}=(v_1,-v_2,-v_3,\ldots,-v_d) .
\end{equation*}
As the vectors $(u_2,\ldots,u_d)$ and $(v_2,\ldots,v_d)$ are linearly independent, we directly get that $u^{+}, u^{-}, v^{+}, v^{-}$ are four different vectors. However, note that $u^+=-u^-$ or $v^+=-v^-$ can be the case if $u_1=0$, respectively if $v_1=0$. \\

Define $Q \coloneqq \max\{\|u\|_\infty, \|v\|_\infty\}$.
Throughout this section (the rest of Section \ref{sec:proofmain}), we assume that
\begin{equation}\label{eq:leq 1}
	\sum_{\substack{x: \|x\|_\infty = n, \\ |x_1|=n}} p_x \leq 1
\end{equation}
for all $n>Q$. Indeed, if this does not hold, then define the probabilities $\left(\tilde{p}_x\right)_{x\in \Z^d \setminus \{\mz\}}$ by 
\begin{equation*}
	\tilde{p}_x = \left(1 \vee \sum_{\substack{x: \|x\|_\infty = n, \\ |x_1|=n}} p_x \right)^{-1} p_x  \ \ \ \text{ for each $x\in \Z^d\setminus\{\mz\}$ with $\|x\|_\infty = n$}
\end{equation*}
and let $\tilde{\p}$ be the probability measure where each edge $\{x,y\}$ is open with probability $\tilde{p}_{x-y}$, independent of all other edges. The probability measure $\tilde{\p}$, respectively the collection $\left(\tilde{p}_x\right)_{x\in \Z^d \setminus \{\mz\}}$ satisfies the non-summability \eqref{eq:non-summable}, symmetry \eqref{eq:symmetry}, irreducibility \eqref{eq:irreduc}, and \eqref{eq:leq 1}. Using the {\sl Harris coupling} (cf. \cite[Section 1.3]{heydenreich2017progress}), one can couple the measures $\p$ and $\tilde{\p}$ as follows. Let $E = \left\{\{x,y\}\subset \Z^d, x \neq y\right\}$ and let $(U_{e})_{e \in E}$ be i.i.d. random variables such that each random variable $U_{e}$ is uniformly distributed on $[0,1]$. Define the percolation environments $(\omega(e))_{e\in E}$ and $(\tilde{\omega}(e))_{e\in E}$ by
\begin{equation*}
	\omega(\{x,y\})= \mathbbm{1}_{\{U_{\{x,y\}} \leq p_{x-y}\}} \quad \text{ and } \quad \tilde{\omega}(\{x,y\})= \mathbbm{1}_{\{U_{\{x,y\}} \leq \tilde{p}_{x-y}\}} .
\end{equation*}
Then $\omega$ is distributed like a percolation environment sampled from $\p$ and $\tilde{\omega}$ is distributed like a percolation environment sampled from $\tilde{\p}$. Since $\tilde{p}_{x-y} \leq p_{x-y}$ by the definition of $\tilde{p}_{x-y}$ above, we also directly get that $\tilde{\omega}(\{x,y\}) \leq \omega(\{x,y\})$. This directly implies that for all $N \in \N$
\begin{equation*}
	\left|K_\mz^N(\tilde{\omega})\right| = \infty \quad \Longrightarrow \quad \left|K_\mz^N(\omega)\right| = \infty,
\end{equation*}
where $K_\mz^N(\tilde{\omega})$ $($respectively $K_\mz^N(\omega))$ is the set of vertices $x \in \Z^d$ such that there exists a path from $\mz$ to $x$ consisting only of edges $\{a,b\}$ with $\|a-b\| \leq N$ and $\tilde{\omega}(\{a,b\}) = 1$ $($respectively $\omega(\{a,b\})=1)$.
Thus we see that $\p \left( |K_\mz^N|=\infty \right) \geq \tilde{\p} \left( |K_\mz^N|=\infty \right)$ for all $N \in \N$.
So showing that $\tilde{\p}(|K_\mz^N|=\infty) >0$ for this new measure and some $N\in \N$ implies that $\p(|K_\mz^N|=\infty) > 0$ for the original measure. Because of this, we can assume that \eqref{eq:leq 1} holds for all $n>Q$. \\

\noindent
For $M>3Q$, define the set $A_M \subset \Z^d$ by
\begin{equation}\label{eq:AM defini}
	A_M = \left\{x=(x_1,\ldots,x_d)\in \Z^d :  \|x\|_\infty \leq M, x_1 \geq 3Q \right\}
\end{equation}
and a probability measure $\psi_M$ on $A_M$ by
\begin{equation*}
	\psi_M(x) = \frac{p_x}{\sum_{y\in A_M} p_y}.
\end{equation*}
Also, note that $\sum_{y\in A_M} p_y$ diverges to $+\infty$ as $M\to \infty$, by \eqref{eq:direction 1 infty} and the reflection-symmetry \eqref{eq:symmetry}. From here on, fix $M>3Q$ large enough so that
\begin{equation}\label{eq:Mconditions}
	\eps^{-1} \coloneqq \sum_{y\in A_M} p_y \geq 10^{10} \text{ and } \frac{25004\eps^{0.05}}{\eta^2} \leq \frac{1}{4} .
\end{equation}

Let $(X_n)_{n\in \N_0}$ be i.i.d. random variables with probability mass function $\psi_M$ and let $(S_n)_{n\in \N_0}$, $(S_n^\prime)_{n\in \N_0}$ be random walks with increments in $\{-1,+1\}$ satisfying \eqref{eq:predict 100 old}. Furthermore, let  $(S_n)_{n\in \N_0}$, $(S_n^\prime)_{n\in \N_0}$, and $(X_n)_{n\in \N_0}$ be independent.
We define a random path $\zeta=(\zeta_0,\ldots)$ by
\begin{align}\label{eq:gamma}
	\zeta_0=\mz \text{ and } \zeta_{k+1}-\zeta_k = 
	\begin{cases}
		X_{k/3} & \text{ if } k = 0 \mod 3 \\
		u^{+} & \text{ if } k = 1 \mod 3, S_{\frac{k+2}{3}} - S_{\frac{k-1}{3}} = +1 \\
		u^{-} & \text{ if } k = 1 \mod 3, S_{\frac{k+2}{3}} - S_{\frac{k-1}{3}} = -1\\
		v^{+} & \text{ if } k = 2 \mod 3, S_{\frac{k+1}{3}}^\prime - S_{\frac{k-2}{3}}^\prime = +1 \\
		v^{-} & \text{ if } k = 2 \mod 3, S_{\frac{k+1}{3}}^\prime - S_{\frac{k-2}{3}}^\prime = -1
	\end{cases} .
\end{align}
Note that $\langle \zeta_{k+1}-\zeta_k , e_1 \rangle\geq 0$ for all $k\in \N_0$, with a strict inequality if $k=0\mod 3$. Further, by the definition of $u^+, u^-, v^+$, and $v^-$, one sees that the path $(\zeta_k)_{k\in \N_0}$ is self-avoiding.
We write $\zeta_0^n = (\zeta_0,\ldots,\zeta_n)$ for the first $n$ steps of $\zeta$ and $\hat{\mu}$ for the distribution of the sequence $(\zeta_k)_{k\in \N_0}$ constructed in \eqref{eq:gamma}. 
From the construction, it is also clear, that for a given path segment $\zeta_0^n$, one can reconstruct the random variables $X_k$ for $k=0,\ldots,\lceil \frac{n}{3} \rceil - 1$ and the random variables $S_{k+1}-S_{k}$ for $k=0,\ldots,\lceil \frac{n-1}{3} \rceil - 1$ and $S_{k+1}^\prime-S_{k}^\prime$ for $k=0,\ldots,\lceil \frac{n-2}{3} \rceil-1$.
Let $M_1$ be the set of probability measures on $\cP^\infty$ that are the distributions of $(\zeta_{n}-\zeta_{3N})_{n\geq 3N}$, conditioned on $\zeta_0,\ldots,\zeta_{3N}$, where $(\zeta_j)_{j\in \N_0}$ has distribution $\hat{\mu}$ and $N\in \N_0$. One important feature about the measure $M_1$ is that if the distribution of a random self-avoiding path $\left(\gamma_k\right)_{k\in \N_0}$ is in $M_1$ and $K$ is a random variable such that $K=0 \mod 3$ almost surely and $K$ is measurable with respect to $\sigma(\gamma_0,\ldots,\gamma_K)$, then also the distribution of the sequence  $\left(\gamma_{K+k}-\gamma_K\right)_{k\in \N_0}$ lies in $M_1$.\\

Next, we consider two paths $\gamma=(\gamma_k)_{k\in \N_0}$ and $\varphi=(\varphi_k)_{k\in \N_0}$ that are distributed according to measures in $M_1$. We will mostly work with the convention that the two infinite paths are sampled from the measure $\bp_{\tilde{\mu}\times \mu}$, and that $\gamma$ has distribution $\tilde{\mu}$, whereas $\varphi$ has distribution $\mu$.
Define the number $W_n$ by
\begin{equation}\label{eq:Wn defini 1}
	W_n = \sup_{\tilde{\mu},\mu \in M_1, x \in \Z^d} \be_{\tilde{\mu} \times \mu} \left[\ov(x+\gamma_0^{3n},\varphi_0^\infty)\right].
\end{equation}
Note that almost surely
\begin{equation*}
	\ov(x+\gamma_0^{3n},\varphi_0^\infty) \leq \left(\eta \wedge \min\{p_y: y \in A_M, p_y>0\} \right)^{-3n},
\end{equation*}
so in particular $W_n<\infty$. Further, note that the numbers $W_n$ are non-decreasing in $n$, as for fixed $x \in \Z^d, \gamma_0^\infty, \varphi_0^\infty \in \cP^\infty$ the weighted overlap $\ov(x+\gamma_0^{3n},\varphi_0^\infty)$ is also non-decreasing in $n$.
In Lemma \ref{lem:Wn def 1} below, we derive a bootstrapping-type inequality for $W_n$, which implies that $\sup_n W_n \leq 2 < \infty$. Thus we get that $\be_{\hat{\mu} \times \hat{\mu}} \left[ \ov(\gamma_0^{3n},\varphi_0^{3n}) \right] \leq W_n \leq 2$.
Proposition \ref{prop:measure on paths} then implies that
\begin{equation*}
	\p\left(|K_\mz^M|=\infty\right) 
	\geq
	\left( \be_{\hat{\mu} \times \hat{\mu}} \left[ \ov(\gamma_0^\infty,\varphi_0^\infty) \right]\right)^{-1}
	=
	\left( \sup_{n} \be_{\hat{\mu} \times \hat{\mu}} \left[ \ov(\gamma_0^{3n},\varphi_0^{3n}) \right]\right)^{-1}
	\geq
	\frac{1}{\sup_n W_n} \geq \frac{1}{2} > 0.
\end{equation*}

\begin{lemma}\label{lem:Wn def 1}
	For the quantity $W_n$ defined in \eqref{eq:Wn defini 1} one has $W_n \leq 2$ for all $n\in \N$.
\end{lemma}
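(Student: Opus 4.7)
The plan is to prove $W_n \leq 2$ by induction on $n$ via a bootstrap inequality of the form $W_n \leq 1 + C\cdot W_{n-1}$ with $C \leq 1/4$ (the constant $1/4$ matching the quantitative condition $25004\eps^{0.05}/\eta^2 \leq 1/4$ from \eqref{eq:Mconditions}). The base case $W_0 = 1$ is immediate (paths of length zero have no edges, so $\ov \equiv 1$), and iterating yields $W_n \leq \sum_{k\geq 0}4^{-k} = 4/3 \leq 2$.

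The starting point is the edge-disjoint factorization
\begin{equation*}
\ov(x+\gamma_0^{3n}, \varphi_0^\infty) = \ov(x+\gamma_0^3, \varphi_0^\infty)\cdot \ov(x+\gamma_3^{3n}, \varphi_0^\infty),
\end{equation*}
valid because $\gamma$ is self-avoiding and so no edge appears in both factors. By the shift property of $M_1$ recorded just below its definition, the conditional law of the shifted tail $\gamma' \coloneqq (\gamma_{3+k}-\gamma_3)_{k\geq 0}$ given $\gamma_0^3$ is again in $M_1$ and is independent of $\varphi$. Writing the first factor as $1 + (\ov(x+\gamma_0^3, \varphi_0^\infty) - 1)$ and integrating, the contribution of the constant piece is exactly $\be[\ov(x+\gamma_3^{3n}, \varphi_0^\infty)] \leq W_{n-1}$, obtained by applying the sup defining $W_{n-1}$ to the pair $(\gamma', \varphi)$ after absorbing the translation $\gamma_3$ into $x$.

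The bulk of the work is to bound the error term $\be[(\ov(x+\gamma_0^3, \varphi_0^\infty)-1)\cdot \ov(x+\gamma_3^{3n}, \varphi_0^\infty)]$ by $(1/4)W_{n-1}$. This term is supported on the event that some edge of the first cycle of $\gamma$ lies in $\varphi$, and I would decompose it according to which of the three edges (long $X_0$-edge, short $u^\pm$-edge, short $v^\pm$-edge) is matched. For the long edge, a match forces some long step $X_m'$ of $\varphi$ to equal $X_0\in A_M$; since both $X_0, X_m' \sim \psi_M$ and $\psi_M(X) = \eps\cdot p_X$, the weighted match probability $p_{X_0}^{-1}\bp(\text{match at index }m)$ is of order $\eps$ per candidate index, and a summation over the feasible indices, controlled by the long-direction drift via Lemma \ref{lem1} and the transverse dispersion via Proposition \ref{prop:unpredictable}, produces the factor $\eps^{0.05}$. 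For the short $u^\pm$ and $v^\pm$ edges, a match forces the unpredictable sign of $\gamma$ at the relevant step to coincide with that of $\varphi$, an event of probability at most $100 k^{-\log 2/\log 3}$ by Proposition \ref{prop:unpredictable}; multiplying by $p_u^{-1}, p_v^{-1} \leq \eta^{-1}$ and summing over candidate indices contributes the $\eta^{-2}$-factor. The quantitative hypothesis in \eqref{eq:Mconditions} is tailored precisely so that the product is at most $1/4$.

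The main obstacle is the shared dependence of both overlap factors on $\varphi$, which prevents a naive product bound. I would resolve this by conditioning on the finite combinatorial data specifying which edges of $\gamma_0^3$ match which edges of $\varphi$: for each such match configuration, the first factor becomes a deterministic product of $p_e^{-1}$'s, while the tail factor $\ov(x+\gamma_3^{3n}, \varphi_0^\infty)$ is bounded by $W_{n-1}$ after identifying the conditional law of $\varphi$ beyond the last matched edge as still lying in $M_1$ (up to a reflection and a translation absorbed into the sup defining $W_{n-1}$). Summing the few match configurations weighted by their small probabilities then produces the constant $C \leq 1/4$, closing the bootstrap.
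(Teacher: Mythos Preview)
Your decomposition does not close into the bootstrap you claim. From the factorization
\[
\ov(x+\gamma_0^{3n},\varphi_0^\infty)=\ov(x+\gamma_0^3,\varphi_0^\infty)\cdot \ov(x+\gamma_3^{3n},\varphi_0^\infty)
=\ov(x+\gamma_3^{3n},\varphi_0^\infty)+\bigl(\ov(x+\gamma_0^3,\varphi_0^\infty)-1\bigr)\cdot \ov(x+\gamma_3^{3n},\varphi_0^\infty),
\]
the ``constant piece'' contributes $\be\bigl[\ov(x+\gamma_3^{3n},\varphi_0^\infty)\bigr]\leq W_{n-1}$, as you correctly say. Even granting your bound $(1/4)W_{n-1}$ on the error term, this yields
\[
W_n\;\leq\;W_{n-1}+\tfrac14 W_{n-1}\;=\;\tfrac54\,W_{n-1},
\]
not $W_n\leq 1+\tfrac14 W_{n-1}$. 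Iterating gives $W_n\leq (5/4)^n$, which is useless. The ``$1$'' in your target inequality has no origin in your argument: on the event that $\gamma_0^3$ shares no edge with $\varphi$, the remaining overlap $\ov(x+\gamma_3^{3n},\varphi_0^\infty)$ need not equal $1$, because later cycles of $\gamma$ may still overlap with $\varphi$.

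The paper's proof avoids this by \emph{not} inducting on $n$ and \emph{not} peeling off one cycle at a time. Instead it proves, for each fixed $n$, the self-referential inequality $W_n\leq 1+\tfrac12 W_n$ (using $W_n<\infty$ a priori), hence $W_n\leq 2$. The decomposition is according to the \emph{first meeting time}: one defines stopping times $K,N$ (the first multiples of $3$ by which the two paths have shared a vertex). On $\{K=\infty\}$ the overlap is exactly $1$, which is where the ``$1$'' comes from. On $\{K<\infty\}$, the $e_1$-drift structure forces the pre-meeting overlap $\ov(x+\gamma_0^K,\varphi_0^N)$ to involve at most two short $u^\pm,v^\pm$ edges, hence to be $\leq \eta^{-2}$, while the post-meeting overlap is bounded by $W_n$ via the $M_1$-shift property applied to \emph{both} paths. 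Claim~\ref{claim:hitting} then bounds $\bp(K<\infty)$ by $25004\,\eps^{0.05}$, and condition~\eqref{eq:Mconditions} converts $\eta^{-2}\bp(K<\infty)$ into the factor $\tfrac14$. Your proposal of conditioning on ``match configurations in $\gamma_0^3$'' also leaves unresolved the dependence of $\ov(x+\gamma_3^{3n},\varphi_0^\infty)$ on the \emph{pre-match} segment $\varphi_0^N$; the paper's first-meeting approach handles this cleanly via the monotonicity of $\langle\,\cdot\,,e_1\rangle$, which guarantees that $x+\gamma_K^{3n}$ and $\varphi_0^N$ (and likewise $x+\gamma_0^K$ and $\varphi_N^\infty$) share no edges.
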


\begin{proof}
	
	We show that for all $\tilde{\mu},\mu \in M_1$ and all $x\in \Z^d$ one has
	\begin{equation*}
		\be_{\tilde{\mu} \times \mu} \left[\ov(x+\gamma_0^{3n},\varphi_0^\infty)\right] \leq 1 + \frac{W_n}{2},
	\end{equation*}
	which implies that $W_n \leq 1 + \frac{W_n}{2}$, or $W_n \leq 2$; note that one can safely rearrange these inequalities as $W_n < \infty$. For the inequality above, we distinguish the cases $x=\mz$ and $x\neq \mz$.\\
	
	\hypertarget{targetcase1}{\textbf{Case 1:}}
	We start with the case  $x\neq \mz$. Define the random variables $K=K(x+\gamma_0^{3n},\varphi_0^\infty)$ and $N=N(x+\gamma_0^{3n},\varphi_0^\infty)$ by 
	\begin{align*}
		& K(x+\gamma_0^{3n},\varphi_0^\infty) = \inf \left\{z\in \{0,\ldots,3n\} : \ z=0 \mod 3 \text{ and } x+\gamma_{\tilde{z}}=\varphi_\ell \text{ for some $\tilde{z}\leq z, \ell\in \N_0$}\right\},\\
		&
		N(x+\gamma_0^{3n},\varphi_0^\infty) = \inf \left\{\ell\in \N_0 : \ \ell=0 \mod 3 \text{ and } x+\gamma_{z}=\varphi_{\tilde{\ell}} \text{ for some $\tilde{\ell}\leq \ell, z \in \{0,\ldots,3n\}$}\right\}.
	\end{align*}
	Both $K$ and $N$ can attain the value $+\infty$ if there does not exist $z,\ell$ fulfilling the necessary requirements above. Also, note that $K<\infty$ if and only if $N<\infty$.
	
	Assume that $\tilde{z}\in \{0,\ldots,3n\}$ and $\tilde{\ell} \in \N_0$ are such that $x+\gamma_{\tilde{z}}=\varphi_{\tilde{\ell}}$ and $z\in \{0,3,6,\ldots,3n\}$ and $\ell \in 3\N_0$ are the smallest integer multiples of $3$ such that $K = z \geq \tilde{z}$ and $ \ell \geq \tilde{\ell}$. We will first argue that this implies that $N=\ell$. Assume that there exists $\hat{\ell}\leq \ell -3$ and $\hat{z} \in \{0,\ldots,3n\}$ such that $\varphi_{\hat{\ell}}=x+\gamma_{\hat{z}}$. Let $z^\prime$ and $\ell^\prime$ be the smallest integer multiple of $3$ for which $z^\prime \geq \hat{z}$ and $\ell^\prime \geq \hat{\ell}$. Then $\gamma_{z^\prime}=\gamma_{\hat{z}}+r$, where $r\in \Z^d$ is of the form $r=\sum_{i=1}^{j} v_i$, with $j\in \{0,1,2\}$ and $v_i \in \{u^+, u^-, v^+, v^-\}$. So in particular $\langle \gamma_{z^\prime}, e_1 \rangle \leq \langle \gamma_{\hat{z}}, e_1 \rangle+2Q$. As $\ell^\prime = 0 \mod 3$ and $\tilde{\ell} > \ell^\prime$, by the definition of the set $A_M$ \eqref{eq:AM defini}, one has $\langle \varphi_{\tilde{\ell}}, e_1 \rangle \geq \langle \varphi_{\ell^\prime}, e_1 \rangle+3Q$. Combining these two inequalities with the fact that $t \to \langle  x+\gamma_{t}, e_1 \rangle$ and $t \to \langle  \varphi_{t}, e_1 \rangle$ are increasing in $t$, we get that
	\begin{align*}
		\langle  x+\gamma_{z^\prime}, e_1 \rangle 
		&\leq 
		\langle  x+\gamma_{\hat{z}}, e_1 \rangle + 2Q
		=
		\langle  \varphi_{\hat{\ell}}, e_1 \rangle + 2Q
		\leq
		\langle  \varphi_{\ell^\prime}, e_1 \rangle + 2Q\\
		&
		\leq
		\langle  \varphi_{\tilde{\ell}}, e_1 \rangle -Q
		=
		\langle  x+\gamma_{\tilde{z}}, e_1 \rangle -Q
		<
		\langle  x+\gamma_{\tilde{z}}, e_1 \rangle 
		\leq
		\langle  x+\gamma_{z}, e_1 \rangle .
	\end{align*}
	As $\langle  x+\gamma_{t}, e_1 \rangle$ is increasing in $t$, this implies that $z^\prime < z$. However, this is a contradiction to the fact that $z=K$ and the fact that $K$ was defined to be the smallest integer multiple of $3$ for which there exists $\tilde{z} \in \{z-2,z-1,z\}$ and $\tilde{\ell} \in \N_0$ such that $x+\gamma_{\hat{z}}=\varphi_{\hat{\ell}}$. So we see that $N=\ell$.
	
	Next, we will argue that the path $x+\gamma_0^K$ can not have any joint edges with $\varphi_N^\infty$:
	Using that $\langle x+\gamma_z, e_1 \rangle \leq \langle x+\gamma_{\tilde{z}},e_1 \rangle +2Q$ and that $\langle\varphi_{\ell+1},e_1\rangle \geq \langle\varphi_{\ell},e_1\rangle + 3Q$ by the definition of the set $A_M$ \eqref{eq:AM defini}, we see that
	\begin{align*}
		\langle\varphi_{N+1},e_1\rangle &
		=
		\langle\varphi_{\ell+1},e_1\rangle \geq \langle\varphi_{\ell},e_1\rangle + 3Q > 
		\langle\varphi_{\ell},e_1\rangle + 2Q 
		\geq
		\langle\varphi_{\tilde{\ell}},e_1\rangle + 2Q 
		=
		\langle x+\gamma_{\tilde{z}},e_1\rangle + 2Q \\
		&
		\geq
		\langle x+\gamma_z,e_1\rangle 
		=
		\langle x+\gamma_K,e_1\rangle 
	\end{align*}
	and thus $\langle \varphi_{N+i}, e_1 \rangle > \langle x + \gamma_m, e_1 \rangle$ for all $i \in \N_{>0}$ and $m\in \{0,\ldots,K\}$. In particular, this implies that $x+\gamma_0^K$ can not have any joint edges with $\varphi_N^\infty$. By the same argument, $\varphi_0^N$ can not have any joint edges with $x+\gamma_K^\infty$. Thus, if $K<\infty$, we can write
	\begin{equation}\label{eq:overlap}
		\ov(x+\gamma_0^{3n},\varphi_0^\infty) = \ov(x+\gamma_0^{K},\varphi_0^N) \cdot 
		\ov(x+\gamma_K^{3n},\varphi_N^\infty)
	\end{equation}
	The two paths $x+\gamma_0^K$ and $\varphi_0^N$ can overlap on at most two edges that are translates of elements in $\{u^+, u^-, v^+, v^-\}$. Indeed, if the paths $x+\gamma_0^{3n}$ and $\varphi_0^\infty$ overlap on any edge $\{a,b\}=\{x+\gamma_j, x+\gamma_{j+1}\}=\{\varphi_i, \varphi_{i+1}\}$ which is not a translate of an element in $\{u^+, u^-, v^+, v^-\}$, then, by construction, $i \mod 3 = j \mod 3 = 0 \mod 3$, which implies that
	$N \leq i, K \leq j$. Thus, if $K<\infty$, we see that $\ov(x+\gamma_0^{K},\varphi_0^N) \leq \min(p_{u^+},p_{u^-},p_{v^+},p_{v^-}) = \eta^{-2}$. Plugging this into \eqref{eq:overlap} yields that
	\begin{align}\label{total problaw}
		\ov(x+\gamma_0^{3n},\varphi_0^\infty) & \leq 1 +
		\frac{1}{\eta^2} \mathbbm{1}_{\{K(x+\gamma_0^{3n},\varphi_0^\infty)<\infty\}}  \ov(x+\gamma_{K}^{3n},\varphi_{N}^\infty) .
	\end{align}
	From this formula, we see that it is of central importance to control the probability $\p(K<\infty)$. In Lemma \ref{lemma:hitting} below, we show that
	\begin{align}\label{eq:hitting different point in proof}
		\bp_{\tilde{\mu}\times \mu}\left(K\left(x+\gamma_0^{3n},\varphi_0^\infty\right) < \infty \right) \leq & \bp_{\tilde{\mu}\times \mu} \left(\exists k,m\in \N_0: x+\gamma_k=\varphi_m \right) \leq 25004\eps^{0.05} \leq \frac{1}{4} ,
	\end{align}
	where the last inequality follows from the assumption on $\eps$ \eqref{eq:Mconditions}. The assumption \eqref{eq:assumptioneps} of Lemma \ref{lemma:hitting} is satisfied in the situation considered because
	\begin{equation*}
		\bp_{\tilde{\mu}\times \mu} \left(\langle \varphi_1, e_1 \rangle = z\right) 
		= 
		\bp_{\tilde{\mu}\times \mu} \left(\langle \gamma_1, e_1 \rangle = z\right) = \frac{\sum_{y\in A_M: \langle y,e_1\rangle =z} p_y }{\sum_{y\in A_M} p_y} \overset{\eqref{eq:leq 1}}{\leq } \frac{1}{\sum_{y\in A_M} p_y} \overset{\eqref{eq:Mconditions}}{=} \eps \leq 10^{-10}
	\end{equation*}
	for all $z\in \Z$.
	From the two paths $\gamma_0^\infty$ and $\varphi_0^\infty$, on the event $K<\infty$, we define two new paths by $\left(\tilde{\gamma}_i\coloneqq\gamma_{K+i}-\gamma_K\right)_{i \in \N_0}$ and $(\tilde{\varphi}_i \coloneqq \varphi_{N+i}-\varphi_N)_{i\in \N_0}$.
	Using these two shifted paths, we can rewrite and bound the second part of the above inequality by
	\begin{align*}
		&\ov(x+\gamma_{K}^{3n},\varphi_{N}^\infty) = \ov(x-\varphi_{N}+\gamma_K-\gamma_K+\gamma_{K}^{3n},-\varphi_N+\varphi_{N}^\infty)\\
		&
		=
		\ov(x-\varphi_{N}+\gamma_K+\tilde{\gamma}_{0}^{3n-K},\tilde{\varphi}_{0}^\infty)
		\leq
		\ov(x-\varphi_{N}+\gamma_K+\tilde{\gamma}_{0}^{3n},\tilde{\varphi}_{0}^\infty).
	\end{align*}
	Note that the random variables $K$ and $N$ are measurable with respect to the $\sigma$-algebra generated by $\gamma_0,\ldots,\gamma_K,\varphi_0,\ldots,\varphi_N$. Given $\sigma\left(\gamma_0^K, \varphi_0^N\right)$, the distributions of $\left(\tilde{\gamma}_i\right)_{i\in \N_0}$ and $\left(\tilde{\varphi}_i\right)_{i\in \N_0}$ are still measures in $M_1$, and the random variable $-\varphi_{N}+\gamma_K$ is measurable with respect to $\sigma\left(\gamma_0^K, \varphi_0^N\right)$.
	Thus we see that
	\begin{equation*}
		\be_{\tilde{\mu} \times \mu} \left[ \ov(x-\varphi_{N}+\gamma_K+\tilde{\gamma}_{0}^{3n},\tilde{\varphi}_{0}^\infty) | \sigma\left(\gamma_0^K, \varphi_0^N\right)\right] \leq W_n
	\end{equation*}
	and using the tower rule implies that
	\begin{align*}
		& \be_{\tilde{\mu} \times \mu} \left[ \ov(x+\gamma_{K}^{3n},\varphi_{N}^\infty) | K<\infty \right]
		=
		\be_{\tilde{\mu} \times \mu} \left[ \be_{\tilde{\mu} \times \mu} \left[ \ov(x+\gamma_{K}^{3n},\varphi_{N}^\infty) | \sigma\left(\gamma_0^K, \varphi_0^N\right)\right] | K<\infty \right]
		\\
		&
		\leq
		\be_{\tilde{\mu} \times \mu} \left[ \be_{\tilde{\mu} \times \mu} \left[ \ov(x-\varphi_{N}+\gamma_K+\tilde{\gamma}_{0}^{3n},\tilde{\varphi}_{0}^\infty) | \sigma\left(\gamma_0^K, \varphi_0^N\right)\right] | K<\infty \right]
		\leq
		\be_{\tilde{\mu} \times \mu}  \left[W_n | K<\infty \right]\\
		& = W_n.
	\end{align*}
	Taking expectations in \eqref{total problaw} and using the law of total probability, we get that for all $x\neq \mz$
	\begin{align}\label{eq:different starting point}
		&\notag \be_{\tilde{\mu} \times \mu} \left[ \ov(x+\gamma_0^{3n},\varphi_0^\infty) \right]
		\leq
		1 +
		\frac{1}{\eta^2} 
		\be_{\tilde{\mu} \times \mu} \left[\ov(x+\gamma_{K}^{3n},\varphi_{N}^\infty) | K<\infty\right] 
		\bp_{\tilde{\mu}\times \mu} \left( K<\infty \right)\\
		&
		\leq
		1 +
		\frac{1}{\eta^2} 
		W_n
		\bp_{\tilde{\mu}\times \mu} \left( K<\infty \right)
		\leq
		1 +
		\frac{W_n}{4}
	\end{align}
	where the last inequality holds because of \eqref{eq:hitting different point in proof}. This finishes the case $x\neq \mz$.\\
	
	\textbf{Case 2:}
	In the case where $x=\mz$, we define the random variables $K^\prime=K^\prime(\gamma_0^{3n},\varphi_0^\infty)$ and $N^\prime=N^\prime(\gamma_0^{3n},\varphi_0^\infty)$ by 
	\begin{align*}
		& K^\prime(\gamma_0^{3n},\varphi_0^\infty) = \inf \left\{z\in \{1,\ldots,3n\} : \ z=0 \mod 3 \text{ and } \gamma_{\tilde{z}}=\varphi_\ell \text{ for some $\tilde{z}\leq z, \ell\in \N_{>0}$}\right\},\\
		&
		N^\prime(\gamma_0^{3n},\varphi_0^\infty) = \inf \left\{\ell\in \N_{>0} : \ \ell=0 \mod 3 \text{ and } \gamma_{z}=\varphi_{\tilde{\ell}} \text{ for some $\tilde{\ell}\leq \ell, z \in \{1,\ldots,3n\}$}\right\}.
	\end{align*}
	The only difference to the random variables $K$ and $N$ defined above is that we exclude the trivial case $z=0, \ell=0$, for which $\varphi_0=\mz=\gamma_0$. Also, note that $K^\prime < \infty$ if and only if $N^\prime < \infty$.
	On the event where $\gamma_1=\varphi_1=w$ for some $w\in \Z^d$, the paths $\gamma_0^3$ and $\varphi_3^\infty$, respectively $\varphi_0^3$ and $\gamma_3^\infty$ can have no joint edges as
	\begin{equation*}
		\langle \gamma_4 , e_1 \rangle 
		\geq 
		\langle \gamma_1 , e_1 \rangle + 3Q
		=
		\langle \varphi_1 , e_1 \rangle + 3Q
		>
		\langle \varphi_1 , e_1 \rangle + 2Q
		\geq
		\langle \varphi_3 , e_1 \rangle ,
	\end{equation*}
	and the same holds also with the roles of $\varphi$ and $\gamma$ reversed. Thus we get that
	\begin{align*}
		&\mathbbm{1}_{\{\gamma_1 = w =\varphi_1\}} \ov\left(\gamma_0^{3n}, \varphi_0^\infty\right)
		=
		\mathbbm{1}_{\{\gamma_1 = w =\varphi_1\}}  \ov\left(\gamma_0^3, \varphi_0^3\right)
		\ov\left(\gamma_3^{3n}, \varphi_3^\infty\right)\\
		&
		=
		\mathbbm{1}_{\{\gamma_1 = w =\varphi_1\}} \frac{1}{p_w} \ov\left(\gamma_1^3, \varphi_1^3\right)
		\ov\left(\gamma_3^{3n}, \varphi_3^\infty\right)
		\leq
		\mathbbm{1}_{\{\gamma_1 = w =\varphi_1\}} \frac{1}{p_w} \frac{1}{\eta^2}
		\ov\left(\gamma_3^{3n}, \varphi_3^\infty\right)
	\end{align*}
	Also, note that the event $\gamma_1 = w =\varphi_1$ implies that $K^\prime=N^\prime=3$.
	On the event where $K^\prime < \infty$, the same argument as in \hyperlink{targetcase1}{Case 1} above shows that $\gamma_0^{K^\prime}$ can have no joint edges with $\varphi_{N^\prime}^\infty$, and that $\varphi_0^{N^\prime}$ can have no joint edges with $\gamma_{K^\prime}^\infty$. 
	On the event where $K^\prime < \infty$, but $\varphi_1 \neq \gamma_1$, the two paths $\gamma_0^{K^\prime}$ and $\varphi_0^{N^\prime}$ can only intersect on translates of $\{u^+, u^-, v^+, v^-\}$. In particular
	\begin{align*}
		& \mathbbm{1}_{\{\gamma_1 \neq \varphi_1\}} \mathbbm{1}_{\{K^\prime < \infty\}} \ov\left(\gamma_0^{3n}, \varphi_0^\infty\right)
		=
		\mathbbm{1}_{\{\gamma_1 \neq \varphi_1\}} \mathbbm{1}_{\{K^\prime < \infty\}} \ov\left(\gamma_0^{K^\prime}, \varphi_0^{N^\prime}\right) \ov\left(\gamma_{K^\prime}^{3n}, \varphi_{N^\prime}^\infty\right)\\
		&
		\leq
		\mathbbm{1}_{\{\gamma_1 \neq \varphi_1\}} \mathbbm{1}_{\{K^\prime < \infty\}} \frac{1}{\eta^2} \ov\left(\gamma_{K^\prime}^{3n}, \varphi_{N^\prime}^\infty\right)
		\leq
		\mathbbm{1}_{\{K^\prime < \infty\}} \frac{1}{\eta^2} \ov\left(\gamma_{K^\prime}^{3n}, \varphi_{N^\prime}^\infty\right)
	\end{align*}
	Going through all of these cases ($\gamma_1 = w =\varphi_1$ for some $w\in A_M$, $K^\prime < \infty$ but $\gamma_1\neq \varphi_1$, and $K^\prime=\infty$), we see that the inequality
	\begin{align}\label{eq:overlap bound}
		\ov(\gamma_0^{3n},\varphi_0^\infty) \leq &
		1 + \sum_{w\in A_M} \frac{1}{\eta^2 p_w} \mathbbm{1}_{\{\gamma_1 = w =\varphi_1\}} \mathbbm{1}_{\{K^\prime < \infty\}}  \ov(\gamma_{K^\prime}^{3n},\varphi_{N^\prime}^\infty)
		+ \frac{1}{\eta^2}  \ov \left(\gamma_{K^\prime}^{3n},\varphi_{N^\prime}^\infty\right) \mathbbm{1}_{\{K^\prime<\infty\}}
	\end{align}
	holds. From this formula, we see that it is of central importance to control the probability $\p(K^\prime<\infty)$. In Lemma \ref{lemma:hitting} below, we show that
	\begin{align}\label{eq:hitting same point in proof}
		\bp_{\tilde{\mu}\times \mu} \left( K^\prime \left(\gamma_0^{3n},\varphi_0^\infty\right) < \infty \right) \leq & \bp_{\tilde{\mu}\times \mu} \left(\exists k,m\in \N: \gamma_k=\varphi_m \right) \leq 25004\eps^{0.05} \leq \frac{1}{4}  .
	\end{align}
	where the last inequality follows from the assumption on $\eps$ \eqref{eq:Mconditions}. The assumption \eqref{eq:assumptioneps} of Lemma \ref{lemma:hitting} is satisfied in the situation considered because
	\begin{equation*}
		\bp_{\tilde{\mu}\times \mu} \left(\langle \varphi_1, e_1 \rangle = z\right) 
		= 
		\bp_{\tilde{\mu}\times \mu} \left(\langle \gamma_1, e_1 \rangle = z\right) = \frac{\sum_{y\in A_M: \langle y,e_1\rangle =z} p_y }{\sum_{y\in A_M} p_y} \overset{\eqref{eq:leq 1}}{\leq } \frac{1}{\sum_{y\in A_M} p_y} \overset{\eqref{eq:Mconditions}}{=} \eps \leq 10^{-10}
	\end{equation*}
	for all $z\in \Z$.
	
	If $K^\prime<\infty$, conditioned on the $\sigma$-Algebra $\sigma\left(\gamma_0^{K^\prime},\varphi_0^{N^\prime}\right)$, the sequences $\left(\tilde{\varphi}_i = \varphi_{N^\prime+i} - \varphi_{N^\prime} \right)_{i\in \N_0}$ and $\left(\tilde{\gamma}_i = \gamma_{K^\prime+i} - \gamma_{K^\prime} \right)_{i\in \N_0}$ are still distributed according to measures in $M_1$. Further we have
	\begin{align*}
		& \ov\left(\gamma_{K^\prime}^{3n}, \varphi_{N^\prime}^{\infty} \right)
		=
		\ov\left(-\varphi_{N^\prime}+\gamma_{K^\prime}-\gamma_{K^\prime}+\gamma_{K^\prime}^{3n}, -\varphi_{N^\prime}+\varphi_{N^\prime}^{\infty} \right)\\
		&
		=
		\ov\left(-\varphi_{N^\prime}+\gamma_{K^\prime}+\tilde{\gamma}_{0}^{3n-K^\prime}, \tilde{\varphi}_{0}^{\infty} \right)
		\leq
		\ov\left(-\varphi_{N^\prime}+\gamma_{K^\prime}+\tilde{\gamma}_{0}^{3n}, \tilde{\varphi}_{0}^{\infty} \right)
	\end{align*}
	which implies that
	\begin{align*}
		&\be_{\tilde{\mu} \times \mu} \left[ \ov(\gamma_{K^\prime}^{3n},\varphi_{N^\prime}^\infty) | K^\prime<\infty \right]
		=
		\be_{\tilde{\mu} \times \mu} \left[ \be_{\tilde{\mu} \times \mu} \left[ \ov(\gamma_{K^\prime}^{3n},\varphi_{N^\prime}^\infty) | \sigma(\gamma_0^{K^\prime}, \varphi_0^{N^\prime})\right] \Big| K^\prime<\infty \right]
		\\
		&
		\leq
		\be_{\tilde{\mu} \times \mu} \left[ \be_{\tilde{\mu} \times \mu} \left[ \ov(\varphi_{N^\prime}+\gamma_{K^\prime}+\tilde{\gamma}_{0}^{3n},\tilde{\varphi}_{0}^\infty) | \sigma(\gamma_0^{K^\prime}, \varphi_0^{N^\prime})\right] \Big| K^\prime<\infty \right]
		\leq
		\be_{\tilde{\mu} \times \mu}  \left[W_n \big| K^\prime<\infty \right]\\
		& = W_n.
	\end{align*}
	Taking expectations in \eqref{eq:overlap bound}, and using that $ \mathbbm{1}_{\{\gamma_1 = w =\varphi_1\}} $ is measurable with respect to $\sigma \left(\gamma_0^{K^\prime}, \varphi_0^{N^\prime}\right)$, we get for the second summand in \eqref{eq:overlap bound} that
	\begin{align}\label{eq:first bound}
		&\notag \be_{\tilde{\mu}\times \mu} \left[ \sum_{w\in A_M} \frac{1}{\eta^2 p_w} \mathbbm{1}_{\{\gamma_1 = w =\varphi_1\}} \mathbbm{1}_{\{K^\prime < \infty\}}  \ov(\gamma_{K^\prime}^{3n},\varphi_{N^\prime}^\infty) \right]
		\\
		&\notag
		=
		\sum_{w\in A_M}  \frac{1}{\eta^2 p_w}
		\be_{\tilde{\mu}\times \mu} \left[  \be_{\tilde{\mu}\times \mu} \left[  \mathbbm{1}_{\{\gamma_1 = w =\varphi_1\}}  \ov(\gamma_{K^\prime}^{3n},\varphi_{N^\prime}^\infty) | \sigma\left(\gamma_0^{K^\prime}, \varphi_0^{N^\prime}\right)  \right] \big| K^\prime<\infty \right] \bp_{\tilde{\mu} \times \mu} \left(K^\prime < \infty\right) \\
		&\notag
		=
		\sum_{w\in A_M}  \frac{1}{\eta^2 p_w}
		\be_{\tilde{\mu}\times \mu} \left[  \mathbbm{1}_{\{\gamma_1 = w =\varphi_1\}}  \be_{\tilde{\mu}\times \mu} \left[ \ov(\gamma_{K^\prime}^{3n},\varphi_{N^\prime}^\infty) | \sigma\left(\gamma_0^{K^\prime}, \varphi_0^{N^\prime}\right)  \right] \big| K^\prime<\infty \right] \bp_{\tilde{\mu} \times \mu} \left(K^\prime < \infty\right) \\
		& \notag
		\leq 
		\sum_{w\in A_M}  \frac{1}{\eta^2 p_w}
		\be_{\tilde{\mu}\times \mu} \left[  \mathbbm{1}_{\{\gamma_1 = w =\varphi_1\}}  W_n | K^\prime<\infty \right] \bp_{\tilde{\mu} \times \mu} \left(K^\prime < \infty\right)
		\\
		& \notag
		= 
		\sum_{w\in A_M}  \frac{W_n}{\eta^2 p_w}
		\be_{\tilde{\mu}\times \mu} \left[  \mathbbm{1}_{\{\gamma_1 = w =\varphi_1\}}  \mathbbm{1}_{\{K^\prime < \infty\}}   \right]
		= 
		\sum_{w\in A_M}  \frac{W_n}{\eta^2 p_w}
		\bp_{\tilde{\mu}\times \mu} \left( \gamma_1=w=\varphi_1  \right)
		\\
		&=
		\frac{W_n}{\eta^2}
		\sum_{w\in A_M}  \frac{1}{p_w}
		\psi_M(w)^2
		=
		\frac{W_n}{\eta^2}
		\sum_{w\in A_M} \frac{1}{p_w} \left(\frac{p_w}{\sum_{y\in A_M} p_y}\right)^2 
		= \frac{W_n}{\eta^2 \sum_{y\in A_M} p_y} \leq \frac{W_n}{4},
	\end{align}
	where we used \eqref{eq:Mconditions} for the last inequality.
	For the last summand in \eqref{eq:overlap bound}, a similar argument as in \hyperlink{targetcase1}{Case 1} above shows that
	\begin{align}\label{eq:second bound}
		&\notag \be_{\tilde{\mu} \times \mu} \left[\frac{1}{\eta^2}  \ov \left(\gamma_{K^\prime}^{3n},\varphi_{N^\prime}^\infty\right) \mathbbm{1}_{\{K^\prime < \infty \}}\right]\\
		&\notag
		=
		\be_{\tilde{\mu} \times \mu} \left[\frac{1}{\eta^2} \be_{\tilde{\mu} \times \mu} \left[ \ov \left(\gamma_{K^\prime}^{3n},\varphi_{N^\prime}^\infty\right) | \sigma \left(\gamma_0^{K^\prime}, \varphi_0^{N^\prime}\right) \right]  \Big| K^\prime < \infty \right] \bp_{\tilde{\mu} \times \mu} \left(K^\prime < \infty\right) \\
		&
		\leq
		\be_{\tilde{\mu} \times \mu} \left[\frac{1}{\eta^2} W_n \big| K^\prime < \infty \right]
		\bp_{\tilde{\mu} \times \mu} \left(K^\prime < \infty\right)
		=
		\frac{W_n}{\eta^2}
		\bp_{\tilde{\mu} \times \mu} \left(K^\prime < \infty\right)
		\leq \frac{W_n}{4},
	\end{align}
	where the last inequality follows by Lemma \ref{lemma:hitting} and \eqref{eq:Mconditions}. Taking expectations in \eqref{eq:overlap bound} and using \eqref{eq:first bound} and \eqref{eq:second bound}, we get that
	\begin{equation*}
		\be_{\tilde{\mu} \times \mu} \left[\ov(\gamma_0^{3n},\varphi_0^\infty)\right] \leq 1 + \frac{W_n}{2};
	\end{equation*}
	This finishes the case $x=\mz$.
	
\end{proof}

\begin{lemma}\label{lemma:hitting}
	Let $\gamma=\left(\gamma_k\right)_{k\in \N_0}, \varphi=\left(\varphi_k\right)_{k\in \N_0}$ be distributed according to the measure $\bp_{\tilde{\mu}\times \mu}$ with $\tilde{\mu},\mu \in M_1$. Assume that
	\begin{equation}\label{eq:assumptioneps}
		\bp_{\tilde{\mu}\times \mu} \left(\langle \varphi_1, e_1 \rangle = z\right) = \bp_{\tilde{\mu}\times \mu} \left(\langle \gamma_1, e_1 \rangle = z\right) \leq \eps
	\end{equation}
	for some $\eps\in (0,10^{-10})$ and all $z\in \Z$.
	Then, for all $x\in \Z^d$ with $x\neq \mz$ one has for the random variable $K\left(x+\gamma_0^{3n},\varphi_0^\infty\right)$ defined above that
	\begin{align}\label{eq:hitting different point}
		\bp_{\tilde{\mu}\times \mu}\left(K\left(x+\gamma_0^{3n},\varphi_0^\infty\right) < \infty \right) \leq & \bp_{\tilde{\mu}\times \mu} \left(\exists k,m\in \N_0: x+\gamma_k=\varphi_m \right) \leq 25004\eps^{0.05}  .
	\end{align}
	Furthermore, when both paths start at the origin one has for the random variable $K^\prime\left(\gamma_0^{3n},\varphi_0^\infty\right)$ defined above that
	\begin{align}\label{eq:hitting same point}
		\bp_{\tilde{\mu}\times \mu} \left( K^\prime \left(\gamma_0^{3n},\varphi_0^\infty\right) < \infty \right) \leq & \bp_{\tilde{\mu}\times \mu} \left(\exists k,m\in \N: \gamma_k=\varphi_m \right) \leq 25004\eps^{0.05}  .
	\end{align}
\end{lemma}

\begin{proof}
	We only show \eqref{eq:hitting different point}. The proof of \eqref{eq:hitting same point} follows by the same argument with union bounds over $\N_0$ replaced by union bounds over $\N$.
	We have that
	\begin{align*}
		\bp_{\tilde{\mu}\times \mu} \left(\exists k,m\in \N_0: x+\gamma_k=\varphi_m \right)
		&
		=
		\bp_{\tilde{\mu}\times \mu} \left(\exists k,m\in \N_0: \gamma_k=-x+\varphi_m \right)\\
		\bp_{\tilde{\mu}\times \mu} \left(\exists k,m\in \N_0: -x+\varphi_m =  \gamma_k \right)
		&
		=
		\bp_{\mu \times \tilde{\mu}} \left(\exists k,m\in \N_0: -x+\gamma_k=\varphi_m \right).
	\end{align*}
	So by symmetry in the measures $\mu$ and $\tilde{\mu}$ we can assume that $\langle x, e_1\rangle \geq 0$. Using that $\varphi_m$ and $\gamma_k$ are independent, respectively that $\varphi_1$ and $\varphi_{m}-\varphi_1$ are independent, we get for any $k \in \N_{0}, m \in \N_{>0}$ that
	\begin{align*}
		&\bp_{\tilde{\mu}\times \mu} \left( x+\gamma_k=\varphi_m \right)
		\leq
		\bp_{\tilde{\mu}\times \mu} \left( \langle x+\gamma_k, e_1 \rangle= \langle \varphi_m, e_1 \rangle \right)
		\leq
		\sup_{z\in \Z} \bp_{\tilde{\mu}\times \mu} \left( z= \langle \varphi_m, e_1 \rangle \right)\\
		&
		=
		\sup_{z\in \Z} \bp_{\tilde{\mu}\times \mu} \left( z= \langle \varphi_1, e_1 \rangle + \langle \varphi_{m}-\varphi_1, e_1 \rangle \right)
		\leq
		\sup_{z\in \Z} \bp_{\tilde{\mu}\times \mu} \left( z= \langle \varphi_1, e_1 \rangle \right)
		\leq \eps
	\end{align*}
	by \eqref{eq:assumptioneps}. By symmetry, we also have the same result for $m \in \N_{0}, k\in \N_{>0}$. As $x\neq \mz$, we thus get that for all $k,m \in \N_0$ that 
	\begin{equation*}
		\bp_{\tilde{\mu}\times \mu} \left( x+\gamma_k=\varphi_m \right) \leq \eps.
	\end{equation*}
	By a union bound over $k=0,\ldots,\lfloor\eps^{-3/5} \rfloor$, we get for all $m \in \N_{0}$ that
	\begin{equation}\label{eq:small k}
		\bp_{\tilde{\mu}\times \mu}\left(\varphi_m=x+\gamma_k \text{ for some $k \in \{0,\ldots, \lfloor\eps^{-3/5} \rfloor \}$}\right) \leq 2 \eps^{-3/5} \eps = 2 \eps^{2/5}.
	\end{equation}
	The inner products of the form $\langle \varphi_t, e_1 \rangle, \langle \gamma_t, e_1 \rangle$ are non-decreasing sequences in $t$. So in particular if for some $m \leq \lfloor \eps^{-1/5} \rfloor$ there exists $k > \lfloor\eps^{-3/5} \rfloor$ for which $\varphi_m=x+\gamma_k$, then
	\begin{equation*}
		\langle \gamma_{\lfloor\eps^{-3/5} \rfloor}, e_1 \rangle
		\leq
		\langle \gamma_k, e_1 \rangle 
		\leq
		\langle x+\gamma_k, e_1 \rangle 
		=
		\langle \varphi_m, e_1 \rangle 
		\leq 
		\langle \varphi_{\lfloor \eps^{-1/5} \rfloor}, e_1 \rangle,
	\end{equation*}
	where we also used the assumption $\langle x, e_1 \rangle \geq 0$. Using this fact we get that for all $m \leq \lfloor \eps^{-1/5} \rfloor$
	\begin{align}\label{eq:insert eps}
		&\bp_{\tilde{\mu}\times \mu}\left(\varphi_m=x+\gamma_k \text{ for some $k > \lfloor\eps^{-3/5} \rfloor$}\right)\\
		& \notag
		\leq
		\bp_{\tilde{\mu}\times \mu}\left(\langle \varphi_m, e_1 \rangle = \langle x+\gamma_k, e_1 \rangle \text{ for some $k > \lfloor\eps^{-3/5} \rfloor$}\right)
		\leq 
		\bp_{\tilde{\mu}\times \mu}\left(\langle \varphi_{\lfloor \eps^{-1/5} \rfloor}, e_1 \rangle \geq \langle \gamma_{\lfloor\eps^{-3/5} \rfloor}, e_1 \rangle \right) .
	\end{align}
	The random variables
	\begin{equation*}
		\langle \varphi_{\lfloor \eps^{-1/5} \rfloor}, e_1 \rangle \text{ and }
		\left(\langle \gamma_{(3k+1)\lfloor \eps^{-1/5} \rfloor} - \gamma_{3k\lfloor \eps^{-1/5} \rfloor}, e_1 \rangle\right)_{k\in \N_{0}}
	\end{equation*}
	are independent and identically distributed under the measure $\bp_{\tilde{\mu}\times \mu}$, which follows directly from the definition of the measure $\bp_{\tilde{\mu}\times \mu}$; further, as $\eps \leq 10^{-10}$ we have that
	\begin{equation*}
		\langle \gamma_{\lfloor\eps^{-3/5} \rfloor}, e_1 \rangle \geq \sum_{k=0}^{\lfloor \eps^{-0.3} \rfloor} \langle \gamma_{(3k+1)\lfloor \eps^{-1/5} \rfloor} - \gamma_{3k\lfloor \eps^{-1/5} \rfloor}, e_1 \rangle.
	\end{equation*}
	Using this observation, we continue \eqref{eq:insert eps} by
	\begin{align*}
		& \bp_{\tilde{\mu}\times \mu}\left(\varphi_m=x+\gamma_k \text{ for some $k > \lfloor\eps^{-3/5} \rfloor$}\right)
		\leq 
		\bp_{\tilde{\mu}\times \mu}\left(\langle \varphi_{\lfloor \eps^{-1/5} \rfloor}, e_1 \rangle \geq \langle \gamma_{\lfloor\eps^{-3/5} \rfloor}, e_1 \rangle \right)\\
		&
		\leq
		\bp_{\tilde{\mu}\times \mu}\left(\langle \varphi_{\lfloor \eps^{-1/5} \rfloor}, e_1 \rangle \geq \sum_{k=0}^{\lfloor \eps^{-0.3} \rfloor} \langle \gamma_{(3k+1)\lfloor \eps^{-1/5} \rfloor} - \gamma_{3k\lfloor \eps^{-1/5} \rfloor}, e_1 \rangle \right) \leq \frac{1}{\lfloor \eps^{-0.3} \rfloor +1} \leq \eps^{0.3}.
	\end{align*}
	Here, we used the general principle that for i.i.d. positive random variables $A,B_1,\ldots,B_K$ one has $\p\left(A \geq B_1+\ldots+B_K\right) \leq \p\left(A > \max( B_1,\ldots,B_K) \right) \leq K^{-1}$. Combining the previous display with \eqref{eq:small k}, we get for $m \in \{0,1,\ldots, \lfloor \eps^{-1/5} \rfloor \}$ that
	\begin{equation}\label{eq:small n}
		\bp_{\tilde{\mu}\times \mu}\left(\varphi_m=x+\gamma_k \text{ for some $k \in \N_0$}\right) \leq \eps^{0.3} + 2\eps^{2/5} \leq 3 \eps^{0.3}.
	\end{equation}
	For each fixed $\ell \in \Z$, there are at most three $k\in \N_0$ with $\langle x+\gamma_k, e_1 \rangle = \ell$. Thus, for $m > \lfloor \eps^{-1/5} \rfloor$, we get through conditioning on $\langle \varphi_m, e_1 \rangle$ and $\gamma_0^\infty$ that
	\begin{align}\label{eq:insertisup}
		& \notag \bp_{\tilde{\mu}\times \mu}\left(\varphi_m=x+\gamma_k \text{ for some $k \in \N_0$}\right)\\
		&
		\notag
		=
		\be_{\tilde{\mu} \times \mu} \left[  \bp_{\tilde{\mu}\times \mu}\left(\varphi_m=x+\gamma_k \text{ for some $k \in \N_0$} | \gamma_0^\infty , \langle \varphi_m, e_1 \rangle \right) \right]\\
		&
		\leq
		3 \sup \bp_{\tilde{\mu}\times \mu}\left(\varphi_m=z  | (\varphi_{3j+1}-\varphi_{3j})_{j\in \N_0} \right)
	\end{align}
	where the last supremum is over all $z\in \Z^d$ and $A_M$-valued sequences $ (\varphi_{3j+1}-\varphi_{3j})_{j\in \N_0}$. By construction, the random variables of the form $(\varphi_{3j+2}-\varphi_{3j+1})$ and $(\varphi_{3j+3}-\varphi_{3j+2})$ are independent of $(\varphi_{3j+1}-\varphi_{3j})_{j\in \N_0}$. Thus
	\begin{align}\label{eq:insertisup2}
		&\notag \sup \bp_{\tilde{\mu}\times \mu}\left(\varphi_m=z  | (\varphi_{3j+1}-\varphi_{3j})_{j\in \N_0} \right)\\
		\leq
		&
		\sup_{z\in \Z^d} \bp_{\tilde{\mu}\times \mu}\left( \sum_{j=0}^{\lfloor \frac{m-2}{3} \rfloor} (\varphi_{3j+2}-\varphi_{3j+1})
		+
		\sum_{j=0}^{\lfloor \frac{m-3}{3} \rfloor} (\varphi_{3j+3}-\varphi_{3j+2})  =z  \right).
	\end{align}
	The increments of the form $\varphi_{3j+3}-\varphi_{3j+2}$ take values in $\{v^+, v^-\}$ and the increments of the form $\varphi_{3j+2}-\varphi_{3j+1}$ take values in $\{u^+, u^-\}$
	As the two vectors $(u_2,\ldots,u_d)$ and $(v_2,\ldots,v_d) \in \Z^{d-1}$ are linearly independent, for each $z\in \Z^d$ there can exist at most one pair $(h,\ell)\in \{0,\ldots,\lfloor \frac{m-2}{3} \rfloor\}\times \{0,\ldots, \lfloor \frac{m-3}{3} \rfloor\}$ such that the equation
	\begin{equation*}
		h\cdot u^+ + \left(\Big\lfloor \frac{m-2}{3} \Big\rfloor+1-h\right)\cdot u^- + \ell \cdot v^+ + \left(\Big\lfloor \frac{m-3}{3} \Big\rfloor+1-\ell\right) \cdot v^- = z
	\end{equation*}
	holds. This implies that if, for fixed $z\in \Z^d$, it is known that
	\begin{equation*}
		\sum_{j=0}^{\lfloor \frac{m-2}{3} \rfloor} (\varphi_{3j+2}-\varphi_{3j+1})
		+
		\sum_{j=0}^{\lfloor \frac{m-3}{3} \rfloor} (\varphi_{3j+3}-\varphi_{3j+2})  =z ,
	\end{equation*}
	then one can reconstruct the values of the individual sums $\sum_{j=0}^{\lfloor \frac{m-2}{3} \rfloor} (\varphi_{3j+2}-\varphi_{3j+1})$ and $\sum_{j=0}^{\lfloor \frac{m-3}{3} \rfloor} (\varphi_{3j+3}-\varphi_{3j+2})$.
	As the two collections of random variables  $(\varphi_{3j+3}-\varphi_{3j+2})_{j\in \N_0}$ and $(\varphi_{3j+2}-\varphi_{3j+1})_{j\in \N_0}$ are independent under the measure $\bp_{\tilde{\mu}\times \mu}$, we thus get that
	\begin{align*}
		& \sup_{z\in \Z^d} \bp_{\tilde{\mu}\times \mu}\left( \sum_{j=0}^{\lfloor \frac{m-2}{3} \rfloor} (\varphi_{3j+2}-\varphi_{3j+1})
		+
		\sum_{j=0}^{\lfloor \frac{m-3}{3} \rfloor} (\varphi_{3j+3}-\varphi_{3j+2})  =z  \right)\\
		=&
		\left(\sup_{z\in \Z^d} \bp_{\tilde{\mu}\times \mu}\left( \sum_{j=0}^{\lfloor \frac{m-2}{3} \rfloor} (\varphi_{3j+2}-\varphi_{3j+1})  =z  \right)\right)
		\left(\sup_{z\in \Z^d} \bp_{\tilde{\mu}\times \mu}\left( \sum_{j=0}^{\lfloor \frac{m-3}{3} \rfloor} (\varphi_{3j+3}-\varphi_{3j+2})  =z  \right)\right) .
	\end{align*}
	Using that
	\begin{align*}
		& \sup_{z \in \Z^d} \bp_{\tilde{\mu}\times \mu} \left( \sum_{j=0}^{k} \left(\varphi_{3j+2}-\varphi_{3j+1}\right)  = z \right) \leq 100 (k+1)^{-\frac{\log(2)}{\log(3)}} \text{ and }\\
		& \sup_{z \in \Z^d} \bp_{\tilde{\mu}\times \mu} \left( \sum_{j=0}^{k} \left(\varphi_{3j+3}-\varphi_{3j+2}\right)  = z \right) \leq 100 (k+1)^{-\frac{\log(2)}{\log(3)}}
	\end{align*}
	by construction, we can insert these inequalities into the above line and obtain that
	\begin{align}\label{eq:insertisup3}
		\notag
		& \sup_{z\in \Z^d} \bp_{\tilde{\mu}\times \mu}\left( \sum_{j=0}^{\lfloor \frac{m-2}{3} \rfloor} (\varphi_{3j+2}-\varphi_{3j+1})
		+
		\sum_{j=0}^{\lfloor \frac{m-3}{3} \rfloor} (\varphi_{3j+3}-\varphi_{3j+2})  =z  \right) \\
		&
		\leq 
		\left(100 \Big\lfloor \frac{m+1}{3} \Big\rfloor^{-\frac{\log(2)}{\log(3)}} \right)
		\left(100 \Big\lfloor \frac{m}{3} \Big\rfloor^{-\frac{\log(2)}{\log(3)}} \right)
		\leq \left(400 m^{-\frac{\log(2)}{\log(3)}}\right)^2 \leq 1600m^{-5/4}.
	\end{align}
	Combining the three inequalities \eqref{eq:insertisup}, \eqref{eq:insertisup2}, and \eqref{eq:insertisup3}, we get that
	\begin{align}\label{eq:big n}
		\bp_{\tilde{\mu}\times \mu}\left(\varphi_m=x+\gamma_k \text{ for some $k \in \N_0$}\right) \leq 5000 m^{-5/4}.
	\end{align}
	Remember that this inequality holds for $m > \lfloor \eps^{-1/5} \rfloor$.
	Combining the upper bound for small $m$ ($m\leq \lfloor \eps^{-1/5} \rfloor$) \eqref{eq:small n} and the upper bound for big $m$ ($m> \lfloor \eps^{-1/5} \rfloor$) \eqref{eq:big n}, we get via a union bound that
	\begin{align*}
		& \bp_{\tilde{\mu}\times \mu}\left(\varphi_m=x+\gamma_k \text{ for some $k,m \in \N_0$}\right) \leq \sum_{m=0}^{\lfloor \eps^{-1/5} \rfloor} 3 \eps^{0.3} + \sum_{m=\lfloor \eps^{-1/5} \rfloor + 1}^{\infty} 5000 m^{-5/4}\\
		& \leq 4 \eps^{0.1} + \int_{\eps^{-1/5}/2}^{\infty} 5000s^{-5/4} \md s
		= 4 \eps^{0.1} + 20000 \left(\eps^{-1/5}/2\right)^{-1/4}
		\leq 4 \eps^{0.1} + 25000 \eps^{0.05} \leq 25004 \eps^{0.05}.
	\end{align*}
\end{proof}

\subsection{Proof of Theorem \ref{theo:isotropic}: Measures on random paths}\label{sec:proof isot}

In this section, we construct two sets of measures $M_2$, respectively $M_3$, which are measures on self-avoiding walks starting at the origin in $\Z^d$ for $d\geq 4$, respectively $\Z^3$. The set of measures $M_2$ is used for the proof of Theorem \ref{theo:isotropic} in dimensions $d\geq 4$ and is constructed in Section \ref{subsubsec:random paths 4 or higher}. The set of measures $M_3$ is used for the proof of Theorem \ref{theo:isotropic} in dimension $d = 3$ and is constructed in Section \ref{subsubsec:random paths 3}.

\subsubsection{Random paths in dimensions $d \geq 4$}\label{subsubsec:random paths 4 or higher}

In this section, we prove Theorem \ref{theo:isotropic} for dimensions $d\geq 4$. Throughout this section, we will always assume that $(p_x)_{x\in \Z^d\setminus \{\mz\}}$ satisfies \eqref{eq:irreduc}, \eqref{eq:isotropy}, and $\sum_{x\in \Z^d\setminus\{\mz\}}p_x \in (T(d),\infty)$.
Define the four vectors $w_1,\ldots,w_4 \in \{0,1\}^4$ by
\begin{align*}
	&\langle w_1, e_i \rangle = \begin{cases}
		1 & \text{ if } i \in  \{1, \ldots, \lfloor \frac{d}{4} \rfloor\}\\
		0 & \text{ otherwise}
	\end{cases},\\
	&\langle w_2, e_i \rangle = \begin{cases}
		1 & \text{ if } i \in  \{\lfloor \frac{d}{4} \rfloor + 1, \ldots, 2\lfloor \frac{d}{4} \rfloor\}\\
		0 & \text{ otherwise}
	\end{cases},\\
	&\langle w_3, e_i \rangle = \begin{cases}
		1 & \text{ if } i \in  \{2\lfloor \frac{d}{4} \rfloor + 1, \ldots, 3\lfloor \frac{d}{4} \rfloor\}\\
		0 & \text{ otherwise}
	\end{cases}, \text{ and }\\
	&\langle w_4, e_i \rangle = \begin{cases}
		1 & \text{ if } i \in  \{3\lfloor \frac{d}{4} \rfloor + 1, \ldots, d \}\\
		0 & \text{ otherwise}
	\end{cases}.\\
\end{align*}
Note that $w_1+w_2+w_3+w_4$ is the all-ones vector and that each of $w_1,w_2,w_3,w_4$ contains at least $\lfloor \frac{d}{4} \rfloor \geq \frac{d}{7}$ many `one-entries'. Through these four vectors, we define the four sets $A_1,A_2,A_3,A_4 \subset \Z^d$ by
\begin{align*}
	& A_k = \left\{x\in \Z^d : \langle x, w_k \rangle \neq 0, \langle x, w_4 \rangle \geq 0 \right\} \text{ for } k=1,2,3, \text{ and }\\
	& A_4 =  \left\{x\in \Z^d : \langle x, w_4 \rangle > 0 \right\} .
\end{align*}
We start with a proof of the following elementary claim.

\begin{claim}\label{claim:3.4}
	For each of the sets $A_1,A_2,A_3$, and $A_4$ one has
	\begin{equation}\label{eq:}
		\sum_{x\in A_k} p_x \geq \frac{T(d)}{28} .
	\end{equation}
\end{claim}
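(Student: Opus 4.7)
The plan is to exploit the isotropy assumption \eqref{eq:isotropy} by averaging over the hyperoctahedral group $G$ of signed permutations of $\{1,\ldots,d\}$, which has $|G| = 2^d \, d!$ elements. Write a generic element as $g = (\epsilon, \pi)$, so $(gx)_i = \epsilon_i x_{\pi(i)}$. By \eqref{eq:isotropy}, $p_{gx} = p_x$ for every $g \in G$ and $x \in \Z^d$, so a standard reindexing gives, for $g$ uniform on $G$,
\begin{equation*}
\sum_{x \in A_k} p_x = \sum_{x \in \Z^d \setminus \{\mz\}} p_x \cdot \Pr(g x \in A_k).
\end{equation*}
Since $\sum_{x \neq \mz} p_x > T(d)$, it suffices to show $\Pr(gx \in A_k) \geq 1/28$ for every $x \neq \mz$ and every $k \in \{1,2,3,4\}$.

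Fix such an $x$ with support $S(x) = \{i : x_i \neq 0\}$, and let $I_k \subset \{1,\ldots,d\}$ denote the support of $w_k$. Picking any $j_0 \in S(x)$, the image $\pi^{-1}(j_0)$ is uniform on $\{1,\ldots,d\}$, so
\begin{equation*}
\Pr\!\left(\pi^{-1}(S(x)) \cap I_k \neq \emptyset\right) \;\geq\; \Pr\!\left(\pi^{-1}(j_0) \in I_k\right) \;=\; \frac{|I_k|}{d} \;\geq\; \frac{1}{7},
\end{equation*}
where the last inequality is a direct check for $d \geq 4$ using $|I_k| \geq \lfloor d/4 \rfloor$ (and $|I_4| \geq \lceil d/4 \rceil$). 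On this nonempty-intersection event, conditional on $\pi$, the sum $\langle gx, w_k \rangle = \sum_{i \in I_k} \epsilon_i x_{\pi(i)}$ has at least one nonzero coefficient; after absorbing the signs of the $x_{\pi(i)}$ into the i.i.d.\ uniform signs $\epsilon_i$, this becomes a sum of the form $\sum_i a_i Y_i$ in Lemma \ref{lem1} with $a_i = |x_{\pi(i)}| \in \N_0$ not all zero.

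For $k = 4$, Lemma \ref{lem1} \eqref{eq:one quarter bound} yields $\Pr(\langle gx, w_4 \rangle > 0 \mid \pi) \geq 1/4$ on the above event, so $\Pr(gx \in A_4) \geq \tfrac{1}{7} \cdot \tfrac{1}{4} = \tfrac{1}{28}$ (in fact $|I_4|/d \geq 1/4$ gives the stronger bound $1/16$, but $1/28$ suffices). For $k \in \{1,2,3\}$ the event $\{gx \in A_k\}$ imposes two conditions, $\langle gx, w_k \rangle \neq 0$ and $\langle gx, w_4 \rangle \geq 0$. Since $I_k \cap I_4 = \emptyset$ by construction, these depend on disjoint sign coordinates and are therefore \emph{conditionally independent given} $\pi$. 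Lemma \ref{lem1} \eqref{eq:one quarter bound1} gives $\Pr(\langle gx, w_k \rangle \neq 0 \mid \pi) \geq 1/2$ on the nonempty-intersection event, while \eqref{eq:one quarter bound2} gives $\Pr(\langle gx, w_4 \rangle \geq 0 \mid \pi) \geq 1/2$ unconditionally; their product yields $\Pr(gx \in A_k \mid \pi) \geq 1/4$, and combining with the permutation bound gives $\Pr(gx \in A_k) \geq 1/28$. No serious obstacle arises; the only real care is recognising that the disjointness $I_k \cap I_4 = \emptyset$ supplies the crucial conditional independence of the two sign-sum events, which is what saves a factor of $2$ compared with a naive union-type argument.
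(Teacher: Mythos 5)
Your proof is correct and follows essentially the same route as the paper: the paper phrases the reduction via a random vector $Z$ with mass function $p_z/\sum_x p_x$ rather than by averaging over the hyperoctahedral group, but these are equivalent, and both proofs then combine the $|I_k|/d \geq 1/7$ bound with Lemma~\ref{lem1} applied to the conditionally i.i.d.\ signs, exploiting the disjointness of the supports of $w_k$ and $w_4$. Your explicit conditioning on $\pi$ makes the conditional-independence step slightly more transparent than the paper's conditioning on the $\sigma$-algebra of absolute values, but the mathematical content is identical.
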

\begin{proof}
	Let $Z$ be a $\Z^d$-valued random variable with probability mass function $\p(Z=z)=\frac{p_z}{\sum_{x\in \Z^d} p_x}$. Thus we need to show that $\p(Z\in A_k)\geq \tfrac{1}{28}$ for $k=1,2,3,4$. For $k\in \{1,2,3,4\}$, let $I_k\subset \{1,\ldots,d\}$ be the set of indices $i \in \{1,\ldots,d\}$ for which $\langle e_i, w_k \rangle \neq 0$. As $Z\neq \mz$, there exists $i\in \{1,\ldots,d\}$ with $\langle e_i,Z\rangle \neq 0$. Symmetry in the different coordinates yields
	\begin{equation*}
		\p(\langle e_i, Z \rangle \neq 0 \text{ for some $i\in I_k$}) \geq \frac{|I_k|}{d} \geq \frac{1}{7}.
	\end{equation*}
	We write $\langle Z, w_k \rangle = \sum_{i\in I_k} \sigma_i |\langle e_i , Z \rangle|$ with $\sigma_i = \text{sgn}(\langle e_i , Z \rangle)$. Conditioned on the event that $|\langle e_i , Z \rangle|=\ell$ for some $\ell \in \N_{>0}$, the random variable $\langle e_i , Z \rangle$ is equally likely to be either $+\ell$ or $-\ell$. So given the $\sigma$-algebra $ \cF=\sigma\left( (|\langle e_i , Z \rangle|)_{i\in \{1,\ldots,d\}}  \right)$, the random variables $(\sigma_i)_{i\in \{1,\ldots,d\}}$ are independent and uniformly distributed on $\{-1,+1\}$. In particular, this allows us to apply the results of Lemma \ref{lem1} for the sum $\sum_{i\in I_k} \sigma_i |\langle e_i , Z \rangle|$.
	Applying this to $k=1,2,3$, we get that
	\begin{align*}
		&\p(Z\in A_k) = \p(\langle Z, w_k \rangle \neq 0, \langle Z, w_4 \rangle \geq 0|\langle e_i, Z \rangle \neq 0 \text{ for some $i\in I_k$}) \p(\langle e_i, Z \rangle \neq 0 \text{ for some $i\in I_k$})\\
		& \geq \p(\langle Z, w_k \rangle \neq 0 |\langle e_i, Z \rangle \neq 0 \text{ for some $i\in I_k$})
		\cdot
		\p(\langle Z, w_4 \rangle \geq 0 |\langle e_i, Z \rangle \neq 0 \text{ for some $i\in I_k$}) \cdot \frac{1}{7}\\
		& \geq \frac{1}{2} \cdot \frac{1}{2} \cdot \frac{1}{7} = \frac{1}{28},
	\end{align*}
	where we used \eqref{eq:one quarter bound1} and \eqref{eq:one quarter bound2} for the last inequality. For the case $k=4$, we get that
	\begin{align*}
		& \p(Z\in A_4) = \p( \langle x, w_4 \rangle > 0|\langle e_i, Z \rangle \neq 0 \text{ for some $i\in I_4$}) \p(\langle e_i, Z \rangle \neq 0 \text{ for some $i\in I_4$})\\
		& \geq \frac{1}{4} \cdot \frac{1}{7} = \frac{1}{28},
	\end{align*}
	where we used \eqref{eq:one quarter bound} for the last inequality.
\end{proof}

On the set $A_k$, we define the probability measure $\psi_k$ by
\begin{equation*}
	\psi_k(z)= \frac{p_z}{\sum_{x\in A_k} p_x}.
\end{equation*}
We write $\mu$ for the probability measure on infinite sequences $(\zeta_n)_{n\in \N_0} \in (\Z^d)^{\N_0}$ for which the increments $(\zeta_{k+1}-\zeta_k)_{k\in \N_0}$ are independent random variables such that
\begin{align*}
	\zeta_0=\mz \text{ almost surely and }  \mu \left( \zeta_{k+1}-\zeta_k = z \right) = 
	\begin{cases}
		\psi_{4}(z) & \text{ if $k \in \{0,2,4\} \mod 6$},\\
		\psi_{1}(z) & \text{ if $k = 1 \mod 6$},\\
		\psi_{2}(z) & \text{ if $k = 3 \mod 6$},\\
		\psi_{3}(z) & \text{ if $k = 5 \mod 6$},\\
	\end{cases}
\end{align*}
for $z\in \Z^d$. For such a path $(\zeta_k)_{k\in \N_0}$, one has by construction that $\langle \zeta_{k+1}-\zeta_k, w_4 \rangle \geq 0$ for all $k\in \N_0$, with a strict inequality for even $k$. So in particular this implies that the path $(\zeta_k)_{k\in \N_0}$ is almost surely self-avoiding. Let $M_2$ be the set of $6$ different probability measures which are the distributions of the sequence $(\zeta_k-\zeta_N)_{k\geq N}$ for $N\in \N_0$. Note that this distribution does only depend on $N\mod 6$, so there are only $6$ such distributions. For $z\in \Z^d$ and an infinite path $\varphi=(\varphi_0,\varphi_1,\ldots)$ starting at the origin, we define a path $(z\varphi):\{-1,0,1,\ldots\} \to \Z^d$ by
\begin{equation*}
	(z\varphi)_{-1}^{\infty} =(\varphi_{-1},\varphi_0,\varphi_1,\ldots) \coloneqq (z,\varphi_0,\varphi_1,\ldots).
\end{equation*}
Note that this path is not necessarily self-avoiding as it can be possible that $z=\varphi_1$ or $z=\varphi_0=\mz$. However, in this case, we do {\sl not} count the edge twice in the definition of the overlap, i.e., if $z=\varphi_1$ or $z=\mz$ we have nevertheless that
\begin{equation*}
	\ov(x+\gamma_0^n, (z\varphi)_{-1}^{\infty})= \ov(x+\gamma_0^n, \varphi_{0}^{\infty})
\end{equation*}
for any path $x+\gamma_0^n$.
The necessity of the additional parameter $z\in \Z^d$ for the proof will be discussed below.
We now define the number
\begin{equation*}
	W_{n} \coloneqq \sup_{\hat{\mu},\tilde{\mu}\in M_2, x \in \Z^d, z \in \Z^d} \be_{\hat{\mu} \times \tilde{\mu}}
	\left[\ov(x+\gamma_0^n,(z\varphi)_{-1}^{\infty})\right].
\end{equation*}
By truncation, we can without loss of generality assume that $\inf \{p_x : p_x>0\} >0$, which yields that $W_n<\infty$. In Lemma \ref{lem:Wn def 2} below, we derive a bootstrapping-type inequality for $W_n$, which implies that $W_n \leq \tfrac{3}{2} < \infty$. As this bound is uniform in $n$, we thus get that
\begin{equation*}
	\be_{\mu \times \mu} \left[\ov(\gamma_0^\infty, \varphi_0^\infty)\right] 
	= 
	\sup_{n\in \N} \be_{\mu \times \mu} \left[\ov(\gamma_0^n, \varphi_0^n)\right] \leq \sup_{n\in \N} W_n \leq \frac{3}{2},
\end{equation*}
and Proposition \ref{prop:measure on paths} implies that
\begin{equation*}
	\p(|K_\mz|=\infty) \geq \be_{\mu \times \mu} \left[\ov(\gamma_0^\infty, \varphi_0^\infty)\right]^{-1} \geq \frac{2}{3} > 1-\frac{1}{e}.
\end{equation*}

\subsubsection{Random paths in dimension $d = 3$}\label{subsubsec:random paths 3}

For $i\in \N$, let $(X_1^i, X_2^i, X_3^i)$ be a $\Z^3$-valued random variable supported on the set $A_1=\left\{x\in \Z^3: \langle x,e_1 \rangle > 0, \langle x,e_3 \rangle \geq 0\right\}$ with probability mass function
\begin{equation*}
	\p((X_1^i, X_2^i, X_3^i)=x)= \frac{p_x}{\sum_{y\in A_1} p_y} .
\end{equation*}
Furthermore, let $((X_1^i, X_2^i, X_3^i))_{i\in \N}$ be independent. Let $((Y_1^i, Y_2^i, Y_3^i))_{i\in \N}$ be i.i.d. random variables supported on the set $A_2=\left\{x\in \Z^3: \langle x,e_2 \rangle \neq 0, \langle x,e_3 \rangle \geq 0\right\}$ with probability mass function
\begin{equation*}
	\p((Y_1^i, Y_2^i, Z_3^i)=x)= \frac{p_x}{\sum_{y\in A_2} p_y} ,
\end{equation*}
and let $((Z_1^i, Z_2^i, Z_3^i))_{i\in \N}$ be i.i.d. random variables supported on the set $A_3=\left\{x\in \Z^3: \langle x,e_3 \rangle > 0\right\}$ with probability mass function
\begin{equation*}
	\p((Z_1^i, Z_2^i, Z_3^i)=x)= \frac{p_x}{\sum_{y\in A_3} p_y} .
\end{equation*}
First, we prove the following elementary claim.

\begin{claim}\label{claim:a sets}
	For each of the sets $A_1, A_2$, and $A_3$ one has
	\begin{equation}\label{eq:sets 1-2-3 in dim 3}
		\sum_{x\in A_k} p_x \geq \frac{T(3)}{12}.
	\end{equation}
\end{claim}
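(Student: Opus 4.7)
The plan is to normalize by setting $S := \sum_{x \in \Z^3 \setminus \{\mz\}} p_x \in (T(3),\infty)$ and considering the $\Z^3$-valued random variable $Z$ with $\p(Z = z) = p_z/S$. Then $\sum_{x \in A_k} p_x = S \cdot \p(Z\in A_k)$, so it suffices to show that $\p(Z \in A_k) \geq 1/12$ for each $k \in \{1,2,3\}$.

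The key input I will extract from the isotropy assumption \eqref{eq:isotropy} is that $p_x$ depends only on the multiset $\{|x_1|,|x_2|,|x_3|\}$. Consequently, conditional on the $\sigma$-algebra $\mathcal{F} = \sigma(|Z_1|,|Z_2|,|Z_3|)$, the signs of the nonzero coordinates of $Z$ are independent and uniform on $\{-1,+1\}$. Moreover, by the coordinate-permutation part of \eqref{eq:isotropy}, the quantity $\p(Z_i \neq 0)$ is the same for $i=1,2,3$, and since $Z \neq \mz$ we have $1 \leq \sum_{i=1}^{3} \p(Z_i \neq 0) = 3\,\p(Z_1 \neq 0)$, yielding the baseline estimate $\p(Z_i \neq 0) \geq 1/3$ for each $i$.

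With these two ingredients I would handle the three sets in parallel. For $A_3 = \{x : x_3 > 0\}$, the conditional uniformity of the sign of $Z_3$ on $\{Z_3 \neq 0\}$ gives $\p(Z \in A_3) = \tfrac{1}{2}\p(Z_3 \neq 0) \geq 1/6$. For $A_2 = \{x : x_2 \neq 0,\ x_3 \geq 0\}$, the same sign-uniformity applied to $Z_3$ (independently of the sign of $Z_2$, conditionally on $\mathcal{F}$) yields $\p(Z_3 \geq 0 \mid Z_2 \neq 0) \geq 1/2$ and hence $\p(Z \in A_2) \geq \tfrac{1}{2}\p(Z_2 \neq 0) \geq 1/6$. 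The binding case is $A_1 = \{x : x_1 > 0,\ x_3 \geq 0\}$, where splitting on whether $|Z_3| = 0$ or $|Z_3| > 0$ and using conditional sign-independence gives
\begin{equation*}
    \p(Z \in A_1) = \tfrac{1}{4}\,\p(|Z_1| > 0,\ |Z_3| > 0) + \tfrac{1}{2}\,\p(|Z_1| > 0,\ |Z_3| = 0) \geq \tfrac{1}{4}\,\p(Z_1 \neq 0) \geq \tfrac{1}{12}.
\end{equation*}

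The argument is essentially a routine symmetry computation, so I do not anticipate a real obstacle. The only point of care is in the $A_1$-case, where I must not discard the contribution of the event $\{Z_3 = 0\}$: keeping both terms above is what makes the lower bound collapse cleanly to $\tfrac{1}{4}\,\p(Z_1 \neq 0)$ and produces exactly the constant $1/12$ stated in the claim. This mirrors the previous Claim \ref{claim:3.4}, except that the role of the vectors $w_k$ is now played by the single unit vectors $e_1,e_2,e_3$, each of which has support of size $1$; this is why the argument only needs the trivial ``sign-of-a-single-coordinate-is-uniform'' fact and does not need to invoke the full strength of Lemma \ref{lem1}.
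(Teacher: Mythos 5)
Your proposal is correct and follows essentially the same approach as the paper: normalize to a random variable $Z$, observe that $\p(\langle e_i,Z\rangle\neq 0)\geq 1/3$ by coordinate symmetry, and exploit conditional uniformity of the signs given $|Z_1|,|Z_2|,|Z_3|$. The only organizational difference is that the paper computes the bound for $A_1$ (via a chain of conditional probabilities each bounded by $1/2$ or $1/2$) and then disposes of $A_2$ and $A_3$ by noting that under isotropy a coordinate permutation maps $A_1$ into each of them, whereas you compute each of the three sets directly, obtaining the sharper constants $1/6$ for $A_2,A_3$; both routes are valid and yield the same final bound $T(3)/12$.
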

\begin{proof}
	We start with the set $A_1$.
	Let $Z$ be a $\Z^3$-valued random variable with probability mass function $\p(Z=z)=\frac{p_z}{\sum_{x\in \Z^3 \setminus \{\mz\}} p_x}$ for $z\in \Z^3\setminus \{\mz\}$. Thus we need to show that $\p(Z\in A_1)\geq \tfrac{1}{12}$. As $Z\neq \mz$ almost surely, we get by symmetry in the different coordinates that
	\begin{equation*}
		\p(\langle e_i, Z \rangle \neq 0 ) \geq \frac{1}{3} \text{ for } i=1,2,3.
	\end{equation*}
	Conditioned on the event that $|\langle e_i , Z \rangle|=\ell$ for some $\ell \in \N_{>0}$, the random variable $\langle e_i , Z \rangle$ is equally likely to be either $+\ell$ or $-\ell$. Conditioned on $\langle Z, e_1 \rangle \neq 0$ and $\langle Z, e_3 \rangle \geq 0$, the inner product $\langle Z, e_1 \rangle$ is a random variable that is symmetric around $0$. Further,  conditioned on $\langle Z, e_1 \rangle \neq 0$, the random variable $\langle Z, e_3 \rangle$ is also symmetric around $0$.
	Thus we have for the set $A_1$ that
	\begin{align*}
		&\p(Z\in A_1) = \p(\langle Z, e_1 \rangle > 0, \langle Z, e_3 \rangle \geq 0) = \\
		&
		=
		\p(\langle Z, e_1 \rangle > 0 | \langle Z, e_1 \rangle \neq 0, \langle Z, e_3 \rangle \geq 0) \cdot
		\p(\langle Z, e_3 \rangle \geq 0 | \langle Z, e_1 \rangle \neq 0)
		\cdot
		\p(\langle Z, e_1 \rangle \neq 0)\\
		&
		\geq \frac{1}{2\cdot 2 \cdot 3} = \frac{1}{12}.
	\end{align*}
	For $k=2,3$ it is easy to check by \eqref{eq:isotropy} that 
	\begin{equation*}
		\sum_{x\in A_k} p_x \geq \sum_{x\in A_1} p_x \geq \frac{T(3)}{12}.
	\end{equation*}
\end{proof}

\noindent
Let $S=(S_m=\sum_{i=1}^{m} \sigma_i)_{m\in \N_0}$ be the stochastic process constructed in Section \ref{sec:unpredictable}. Recall, that the important properties of this process were $\sigma_i \in \{-1,+1\}$ and the upper bound on its predictability profile \eqref{eq:predict 100 old}, i.e., the inequality $\pre_k(S) \leq 100 k^{-\frac{\log(2)}{\log(3)}}$. Define $((\tilde{X}_1^i, \tilde{X}_2^i, \tilde{X}_3^i))_{i\in \N}$ by 
\begin{equation*}
	(\tilde{X}_1^i, \tilde{X}_2^i, \tilde{X}_3^i) = (\sigma_i X_1^i, X_2^i, X_3^i).
\end{equation*}
Furthermore, let $(S_m)_{m\in \N_0}, (({X}_1^i, {X}_2^i, {X}_3^i))_{i\in \N}, (Y_1^i, Y_2^i, Y_3^i))_{i\in \N}, \text{ and } ((Z_1^i, Z_2^i, Z_3^i))_{i\in \N}$ be independent. 
Let $(\zeta_k)_{k \in \N_0}\in (\Z^3)^{\N_0}$ be the stochastic process defined by
\begin{align}\label{eq:zeta def}
	\zeta_0=\mz \text{ and }  \zeta_{k+1}-\zeta_k = 
	\begin{cases}
		(Z_1^i, Z_2^i, Z_3^i) & \text{ if $k = 2i-2$},\\
		(\tilde{X}_1^i, \tilde{X}_2^i, \tilde{X}_3^i) & \text{ if $k=4i-3$},\\
		(Y_1^i, Y_2^i, Y_3^i) & \text{ if $k = 4i-1$}.
	\end{cases}
\end{align}
Note that for each $k\in \N_0$, exactly one of the conditions above ($k=2i-2, k=4i-3$, or $k=4i-1$) is satisfied, such that the path $(\zeta_k)_{k\in \N_0}$ is uniquely defined by the above rule. Write $\mu$ for the distribution of $(\zeta_k)_{k\in \N_0}$. For such a path $(\zeta_k)_{k\in \N_0}$, one has by construction that $\langle \zeta_{k+1}-\zeta_k, e_3 \rangle \geq 0$ for all $k\in \N_0$, with a strict inequality for even $k$. So in particular this implies that the path $(\zeta_k)_{k\in \N_0}$ is almost surely self-avoiding. Let $M_3$ be the set of probability measures which are the distributions of the sequence $(\zeta_k-\zeta_N)_{k\geq N}$ conditioned on $(\zeta_0,\ldots,\zeta_N)$, for $N\in \N_0$.\\

For $z\in \Z^3$ and an infinite path $(\varphi_0,\varphi_1,\ldots)$ starting at the origin, we define the path
\begin{equation*}
	(z\varphi)_{-1}^{\infty}=(\varphi_{-1},\varphi_0,\varphi_1,\ldots) \coloneqq (z,\varphi_0,\varphi_1,\ldots).
\end{equation*}
The same notation was used in Section \ref{subsubsec:random paths 4 or higher} above. See also the discussion below \eqref{eq:ineq} for the necessity of the parameter $z\in \Z^3$.
We now define the number
\begin{equation*}
	W_{n}^\prime \coloneqq \sup_{\hat{\mu},\tilde{\mu}\in M_3, x,z\in \Z^3} \be_{\hat{\mu} \times \tilde{\mu}}
	\left[\ov(x+\gamma_0^n,(z\varphi)_{-1}^{\infty})\right].
\end{equation*}
By truncation, we can without loss of generality assume that $\inf \{p_x : p_x>0\} >0$, which yields that $W_n^\prime <\infty$. We now derive a bootstrapping-type inequality for $W_n^\prime$, which implies that $\sup_n W_n^\prime \leq \frac{3}{2}$. As this bound is uniform in $n$, we thus get that
\begin{equation*}
	\be_{\mu \times \mu} \left[\ov(\gamma_0^\infty, \varphi_0^\infty)\right] 
	= 
	\sup_{n\in \N} \be_{\mu \times \mu} \left[\ov(\gamma_0^n, \varphi_0^n)\right] \leq \sup_{n\in \N} W_n^\prime \leq \frac{3}{2},
\end{equation*}
and Proposition \ref{prop:measure on paths} implies that
\begin{equation*}
	\p(|K_\mz|=\infty) \geq \be_{\mu \times \mu} \left[\ov(\gamma_0^\infty, \varphi_0^\infty)\right]^{-1} \geq \frac{2}{3} > 1-\frac{1}{e}.
\end{equation*}

\subsection{The bootstrapping lemmas}\label{sec:boots}

In this section, we prove the key lemmas (Lemmas \ref{lem:Wn def 2} and \ref{lem:Wn def 3}) which imply that the expected weighted overlap satisfies $\be_{\mu \times \mu} \left[\ov(\gamma_0^\infty, \varphi_0^\infty)\right] \leq \frac{3}{2} < \infty$. For the proofs of these lemmas, we also need some upper bounds on the probability that two random paths intersect. These upper bounds are proven in Section \ref{subsubsec:intersection 4 or higher} for dimensions $d\geq 4$, respectively in Section \ref{subsubsec:intersection 3} for dimension $d=3$.

\begin{lemma}\label{lem:Wn def 2}
	For the quantity $W_n$ defined by
	\begin{equation}\label{eq:Wn defini}
		W_{n} = \sup_{\hat{\mu},\tilde{\mu}\in M_2, x,z\in \Z^d} \be_{\hat{\mu} \times \tilde{\mu}}
		\left[\ov(x+\gamma_0^n,(z\varphi)_{-1}^{\infty})\right]
	\end{equation}
	one has $W_n \leq \frac{3}{2}$ for all $n\in \N$.
\end{lemma}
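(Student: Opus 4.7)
The goal is to show that for all $\hat\mu,\tilde\mu\in M_2$ and all $x,z\in\Z^d$,
\begin{equation*}
\be_{\hat\mu\times\tilde\mu}\bigl[\ov(x+\gamma_0^n,(z\varphi)_{-1}^\infty)\bigr]\leq 1+\tfrac{W_n}{3},
\end{equation*}
which immediately yields $W_n\leq\tfrac{3}{2}$. The argument follows the bootstrapping template of Lemma \ref{lem:Wn def 1}, with two modifications: the period of the step structure is now $6$ rather than $3$, and the gigantic constant $T(d)=10^{26}$ is used to bring the bootstrap constant down from $1/2$ to $1/3$. The auxiliary parameter $z$ is kept precisely so that the recursion closes: after conditioning at a stopping time $N$ and translating both paths by $-\varphi_N$, the overhanging pre-edge $\{\varphi_{N-1},\varphi_N\}$ of $\varphi$ becomes the new $z$-parameter of the shifted problem, which is again of the form defining $W_n$.

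I would define stopping times $(K,N)$ as the smallest pair of even indices such that $x+\gamma_{\tilde K}=\varphi_{\tilde N}$ for some $\tilde K\leq K$, $\tilde N\leq N$. Choosing $K,N$ even ensures that the next increment out of $\gamma_K$ and $\varphi_N$ is drawn from $\psi_4$, the only one of the four increment measures whose support consists of vectors with strictly positive $w_4$-component; combined with the non-negativity of $\langle\zeta_{k+1}-\zeta_k,w_4\rangle$ at the other positions, this should give the key monotonicity $\langle x+\gamma_k,w_4\rangle<\langle\varphi_\ell,w_4\rangle$ for $k\leq K$ and $\ell$ appropriately larger than $N$, and symmetrically. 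Hence $x+\gamma_0^K$ shares no edges with $\varphi_N^\infty$ and $\varphi_0^N$ shares no edges with $x+\gamma_K^n$, yielding the multiplicative split
\begin{equation*}
\ov(x+\gamma_0^n,(z\varphi)_{-1}^\infty)=\ov(x+\gamma_0^K,(z\varphi)_{-1}^N)\cdot\ov(x+\gamma_K^n,\varphi_N^\infty).
\end{equation*}
After shifting by $-\varphi_N$, the second factor has distribution in $M_2\times M_2$ (with new $x$-parameter $\gamma_K-\varphi_N$ and new $z$-parameter $\varphi_{N-1}-\varphi_N$) by the shift-invariance built into $M_2$, and is thus dominated by $W_n$ via the tower rule. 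The first factor, on the event $\{K<\infty\}$, is bounded by summing over the possible intra-block shared edges; by the marginal-summation trick from \eqref{eq:first bound}, each such sum evaluates to $\sum_{w\in A_k}\psi_k(w)^2/p_w=1/\sum_{y\in A_k}p_y\leq 28/T(d)$ thanks to Claim \ref{claim:3.4}.

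The intersection probability $\p(K<\infty)$ should then be controlled by combining Lemma \ref{lem1} with Claim \ref{claim:3.4}: since $w_1,w_2,w_3,w_4$ have pairwise disjoint supports, a full $6$-step cycle of either path produces independent symmetric $\Z$-valued projections along each $w_j$-direction, to which the anti-concentration bound $\p(S_n=x)\leq 1/\sqrt n$ of Lemma \ref{lem1} applies coordinate-by-coordinate. A scale-decomposition argument in the spirit of Claim \ref{claim:hitting}, summing $\p(x+\gamma_k=\varphi_\ell)$ over $(k,\ell)$, then yields $\p(K<\infty)=O(T(d)^{-\alpha})$ for some $\alpha>0$. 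Multiplying this by the intra-block cost $O(1/T(d))$ and by the $W_n$ coming from the shifted second factor gives a total of the form $O(T(d)^{-\alpha})\cdot W_n$, which is much smaller than $W_n/3$ once $T(d)=10^{26}$.

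The main obstacle I anticipate is the monotonicity step: since the intra-block sets $A_1,A_2,A_3$ allow arbitrarily large $w_4$-components, the naive ``$3Q$ dominates $2Q$'' argument from Lemma \ref{lem:Wn def 1} does not transfer verbatim. The clean fix is likely to choose the stopping times modulo $6$ rather than modulo $2$, so that an entire cycle (containing three $\psi_4$-increments) is guaranteed to separate past from future and absorb any single intra-block fluctuation; alternatively one could argue probabilistically that, conditioned on the value of $\langle\gamma_K-\gamma_{\tilde K},w_4\rangle$, the post-stopping-time hitting probability is still $O(T(d)^{-\alpha})$. The remaining bookkeeping, including the degenerate case $x=z=\mz$ (analogous to Case 2 in Lemma \ref{lem:Wn def 1}, with the first possibly shared edge peeled off), is then a mechanical adaptation of the three-dimensional argument.
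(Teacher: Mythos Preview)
Your bootstrap template is right, but the implementation has a genuine gap that stems from carrying over the wrong piece of the $M_1$ argument.

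\textbf{The rounding is the problem, not the solution.} You propose to round $K,N$ up to even indices (or multiples of $6$), and then bound the ``intra-block'' overlap $\ov(x+\gamma_0^K,(z\varphi)_{-1}^N)$ via the marginal-summation identity $\sum_{w\in A_k}\psi_k(w)^2/p_w=1/\sum_{y\in A_k}p_y$. But that identity applies only in the very specific situation where both paths start at the same vertex and we are averaging $p_w^{-1}$ against the probability $\psi_k(w)^2$ that \emph{both} first increments equal $w$ (this is exactly Case~3 in the paper, equation~\eqref{eq:first bound}). It does \emph{not} bound a generic shared edge appearing between the actual first intersection and your rounded index: such an edge contributes a raw factor $p_w^{-1}$ with no compensating $\psi_k(w)$ from the other path, and in $M_2$ there is no uniform lower bound $\eta$ on $p_w$ (unlike the short edges $u^\pm,v^\pm$ in $M_1$). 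So your first factor is unbounded, and the scheme collapses.

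\textbf{What the paper does instead.} Do not round at all. Set $K=\inf\{m:\ x+\gamma_m=\varphi_\ell\text{ for some }\ell\ge -1\}$ and let $N$ be the corresponding $\ell$. Then, trivially, $x+\gamma_0^{K}$ shares no vertex---hence no edge---with $(z\varphi)_{-1}^\infty$, so the ``pre-$K$'' overlap factor is exactly $1$. The only subtlety is that $\{x+\gamma_K,x+\gamma_{K+1}\}$ may coincide with $\{\varphi_{N-1},\varphi_N\}$ traversed backwards; this is precisely why the extra parameter $z$ is carried: after shifting, $\varphi_{N-1}-\varphi_N$ becomes the new $z$. Crucially this works because $M_2$ consists of just six measures and is closed under shifts by \emph{any} integer, not only multiples of $6$ (the increments are independent, so conditioning on $\gamma_0^K$ imposes no constraint on the future beyond the residue of $K$ modulo $6$). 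Your anticipated ``monotonicity obstacle'' then evaporates: since $x+\gamma_K=\varphi_N$ and every second increment has strictly positive $w_4$-component, one has $\langle\varphi_\ell,w_4\rangle<\langle\varphi_N,w_4\rangle$ for all $\ell\le N-2$, which blocks any further backward sharing.

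\textbf{A smaller correction.} The intersection probability $\bp(K<\infty)$ is \emph{not} $O(T(d)^{-\alpha})$: the dominant contribution in Claim~\ref{claim:hitting d=4} comes from the convergent tails $\sum_k k^{-3/2}$ and is an absolute constant ($\le 1/4$). The $T(d)$-dependence enters only through the finitely many small-index terms. So Case~1 gives $1+\tfrac14 W_n$, and the extra $\tfrac{1}{12}W_n$ of slack is consumed in Cases~2 and~3 by peeling off the \emph{first} edge (where the marginal-summation trick is legitimately applicable and yields $28/T(d)\cdot W_n$).
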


\begin{lemma}\label{lem:Wn def 3}
	For the quantity $W_n^\prime$ defined by
	\begin{equation}\label{eq:Wn defini3}
		W_{n}^\prime = \sup_{\hat{\mu},\tilde{\mu}\in M_3, x,z\in \Z^3} \be_{\hat{\mu} \times \tilde{\mu}}
		\left[\ov(x+\gamma_0^n,(z\varphi)_{-1}^{\infty})\right]
	\end{equation}
	one has $W_n^\prime \leq \frac{3}{2}$ for all $n\in \N$.
\end{lemma}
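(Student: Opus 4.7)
\medskip

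\noindent
\textbf{Proof plan for Lemma \ref{lem:Wn def 3}.} The plan is to mimic the bootstrapping structure used in Lemma \ref{lem:Wn def 1}: I will show that for every $\hat\mu,\tilde\mu\in M_3$, $x,z\in \Z^3$ and every $n\in\N$,
\begin{equation*}
\be_{\hat\mu\times\tilde\mu}\!\left[\ov(x+\gamma_0^n,(z\varphi)_{-1}^{\infty})\right] \;\leq\; 1+\tfrac{1}{3}\,W_n^\prime,
\end{equation*}
which, after taking the supremum on the left, yields $W_n^\prime\leq 1+\tfrac13 W_n^\prime$ and hence $W_n^\prime\leq \tfrac32$. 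To set up the split, I will define stopping times $K$ on $x+\gamma_0^n$ and $N$ on $(z\varphi)_{-1}^{\infty}$, analogous to those in Case~1 of Lemma~\ref{lem:Wn def 1}, namely the first indices that are \emph{even} (i.e.\ correspond to a $Z$-increment, which strictly advances $e_3$) for which the two paths have intersected up to that index. The exact definition will need a small case-analysis according to whether $x=\mz$ and whether $z\in\{\mz,\varphi_0,\varphi_1\}$, since the presence of the extra edge $\{z,\varphi_0\}$ and the shared starting vertex can produce trivial intersections that must be separated off with a multiplicative factor bounded by the reciprocal of the smallest positive step probability.

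Once $K$ and $N$ are defined, the key structural observation is that because the $Z$-step strictly increases the $e_3$-coordinate (by at least $1$) and because $\langle\zeta_{k+1}-\zeta_k,e_3\rangle\geq 0$ for all $k$, the prefix $x+\gamma_0^K$ can share no edges with the suffix $(z\varphi)_{N}^{\infty}$ and vice versa. Thus on $\{K<\infty\}$ the overlap factors as
\begin{equation*}
\ov(x+\gamma_0^n,(z\varphi)_{-1}^{\infty}) \;=\; \ov(x+\gamma_0^K,(z\varphi)_{-1}^{N})\cdot\ov(x+\gamma_K^n,(z\varphi)_{N}^{\infty}),
\end{equation*}
with the first factor bounded by a constant $C_0$ depending only on the finitely many step distributions $\psi_k$ and on the minimal positive step probability. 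Shifting by $\gamma_K$ and $\varphi_N$ as in Lemma~\ref{lem:Wn def 1}, the conditional expectation of the second factor given $\sigma(\gamma_0^K,\varphi_0^N)$ is at most $W_n^\prime$, because the shifted paths $(\gamma_{K+i}-\gamma_K)_{i\geq 0}$ and $(\varphi_{N+i}-\varphi_N)_{i\geq 0}$ have distributions in $M_3$ and $\gamma_K-\varphi_N$ plays the role of the new ``$x$''. Taking expectations and applying the tower property reduces the problem to proving $C_0\cdot \bp_{\hat\mu\times\tilde\mu}(K<\infty)\leq \tfrac13$.

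The main obstacle, and where the $d=3$ proof genuinely differs from the $d\geq 4$ version, is controlling $\bp(K<\infty)$. Following the pattern of Claim~\ref{claim:hitting}, I will bound $\bp(K<\infty)$ by $\sum_{k,m}\bp_{\hat\mu\times\tilde\mu}(x+\gamma_k=(z\varphi)_m)$ and then split into ``small $m$'' (treated by the fact that each fixed shift $x$ is hit with small probability, using the magnitude of the step in $e_3$) and ``large $m$'' (treated by a pointwise dispersion bound on the law of $\varphi_m$). For the large-$m$ term I will condition on the $Z$- and $Y$-coordinates of $\varphi$ and bound the law of the $\tilde X$-part of $\varphi_m$ using Proposition~\ref{prop:unpredictable}, exploiting that the first coordinates of the $\tilde X^i$ are products of unpredictable signs $\sigma_i$ with exchangeable positive magnitudes, hence by \eqref{eq:predict 100 new} give a pointwise bound of order $k^{-\log 2/\log 3}$. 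Multiplying with the corresponding dispersion bound of order $k^{-1/2}$ obtained from Lemma~\ref{lem1} for the $Y$-coordinate of $\varphi_m$ produces a density bound of order $m^{-1/2-\log 2/\log 3}$, which is summable because $\tfrac12+\tfrac{\log 2}{\log 3}>1$. Summing these bounds and optimising the split point gives $\bp(K<\infty)\leq C' \bigl(T(3)/12\bigr)^{-c}$ for a small positive $c$; provided $T(3)=10^{400}$ is taken large enough, this is smaller than $1/(3C_0)$, closing the bootstrap. The hardest part will be chasing the constants carefully so that the product $C_0\cdot C'\cdot\bigl(T(3)/12\bigr)^{-c}\leq \tfrac13$, since $C_0$ is inflated by the three different step distributions and by the extra edge $\{z,\varphi_0\}$, which is precisely why the chosen threshold $T(3)$ has to be astronomically large.
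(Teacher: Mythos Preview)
Your overall bootstrap skeleton matches the paper's, and your plan for bounding the hitting probability via Proposition~\ref{prop:unpredictable} combined with Lemma~\ref{lem1} is exactly what the paper does in Claims~\ref{claim:hear kernel unpred} and~\ref{claim:hitting d=3}. However, there is a genuine gap in how you handle the prefix overlap.

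You propose to round $K$ and $N$ up to the next even index (mimicking the rounding to multiples of $3$ in Lemma~\ref{lem:Wn def 1}) and then bound $\ov(x+\gamma_0^K,(z\varphi)_{-1}^N)$ by a constant $C_0$ depending on ``the minimal positive step probability''. In the setting of Lemma~\ref{lem:Wn def 1} this works because the non-$X$ steps are always translates of the fixed vectors $u^\pm,v^\pm$ with probability at least $\eta>0$. In the isotropic setting of Lemma~\ref{lem:Wn def 3}, by contrast, \emph{every} step is drawn from one of the measures $\psi_1,\psi_2,\psi_3$, and these have no uniform positive lower bound: $\inf\{p_x:p_x>0\}$ can be arbitrarily small independently of $\sum_x p_x>T(3)$. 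Hence your $C_0$ is not controlled by $T(3)$ at all, and the product $C_0\cdot\bp(K<\infty)$ cannot be made $\leq\tfrac13$ uniformly over all admissible $(p_x)$.

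The paper avoids this entirely by \emph{not} rounding: $K$ is defined as the first index $m$ with $x+\gamma_m\in(z\varphi)_{-1}^\infty$, so by minimality the segment $x+\gamma_0^K$ shares \emph{no} edge with $(z\varphi)_{-1}^\infty$ and one gets the exact identity $\ov(x+\gamma_0^n,(z\varphi)_{-1}^\infty)=\ov(x+\gamma_K^n,(z\varphi)_{N-1}^\infty)$ on $\{K<\infty\}$, i.e.\ $C_0=1$. The price is that the edge $\{\varphi_{N-1},\varphi_N\}$ may coincide with $\{x+\gamma_K,x+\gamma_{K+1}\}$ (traversed in opposite directions), and \emph{this} is precisely why the extra parameter $z$ sits in the definition of $W_n'$: after recentring, the right-hand side becomes $\ov(\tilde\gamma_0^{n-K},(\tilde z\,\tilde\varphi)_{-1}^\infty)$ with $\tilde z=(z\varphi)_{N-1}-(z\varphi)_N$, which is again of the form appearing in~\eqref{eq:Wn defini3}. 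So $z$ is not a nuisance to be stripped off with a multiplicative penalty, it is the device that closes the recursion with no penalty in Case~1. The cases $x=z$ and $x=\mz$ do require peeling off one initial edge, but there the paper uses the \emph{uniform} bound $\psi_k(w)\leq 28/T(d)$ from Claim~\ref{claim:a sets} (an upper bound on step probabilities, not a lower bound), which \emph{is} controlled by $T(3)$.
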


The proofs of both Lemmas work exactly the same, so we prove them together.

\begin{proof}[Proof of Lemma \ref{lem:Wn def 2} and Lemma \ref{lem:Wn def 3}]
	Let $j\in \{2,3\}$ and let $\widetilde{W_n}=W_n$ for $j=2$, respectively $\widetilde{W_n}=W_n^\prime$ for $j=3$. Note that $d\geq 4$ for $j=2$, respectively $d=3$ for $j=3$.
	We will show that for any $\hat{\mu},\tilde{\mu}\in M_j, x,z \in \Z^d$ one has
	\begin{equation}\label{eq:bootstrapping ineq}
		\be_{\hat{\mu} \times \tilde{\mu}} \left[\ov(x+\gamma_0^n,(z\varphi)_{-1}^{\infty})\right] \leq 1+ \frac{\widetilde{W_n}}{3},
	\end{equation}
	which implies that $\widetilde{W_n} \leq 1+ \frac{\widetilde{W_n}}{3}$, or $\widetilde{W_n} \leq \frac{3}{2}$; note that one can safely rearrange these inequalities as $\widetilde{W_n} < \infty$. For this, we distinguish three cases: $x \notin \{\mz,z\}, x=z$, and $x=\mz$. We can without loss of generality assume that $z\neq \mz$, because if $z=\mz$ and $v\in \Z^d\setminus \{\mz\}$ is some arbitrary vector, one has by definition of the weighted overlap that
	\begin{equation*}
		\ov(x+\gamma_0^n,(z\varphi)_{-1}^{\infty}) = \ov(x+\gamma_0^n,\varphi_{0}^{\infty}) 
		\leq
		\ov(x+\gamma_0^n,(v\varphi)_{-1}^{\infty}).
	\end{equation*}
	Thus, when proving \eqref{eq:bootstrapping ineq}, we can restrict to the case where $z\neq \mz$.\\
	
	\noindent
	\hypertarget{target1}{\textbf{Case 1:}}
	We start with the case $x\notin \{\mz,z\}$.
	Define the random variables $K=K(x+\gamma_0^n,(z\varphi)_{-1}^{\infty})$ and $N=N(x+\gamma_0^n,(z\varphi)_{-1}^{\infty})$ by
	\begin{align*}
		& K(x+\gamma_0^n,(z\varphi)_{-1}^{\infty}) = \inf \left\{ m\in \{0,\ldots,n\}  : x+\gamma_m=\varphi_\ell \text{ for some } \ell \in \{-1,0,1,2,\ldots\} \right\},\\
		& N(x+\gamma_0^n,(z\varphi)_{-1}^{\infty}) = \inf \left\{ \ell \in \{-1,0,1,2,\ldots\} : x+\gamma_{\{K(x+\gamma_0^n,(z\varphi)_{-1}^{\infty})\}}=\varphi_{\ell}  \right\}.
	\end{align*}
	Note that $K \in \{0,\ldots,n\}\cup \{+\infty\}$ and $N\in \{-1,0,\ldots\}\cup \{+\infty\}$ by definition. 
	Further, again by definition of the random variables $N$ and $K$, on the event where $K < \infty$, we have that
	\begin{align}\label{eq:ineq}
		\ov(x+\gamma_0^n,(z\varphi)_{-1}^{\infty}) = \ov(x+\gamma_{K}^{n},(z\varphi)_{N-1}^\infty),
	\end{align}
	where we define $(z\varphi)_{-2} \coloneqq (z\varphi)_{-1}=z$ for the case where $N=-1$. For the case where $K=+\infty$, we define $x+\gamma_{K}^{n}=\emptyset$, which implies that the overlap equals $+1$. Equality \eqref{eq:ineq} holds, as the inner products $\langle \gamma_j, w_4 \rangle$ and $\langle \varphi_j, w_4 \rangle$ are non-decreasing in $j \in \N_0$. Additionally, $\langle \varphi_j, w_4 \rangle$ needs to be strictly increasing on every second term. As $x+\gamma_K=(z\varphi)_N$, the path $x+\gamma_K^\infty$ can only have joint edges with $(z\varphi)_{N-1}^\infty$, but not with $(z\varphi)_{-1}^{N-1}$. On the other hand, the path segment $x+\gamma_{0}^{K}$ can have no joint edges with $(z\varphi)_{-1}^\infty$ by definition of $K$. However, it is possible that $\{x+\gamma_K,x+\gamma_{K+1}\} = \{\varphi_N,\varphi_{N-1}\}$, which is also the reason why the additional parameter $z\in \Z^d$ in the definition of $\widetilde{W_n}$ (\eqref{eq:Wn defini}, respectively \eqref{eq:Wn defini3}) is necessary. Note that the two paths $(z\varphi)_{-1}^\infty$ and $x+\gamma_0^\infty$ traverse the edge $\{\varphi_N,\varphi_{N-1}\}$ in opposite directions in this case. Defining the sequences $\tilde{\gamma}=(\tilde{\gamma}_m=\gamma_{m+K}-\gamma_K)_{m \in \N_0}$ and $\tilde{\varphi}=(\tilde{\varphi}_\ell = (z\varphi)_{\ell+N}-(z\varphi)_N)_{\ell \in \N_0}$ gives that
	\begin{equation}\label{eq:ineq2}
		\ov(x+\gamma_{K}^{n},(z\varphi)_{N-1}^\infty) \leq \ov \left( \tilde{\gamma}_{0}^{n}, (((z\varphi)_{N-1}-(z\varphi)_{N})\tilde{\varphi})_{-1}^{\infty} \right)
	\end{equation}
	Note that for the case $N=-1$, this does not cause any problems, as $(z\varphi)_{-1}=(z\varphi)_{-2}=z$ and thus
	\begin{equation*}
		\ov \left( \tilde{\gamma}_{0}^{n}, (((z\varphi)_{N-1}-(z\varphi)_{N})\tilde{\varphi})_{-1}^{\infty} \right)
		=
		\ov \left( \tilde{\gamma}_{0}^{n}, (\mz\tilde{\varphi})_{-1}^{\infty} \right)
		=
		\ov \left( \tilde{\gamma}_{0}^{n}, \tilde{\varphi}_{0}^{\infty} \right).
	\end{equation*}
	The random variables $N$ and $K$ are measurable with respect to the $\sigma$-algebra generated by $\gamma_0^K$ and $\varphi_0^N$. Conditioned on the $\sigma$-Algebra $\sigma(\gamma_0^K, \varphi_0^N)$, the sequences $(\tilde{\gamma}_\ell)_{\ell \in \N_0}$ and $(\tilde{\varphi}_\ell)_{\ell \in \N_0}$ are still independent and distributed according to measures in $M_j$. Furthermore, $((z\varphi)_{N-1}-(z\varphi)_{N})$ is measurable with respect to $\sigma(\gamma_0^K, \varphi_0^N)$. This yields that
	\begin{align}\label{eq:ineq3}
		\notag &
		\be_{\hat{\mu} \times \tilde{\mu}}  \left[\ov(\tilde{\gamma}_0^n,(((z\varphi)_{N-1}-(z\varphi)_{N}) \tilde{\varphi})_{-1}^\infty)| \sigma(\gamma_0^K, \varphi_0^N) \right]\\
		&
		\leq
		\sup_{\overline{\mu}, \mu^\prime \in M_j, v \in \Z^d} 
		\be_{\overline{\mu} \times \mu^\prime} \left[\ov\left( \gamma_0^n, (v\varphi)_{-1}^{\infty}\right)\right] \leq \widetilde{W_n}.
	\end{align}
	So in particular, in the light of \eqref{eq:ineq}, \eqref{eq:ineq2}, and \eqref{eq:ineq3}, we get that
	\begin{align*}
		&
		\be_{\hat{\mu} \times \tilde{\mu}} \left[\ov(x+\gamma_{0}^{n},(z\varphi)_{-1}^\infty) | K<\infty\right]
		\leq
		\be_{\hat{\mu} \times \tilde{\mu}} \left[\ov(\tilde{\gamma}_0^n,(((z\varphi)_{N-1}-(z\varphi)_{N}) \tilde{\varphi})_{-1}^\infty) | K<\infty\right]
		\\
		&
		=
		\be_{\hat{\mu} \times \tilde{\mu}} \left[  \be_{\hat{\mu} \times \tilde{\mu}}  \left[\ov(\tilde{\gamma}_0^n,(((z\varphi)_{N-1}-(z\varphi)_{N}) \tilde{\varphi})_{-1}^\infty)| \sigma(\gamma_0^K, \varphi_0^N) \right] | K<\infty\right]
		\leq
		\be_{\hat{\mu} \times \tilde{\mu}} \left[ \widetilde{W_n} \right] = \widetilde{W_n},
	\end{align*}
	which implies that
	\begin{align*}
		&\be_{\hat{\mu} \times \tilde{\mu}} \left[\ov(x+\gamma_0^n,(z\varphi)_{-1}^{\infty})\right]
		\leq
		1 + 
		\be_{\hat{\mu} \times \tilde{\mu}} \left[\ov(x+\gamma_{K}^{\infty},\varphi_{N-1}^\infty) \mathbbm{1}_{\{K<\infty\}}\right]\\
		&
		\leq
		1 + 
		\bp_{\hat{\mu} \times \tilde{\mu}}(K<\infty)
		\be_{\hat{\mu} \times \tilde{\mu}} \left[\ov(x+\gamma_{K}^{\infty},\varphi_{N-1}^\infty) | K<\infty\right] \leq 1+ \frac{\widetilde{W_n}}{4},
	\end{align*}
	where the last inequality follows from Lemma \ref{lemma:hitting d=4} if $j=2, d\geq 4$, respectively Lemma \ref{lemma:hitting d=3} if $j=3, d=3$.\\
	
	\noindent
	\hypertarget{target2}{\textbf{Case 2:}}
	Next, we consider the case $x=z$. Here we have that
	\begin{align}\label{eq:ineq start z}
		\ov(x+\gamma_0^n,(z\varphi)_{-1}^{\infty}) \leq \frac{1}{p_z} \ov(x+\gamma_1^n,\varphi_0^\infty) \mathbbm{1}_{\{\gamma_1 = -z\}} +
		\ov(x+\gamma_0^n,\varphi_0^\infty) .
	\end{align}
	We now bound each of the terms on the right-hand side of \eqref{eq:ineq start z} in expectation, starting with the second summand. Let $v\in \Z^d \setminus \{\mz, x\}$ be deterministically chosen. Then
	\begin{equation*}
		\ov(x+\gamma_0^n,\varphi_0^\infty) \leq \ov(x+\gamma_0^n,(v\varphi)_{-1}^\infty).
	\end{equation*}
	Taking expectations and the analysis of \eqref{eq:bootstrapping ineq} in \hyperlink{target1}{Case 1} above (where we assumed $x\notin \{\mz,z\}$) shows that
	\begin{equation}\label{eq:second summand}
		\be_{\hat{\mu} \times \tilde{\mu}}\left[
		\ov(x+\gamma_0^n,\varphi_0^\infty)\right] \leq
		\be_{\hat{\mu} \times \tilde{\mu}}\left[ \ov(x+\gamma_0^n,(v\varphi)_{-1}^\infty)\right] \leq 1 + \frac{\widetilde{W_n}}{4}.
	\end{equation}
	
	
	For the first summand of \eqref{eq:ineq start z}, we have that
	\begin{align}\label{eq:first summand}
		\notag
		& \be_{\hat{\mu} \times \tilde{\mu}} \left[ \frac{1}{p_z} \ov(x+\gamma_1^n,\varphi_0^\infty) \mathbbm{1}_{\{\gamma_1 = -z\}} \right] 
		=
		\frac{1}{p_z} \bp_{\hat{\mu} \times \tilde{\mu}}(\gamma_1=-z)
		\be_{\hat{\mu} \times \tilde{\mu}} \left[ \ov(x+\gamma_1^n,\varphi_0^\infty) | \gamma_1 = -z \right]\\
		&
		\leq \frac{28 p_z}{p_z T(d)} \be_{\hat{\mu} \times \tilde{\mu}} \left[ \ov(x+\gamma_1^n,\varphi_0^\infty) | \gamma_1 = -z \right]
	\end{align}
	where the last inequality follows from the fact that $\hat{\mu} \in M_j$ and Claim \ref{claim:3.4} for $j=2$, respectively Claim \ref{claim:a sets} for $j=3$. For the second term above $\left( \be_{\hat{\mu} \times \tilde{\mu}} \left[ \ov(x+\gamma_1^n,\varphi_0^\infty) | \gamma_1 = -z \right]\right)$, note that the sequence $\Delta(\gamma_1^n) = \left(\gamma^\prime_k\right)_{k\in \{0,\ldots,n-1\}}$ defined by
	\begin{equation*}
		\gamma^\prime_k = \gamma_{k+1}-\gamma_{1} 
	\end{equation*}
	satisfies
	\begin{equation*}
		\ov(x+\gamma_1^n,\varphi_0^\infty) = \ov(x-z+\gamma^{\prime (n-1)}_{0},\varphi_0^\infty)
		=
		\ov(\gamma^{\prime (n-1)}_{0},\varphi_0^\infty)
		=
		\ov(\Delta(\gamma_1^n),\varphi_0^\infty).
	\end{equation*}
	Conditioned on $\gamma_1 = -z$, the sequence $\left(\gamma^\prime_k = \gamma_{k+1}-\gamma_{1}\right)_{k\in \N_0}$ is still distributed according to a measure in $M_j$. Call this measure $\mu^\prime$. Then
	\begin{align*}
		&\be_{\hat{\mu} \times \tilde{\mu}} \left[ \ov(x+\gamma_1^n,\varphi_0^\infty) | \gamma_1 = -z \right]
		\leq
		\be_{\hat{\mu} \times \tilde{\mu}} \left[ \ov(\Delta(\gamma_1^n),\varphi_0^\infty) | \gamma_1=-z \right]\\
		&
		=
		\be_{\mu^\prime \times \tilde{\mu}} \left[\ov( \gamma_0^{n-1},\varphi_0^\infty) \right] \leq \widetilde{W_n}.
	\end{align*}
	Inserting this into \eqref{eq:first summand}, we get that
	\begin{equation*}
		\be_{\hat{\mu} \times \tilde{\mu}} \left[ \frac{1}{p_z} \ov(x+\gamma_1^n,\varphi_0^\infty) \mathbbm{1}_{\{\gamma_1 = -z\}} \right] \leq \frac{28}{T(d)} \widetilde{W_n} \leq \frac{\widetilde{W_n}}{100}.
	\end{equation*}
	Taking expectations in \eqref{eq:ineq start z}, we get together with \eqref{eq:second summand} that
	\begin{equation*}
		\be_{\hat{\mu} \times \tilde{\mu}} \left[\ov(x+\gamma_0^n,(z\varphi)_{-1}^{\infty})\right] \leq 1+ \frac{\widetilde{W_n}}{4} + \frac{\widetilde{W_n}}{100} \leq 1+ \frac{\widetilde{W_n}}{3}
	\end{equation*}
	for the case $x=z$. \\

	\noindent
	\textbf{Case 3:} Finally, we consider the case where $x=\mz$.
	Here we have that
	\begin{align}
		\notag
		\ov(\gamma_0^n,(z\varphi)_{-1}^{\infty}) & \leq \frac{1}{p_z} \ov(\gamma_1^n,\varphi_0^\infty) \mathbbm{1}_{\{\gamma_1 = z\}}  + 
		\sum_{w\in \Z^d } \ \sum_{v \in \Z^d \setminus \{w\}}
		\ov(\gamma_1^n,\varphi_0^\infty) \mathbbm{1}_{\{\gamma_1 = w,  \varphi_1 = v\}} \\
		\label{line2}
		\ \ \ \
		&
		+ \sum_{w\in \Z^d\setminus\{\mz,z\}} \frac{1}{p_w}   \mathbbm{1}_{\{\gamma_1 = \varphi_1 = w\}}
		\ov(\gamma_1^n,\varphi_1^\infty).
	\end{align}
	We bound each of the three terms on the right-hand side of inequality \eqref{line2} separately in expectation. For the first term, the same analysis as in \hyperlink{target2}{Case 2}, specifically as used to bound the first summand of \eqref{eq:ineq start z}, shows that 
	\begin{equation*}
		\be_{\hat{\mu} \times \tilde{\mu}} \left[ \frac{1}{p_z} \ov(\gamma_1^n,\varphi_0^\infty) \mathbbm{1}_{\{\gamma_1 = z\}} \right] \leq \frac{\widetilde{W_n}}{100}.
	\end{equation*}
	For the second summand of \eqref{line2}, for every $w,v \in \Z^d$ with $w\neq v$ and $ \bp_{\hat{\mu} \times \tilde{\mu}}\left(  \gamma_1 = w,  \varphi_1 = v \right) > 0$ define the sequences $\Delta(\gamma_1^n) \coloneqq (\gamma^\prime_k)_{k\in \{0,\ldots,n-1\}}$ and $\Delta(\varphi) \coloneqq (\varphi^\prime_k)_{k\in \N_0}$ by 
	\begin{equation*}
		\gamma_k^\prime = \gamma_{k+1} - w \ \text{ and } \
		\varphi_k^\prime = \varphi_{k+1} - v.
	\end{equation*}
	The path $-v+\varphi_0^\infty$ traverses exactly the same set of edges as the path $(-v,\varphi^\prime)_{-1}^\infty$. The path $-v+\gamma_1^n$ traverses exactly the same st of edges as the path $-v+w+\gamma_0^{\prime(n-1)}=w-v+\Delta(\gamma_1^n)$.
	So for the weighted overlap, we get that
	\begin{align*}
		\ov(\gamma_1^n, \varphi_0^\infty) &
		= 
		\ov(-v+\gamma_1^n, -v+\varphi_0^\infty)
		= 
		\ov \left( w-v+\gamma_0^{\prime(n-1)}, (-v,\varphi^\prime)_{-1}^\infty \right) \\
		&
		= 
		\ov \Big( w-v+\Delta(\gamma_1^n), (-v,\varphi^\prime)_{-1}^\infty \Big).
	\end{align*}
	and taking expectations gives that
	\begin{align*}
		\be_{\hat{\mu} \times \tilde{\mu}} \left[ \ov(\gamma_1^n, \varphi_0^\infty) | \gamma_1=w, \varphi_1=v\right]
		=
		\be_{\hat{\mu} \times \tilde{\mu}} \left[ \ov \Big( w-v+\Delta(\gamma_1^n), (-v,\varphi^\prime)_{-1}^\infty \Big) | \gamma_1=w, \varphi_1=v\right].
	\end{align*}
	Conditioned on $ \gamma_1=w, \varphi_1=v$, the sequences $\varphi^\prime= \left(\varphi^\prime_k = \varphi_{k+1}-v\right)_{k\in \N_0}$ and $\gamma^\prime = \left(\gamma^\prime_k = \gamma_{k+1}-w\right)_{k\in \N_0}$ are distributed according to measures in $M_j$. Call these distributions $\mu^{\prime}$, respectively $\mu^{\prime\prime}$. As $w-v\neq \mz$ and $w-v \neq -v$ (remember that $\bp_{\hat{\mu} \times \tilde{\mu}} \left(\gamma_1=w, \varphi_1=v\right) > 0$, so $w\neq \mz$) we get by the analysis of \hyperlink{target1}{Case 1} that
	\begin{align*}
		&\be_{\hat{\mu} \times \tilde{\mu}} \left[ \ov \Big( w-v+\Delta(\gamma_1^n), (-v,\varphi^\prime)_{-1}^\infty \Big) | \gamma_1=w, \varphi_1=v\right]\\
		&
		=
		\be_{\mu^{\prime} \times \mu^{\prime\prime}} \left[ \ov \Big( w-v+\gamma_0^{n-1}, (-v,\varphi)_{-1}^\infty \Big)  \right] \leq 1+\frac{\widetilde{W_n}}{4}
	\end{align*}
	for all $w,v\in \Z^d$ with $w\neq v, \bp_{\hat{\mu} \times \tilde{\mu}} \left(\gamma_1=w, \varphi_1=v\right) > 0$. The law of total expectation implies that
	\begin{align*}
		&\be_{\hat{\mu} \times \tilde{\mu}} \left[ \sum_{w\in \Z^d } \ \sum_{v \in \Z^d \setminus \{w\}}
		\ov(\gamma_1^n,\varphi_0^\infty) \mathbbm{1}_{\{\gamma_1 = w,  \varphi_1 = v\}} \right]\\
		&
		= \sum_{w\in \Z^d } \ \sum_{v \in \Z^d \setminus \{w\}}
		\be_{\hat{\mu} \times \tilde{\mu}} \left[ 
		\ov(\gamma_1^n,\varphi_0^\infty) | \gamma_1 = w,  \varphi_1 = v  \right] \bp_{\hat{\mu} \times \tilde{\mu}}\left(  \gamma_1 = w,  \varphi_1 = v \right)
		\leq
		1+\frac{\widetilde{W_n}}{4}.
	\end{align*}
	Finally, we consider the third summand of \eqref{line2}. For $w\in \Z^d$ with $\bp_{\hat{\mu} \times \tilde{\mu}}\left(  \gamma_1 = w =  \varphi_1  \right)>0$, consider the sequences $\Delta(\gamma_1^n)= (\gamma_k^\prime=\gamma_{k+1}-w)_{k\in \{0,\ldots,n-1\}}$ and $\Delta(\varphi) \coloneqq (\varphi^\prime_k = \varphi_{k+1}-w)_{k\in \N_0}$. Then,  on the event where $\varphi_1=\gamma_1=w$, one has
	\begin{equation*}
		\ov(\gamma_1^n,\varphi_1^\infty) = \ov(\Delta(\gamma_1^n),\Delta(\varphi))
	\end{equation*}
	Further, conditioned on $\varphi_1=\gamma_1=w$, the distributions of the sequences $(\gamma_k^\prime=\gamma_{k+1}-w)_{k\in \N_0}$ and $(\varphi^\prime_k = \varphi_{k+1}-w)_{k\in \N_0}$ are in $M_j$. Write $\mu^\prime$, respectively $\mu^{\prime\prime}$ for the distribution of $(\gamma_k^\prime=\gamma_{k+1}-w)_{k\in \N_0}$, respectively $(\varphi^\prime_k = \varphi_{k+1}-w)_{k\in \N_0}$. Then
	\begin{align*}
		&\be_{\hat{\mu} \times \tilde{\mu}}\left[\ov(\gamma_1^n,\varphi_1^\infty) | \gamma_1=\varphi_1=w\right]
		=
		\be_{\hat{\mu} \times \tilde{\mu}}\left[\ov(\Delta(\gamma_1^n),\Delta(\varphi)) | \gamma_1=\varphi_1=w\right]\\
		&
		=
		\be_{\mu^{\prime} \times \mu^{\prime\prime }} \left[\ov(\gamma_0^{n-1},\varphi_0^{n-1})\right]
		\leq
		\widetilde{W_n}
	\end{align*}
	Combining this with the fact that
	\begin{equation*}
		\bp_{\hat{\mu} \times \tilde{\mu}} \left( \gamma_1 = \varphi_1 = w\right)
		=
		\bp_{\hat{\mu} \times \tilde{\mu}} \left(  \varphi_1 = w\right)
		\bp_{\hat{\mu} \times \tilde{\mu}} \left( \gamma_1 = w\right)
		\leq
		\left(\frac{p_w 28}{T(d)}\right) \bp_{\hat{\mu} \times \tilde{\mu}} \left(  \varphi_1 = w\right)
	\end{equation*}
	by Claim \ref{claim:3.4} for $d\geq 4$, respectively Claim \ref{claim:a sets} for $d=3$, we get by the law of total expectation that
	\begin{align*}
		& \be_{\hat{\mu} \times \tilde{\mu}} \left[ \sum_{w\in \Z^d\setminus\{\mz,z\}} \frac{1}{p_w}   \mathbbm{1}_{\{\gamma_1 = \varphi_1 = w\}}
		\ov(\gamma_1^n,\varphi_1^\infty) \right]\\
		&
		\leq
		\sum_{w\in \Z^d\setminus\{\mz,z\}}  \frac{1}{p_w} \bp_{\hat{\mu} \times \tilde{\mu}} \left( \gamma_1 = \varphi_1 = w\right)
		\be_{\hat{\mu} \times \tilde{\mu}} \left[    
		\ov(\gamma_1^n,\varphi_1^\infty) | \gamma_1 = \varphi_1 = w\right]\\
		&
		\leq
		\sum_{w\in \Z^d\setminus\{\mz,z\}}  \frac{1}{p_w} \left(\frac{p_w 28}{T(d)}\right)  \bp_{\hat{\mu} \times \tilde{\mu}} \left(  \varphi_1 = w\right)
		\widetilde{W_n}\\
		&
		= \widetilde{W_n}
		\frac{ 28}{T(d)}
		\sum_{w\in \Z^d\setminus\{\mz,z\}}   \bp_{\hat{\mu} \times \tilde{\mu}} \left(  \varphi_1 = w\right)
		\leq \frac{\widetilde{W_n}}{100}.
	\end{align*}
	Thus we bounded each of the three summands on the right-hand side of \eqref{line2} in expectation. Summing the three inequalities we get that
	\begin{equation*}
		\be_{\hat{\mu} \times \tilde{\mu}} \left[\ov(\gamma_0^n,(z\varphi)_{-1}^{\infty})\right]
		\leq 1+ \frac{\widetilde{W_n}}{4} + \frac{\widetilde{W_n}}{100} + \frac{\widetilde{W_n}}{100}
		\leq 1+ \frac{\widetilde{W_n}}{3}
	\end{equation*}
	which finishes the proof.
\end{proof}

\subsubsection{Intersection probabilities in dimensions $d \geq 4$}\label{subsubsec:intersection 4 or higher}

\begin{lemma}\label{lemma:hitting d=4}
	Let $\left((\gamma_k)_{k\in \N_0},(\varphi_n)_{n\in \N_0}\right)$ be distributed according to the measure $\bp_{\hat{\mu} \times \tilde{\mu}}$ with $\hat{\mu},\tilde{\mu} \in M_2$. Then, for all $x,z \in \Z^d$ one has
	\begin{equation}\label{eq:hitting d=4}
		\bp_{\hat{\mu} \times \tilde{\mu}}(\exists (k,n) \in \N_0 \times \N_0\setminus \{(0,0)\} \text{ with } x+\gamma_k=\varphi_n \text{ or } \exists k > 0 \text{ with } x+\gamma_k = z)\leq \frac{1}{4} .
	\end{equation}
\end{lemma}
\begin{proof}
	The random variables $\langle \gamma_{i+1}-\gamma_{i}, w_j \rangle$ are independent for different $i\in \N_0$. Further, if $k\geq 6N$, for each $j\in \{1,2,3\}$, there are at least $N$ many indices $i\in \{0,\ldots,k-1\}$ for which $(\gamma_{i+1}-\gamma_i)$ had probability mass function $\psi_{j}$, so in particular $\langle \gamma_{i+1}-\gamma_{i}, w_j \rangle \neq 0$ almost surely. So if we write
	\begin{equation*}
		\langle \gamma_k, w_j \rangle = \sum_{i=0}^{k-1} \langle \gamma_{i+1} - \gamma_{i}, w_j \rangle
		=
		\sum_{i=0}^{k-1}  \text{sgn}(\langle \gamma_{i+1} - \gamma_{i}, w_j \rangle) |\langle \gamma_{i+1} - \gamma_{i}, w_j \rangle | ,
	\end{equation*}
	then for each $j\in \{1,2,3\}$, the above sum contains at least $N$ indices $i\in \{0,\ldots,k-1\}$ for which $|\langle \gamma_{i+1} - \gamma_{i}, w_j \rangle | \neq 0$. Conditioned on the $\sigma$-algebra 
	\begin{equation*}
		\cF = \sigma\left(\left(|\langle \gamma_{i+1} - \gamma_{i}, w_j \rangle |\right)_{i\in \{0,\ldots,k-1\}, j\in \{1,2,3\}}, \langle \gamma_k,w_4\rangle \right),
	\end{equation*}
	all the signs $\text{sgn}(\langle \gamma_{i+1} - \gamma_{i}, w_j \rangle)$ that are non-zero are independent and uniformly distributed on $\{-1,+1\}$. In particular, we get by Lemma \ref{lem1} that
	for all $k\geq 6N$ and all $z_1,z_2,z_3,\ell \in \Z$
	\begin{align*}
		&\bp_{\hat{\mu} \times \tilde{\mu}}\left(
		\langle \gamma_k,w_1\rangle = z_1, 
		\langle \gamma_k,w_2\rangle = z_2, 
		\langle \gamma_k,w_3\rangle = z_3 \Big| \langle \gamma_k,w_4\rangle = \ell  \right)\\
		=
		&
		\be_{\hat{\mu} \times \tilde{\mu}}
		\left[\bp_{\hat{\mu} \times \tilde{\mu}}\left(
		\langle \gamma_k,w_1\rangle = z_1, 
		\langle \gamma_k,w_2\rangle = z_2, 
		\langle \gamma_k,w_3\rangle = z_3 | \cF  \right) \Big| \langle \gamma_k,w_4\rangle = \ell \right]\\
		=
		&
		\be_{\hat{\mu} \times \tilde{\mu}}
		\left[
		\prod_{i=1}^{3}
		\bp_{\hat{\mu} \times \tilde{\mu}}\left(
		\langle \gamma_k,w_i\rangle = z_i | \cF  \right)
		\Big| \langle \gamma_k,w_4\rangle = \ell
		\right]
		\leq  N^{-3/2}.
	\end{align*}
	In the last line, we used that the terms of the form $\bp_{\hat{\mu} \times \tilde{\mu}}\left(
	\langle \gamma_k,w_j\rangle = z_j | \cF  \right)$ are independent conditioned on $\cF$, as they only depend on $\text{sgn}\left(\langle \gamma_{i+1} - \gamma_{i}, w_j \rangle \right)$. As at least $N$ many terms in the sum $\langle \gamma_k, w_j \rangle = \sum_{i=0}^{k-1} \langle \gamma_{i+1} - \gamma_{i}, w_j \rangle$ are non-zero, we can thus apply inequality \eqref{eq:sqrt n bound d=1} from Lemma \ref{lem1} to get that $\bp_{\hat{\mu} \times \tilde{\mu}}\left(
	\langle \gamma_k,w_j\rangle = z_j | \cF  \right) \leq N^{-1/2}$ for $j=1,2,3$. Using the above inequality, we can directly deduce that for each $v\in \Z^d, \ell \in \Z$ one has
	\begin{equation}\label{eq:inequality ssup}
		\bp_{\hat{\mu} \times \tilde{\mu}}\left( \gamma_k =v  \big| \langle \gamma_k,w_4\rangle =\ell \right) \leq N^{-3/2}.
	\end{equation}
	Each layer $\{y:\langle y, w_4\rangle=\ell\}$ (for $\ell \in \Z$) can contain at most three points in $(z\varphi)_{-1}^{\infty}$.
	For $k\geq 6N$ with $N\in \N_0$, we thus get by conditioning on $\langle \gamma_k, w_4 \rangle$ that
	\begin{align*}
		& \bp_{\hat{\mu} \times \tilde{\mu}}\left(x+\gamma_k = \varphi_n \text{ for some } n\in \N_0 \text{ or } x+\gamma_k=z \right)\\
		& \leq \sup_{\ell \in \Z} \bp_{\hat{\mu} \times \tilde{\mu}}\left(x+\gamma_k = \varphi_n \text{ for some } n\in \N_0 \text{ or } x+\gamma_k=z \Big| (\varphi_n)_{n\in \N_0}, \langle \gamma_k, w_4 \rangle = \ell\right)\\
		&
		\leq \sup_{\ell \in \Z} \sup_{v_1,v_2,v_3 \in \Z^d}
		\bp_{\hat{\mu} \times \tilde{\mu}}\left(x+\gamma_k \in \{v_1,v_2,v_3\}  \Big| \langle \gamma_k, w_4 \rangle = \ell \right) \overset{\eqref{eq:inequality ssup}}{\leq} 3 N^{-3/2},
	\end{align*}
	and thus we get via a union bound that
	\begin{align}\label{eq:unionbound1}
		&\notag  \bp_{\hat{\mu} \times \tilde{\mu}}\left(\exists k\geq 6 \cdot 10^5 :  x+\gamma_k = \varphi_n \text{ for some } n\in \N_0 \text{ or } x+\gamma_k=z  \right)\\
		& \notag
		\leq \sum_{k=6\cdot 10^5}^{\infty}
		\bp_{\hat{\mu} \times \tilde{\mu}}\left(x+\gamma_k = \varphi_n \text{ for some } n\in \N_0 \text{ or } x+\gamma_k=z  \right)\\
		& \notag
		=
		\sum_{N=10^5}^{\infty} \ \sum_{r=0}^{5}
		\bp_{\hat{\mu} \times \tilde{\mu}}\left(x+\gamma_{6N+r} = \varphi_n \text{ for some } n\in \N_0 \text{ or } x+\gamma_{6N+r}=z  \right)\\
		&
		\leq  \sum_{N=10^5}^{\infty} \sum_{r=0}^{5}
		3N^{-3/2} \leq 18 \int_{99999}^{\infty}s^{-3/2} \md s \leq 0.12.
	\end{align}
	For $k < 6\cdot 10^5$, we get via the same calculation that
	\begin{align}\label{eq:unionbound2}
		\notag & \sum_{k=0}^{6\cdot 10^{5}-1} \bp_{\hat{\mu} \times \tilde{\mu}}\left( x+\gamma_k = \varphi_n \text{ for some } n \geq 6\cdot 10^{16} \right) \\
		&
		\leq  \sum_{k=0}^{6\cdot 10^5-1}  \sum_{j=10^{16}}^{\infty} \sum_{r=0}^{5}
		\bp_{\hat{\mu} \times \tilde{\mu}}\left( x+\gamma_k = \varphi_{6j+r}  \right)
		\leq  \sum_{k=0}^{6\cdot 10^5-1}  \sum_{j=10^{16}}^{\infty} 6
		j^{-3/2} \leq 0.08.
	\end{align}
	Furthermore, using that for all $j\in \{1,\ldots,4\}, w \in \Z^d$ one has $\psi_j(w)\leq \frac{28}{T(d)}$ by Claim \ref{claim:3.4}, we have by elementary properties of convolutions that
	\begin{align}
		\notag \sum_{k=1}^{6 \cdot 10^{5}-1} & \bp_{\hat{\mu} \times \tilde{\mu}} \left( x+\gamma_k = z \right) \leq 6 \cdot 10^{5}\left( \sup_{ w \in \Z^d, k\geq 1} \bp_{\hat{\mu} \times \tilde{\mu}} \left( x+\gamma_k = w \right)\right) \\
		&
		\label{eq:unionbound3}
		\leq 6\cdot  10^{5} \left( \sup_{ w \in \Z^d} \bp_{\hat{\mu} \times \tilde{\mu}} \left( x+\gamma_1 = w \right) \right)
		\leq 6 \cdot 10^{5} \frac{28}{T(d)} \leq 0.01, 
		\\
		\notag \sum_{k=1}^{6\cdot 10^{5}-1} & \  \sum_{n=0}^{6\cdot 10^{16}-1}  \bp_{\hat{\mu} \times \tilde{\mu}} \left( x+\gamma_k = \varphi_n \right) \leq 36\cdot 10^{21} \left(\sup_{w \in \Z^d} \bp_{\hat{\mu} \times \tilde{\mu}} \left( x+\gamma_1 = w \right) \right)\\
		&
		\label{eq:unionbound4}
		\leq 36 \cdot 10^{21} \cdot \frac{ 28}{T(d)} \leq 0.02, \text{ and}\\
		&
		\label{eq:unionbound5}
		\sum_{n=1}^{6\cdot 10^{16}-1}  \bp_{\hat{\mu} \times \tilde{\mu}} \left( x+\gamma_0 = \varphi_n \right) \leq 10^{17} \left(\sup_{w \in \Z^d} \bp_{\hat{\mu} \times \tilde{\mu}} \left( w = \varphi_1 \right) \right)
		\leq
		10^{17} \frac{28}{T(d)}
		\leq 10^{-5}.
	\end{align}
	To finish the proof, note that \eqref{eq:unionbound1} through \eqref{eq:unionbound5} imply \eqref{eq:hitting d=4} via another union bound over all pairs $(k,n) \in \N_0 \times \N_0 \setminus \{(0,0)\}$.
\end{proof}

\subsubsection{Intersection probabilities in dimension $d = 3$}\label{subsubsec:intersection 3}

\begin{lemma}\label{lemma:hitting d=3}
	Let $\left((\gamma_k)_{k\in \N_0},(\varphi_n)_{n\in \N_0}\right)$ be distributed according to the measure $\bp_{\hat{\mu} \times \tilde{\mu}}$ with $\hat{\mu},\tilde{\mu} \in M_3$. Then, for all $x,z \in \Z^3$ one has
	\begin{equation}\label{eq:hitting d=3}
		\bp_{\hat{\mu} \times \tilde{\mu}}(\exists (k,n) \in \N_0 \times \N_0\setminus \{(0,0)\} \text{ with } x+\gamma_k=\varphi_n \text{ or } \exists k > 0 \text{ with } x+\gamma_k = z)\leq \frac{1}{4} .
	\end{equation}
\end{lemma}

Before going to the proof of Lemma \ref{lemma:hitting d=3}, we prove an upper bound on the conditioned predictability profile of $\gamma_k$.

\begin{claim}\label{claim:hear kernel unpred}
	For all $\hat{\mu}, \tilde{\mu} \in M_3, n \in \N, k\geq 4n$ one has
	\begin{equation}\label{eq:k = 4n in dim 3}
		\sup_{w\in \Z^3, \ell \in \N_0}
		\bp_{\hat{\mu} \times \tilde{\mu}}\left(\gamma_k = w | \langle \gamma_k, e_3 \rangle = \ell \right) \leq  100 n^{-1.13}
	\end{equation}
	and thus
	\begin{equation}\label{eq:k general in dim 3}
		\sup_{w\in \Z^3, \ell \in \N_0}
		\bp_{\hat{\mu} \times \tilde{\mu}}\left(\gamma_k = w | \langle \gamma_k, e_3 \rangle = \ell \right) \leq 1 \wedge 100 \left( \Big \lfloor \frac{k}{4} \Big \rfloor \right)^{-1.13} \leq 600 k^{-1.13}.
	\end{equation}
\end{claim}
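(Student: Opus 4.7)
The plan is to exploit the product structure of $\gamma_k$ after conditioning on a well-chosen $\sigma$-algebra $\mathcal{G}$ that freezes every source of randomness except (i) the signs $(\sigma_i)$ from the unpredictable process $(S_m)$ that twist the first coordinates of the $\tilde X$-type increments, and (ii) the signs $\tau_i := \mathrm{sgn}(Y_2^i)$ of the second coordinates of the $Y$-type increments. After this conditioning, $\langle \gamma_k, e_1 \rangle$ is a deterministic shift of $\sum_i \sigma_i X_1^i$ with the $X_1^i > 0$ fixed, $\langle \gamma_k, e_2 \rangle$ is a deterministic shift of $\sum_i \tau_i |Y_2^i|$ with the $|Y_2^i| > 0$ fixed, and $\langle \gamma_k, e_3 \rangle$ is itself $\mathcal{G}$-measurable.

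First I would verify that the increments of any path in $M_3$ follow the $4$-periodic pattern $Z,X,Z,Y$ (possibly shifted when the starting index $N$ is nonzero), so that for $k \geq 4n$ the segment $\gamma_0^k$ contains at least $n$ increments of each of the $X$- and $Y$-types. Then, conditional on $\mathcal{G}$, the two random sums in the first and second coordinates are independent, since they depend on disjoint, jointly independent pieces of randomness (namely $(\sigma_i)$ versus the $Y^i$'s), and the $\tau_i$ are conditionally i.i.d.\ uniform on $\{-1,+1\}$. This symmetry follows from the isotropy assumption \eqref{eq:isotropy} applied via the signed permutation matrix $T = \mathrm{diag}(1,-1,1)$, which preserves $A_2$ and forces $Y_2^i$ to be symmetric around $0$ given $(Y_1^i, Y_3^i, |Y_2^i|)$.

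With the conditional independence in place, the bound follows by applying the two anti-concentration results separately: Proposition \ref{prop:unpredictable} (with $N=0$ and the $X_1^i$'s as the exchangeable positive $X_i$, using that they are independent of $(S_m)$) gives $\sup_z \bp_{\hat{\mu}\times\tilde{\mu}}(\langle \gamma_k, e_1\rangle = z \mid \mathcal{G}) \leq 100\,n^{-\log 2/\log 3}$, and Lemma \ref{lem1}, inequality \eqref{eq:sqrt n bound d=1}, gives $\sup_z \bp_{\hat{\mu}\times\tilde{\mu}}(\langle \gamma_k, e_2\rangle = z \mid \mathcal{G}) \leq n^{-1/2}$. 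Multiplying these bounds and using $\log 2/\log 3 + 1/2 > 1.13$ yields $\sup_w \bp_{\hat{\mu}\times\tilde{\mu}}(\gamma_k = w \mid \mathcal{G}) \leq 100\,n^{-1.13}$, and since $\{\langle \gamma_k, e_3\rangle = \ell\}$ is $\mathcal{G}$-measurable, the tower property transfers the bound to \eqref{eq:k = 4n in dim 3}. The second bound \eqref{eq:k general in dim 3} is then obtained by specializing to $n = \lfloor k/4 \rfloor$, combining with the trivial bound $\leq 1$, and using $4^{1.13} < 6$ to absorb the floor into the constant $600$. The most delicate step is the conditional-independence bookkeeping; in particular, Proposition \ref{prop:unpredictable} must remain valid after further conditioning on the deterministic values of the $X_1^i$'s, which is fine because the bound there is proven to be uniform over all realizations of the $X_i$'s (see the supremum over $X_{N+1},\ldots,X_{N+k}$ in \eqref{eq:sup einsetzen}).
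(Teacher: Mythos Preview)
Your overall strategy matches the paper's: condition on a carefully chosen $\sigma$-algebra so that $\langle\gamma_k,e_1\rangle$ and $\langle\gamma_k,e_2\rangle$ become conditionally independent, apply Proposition~\ref{prop:unpredictable} to the first coordinate and Lemma~\ref{lem1} to the second, and multiply. However, there is a genuine gap in how you set up $\mathcal{G}$ and invoke Proposition~\ref{prop:unpredictable}.

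By putting the individual values $X_1^i$ into $\mathcal{G}$, you make them deterministic conditional on $\mathcal{G}$, and a deterministic sequence of positive reals is \emph{not} exchangeable unless all entries coincide. Proposition~\ref{prop:unpredictable} genuinely needs exchangeability: the very first step of its proof replaces $\sum_i \sigma_i X_i$ by $\sum_{i\le N+Z^+}X_i - \sum_{i>N+Z^+}X_i$, using that the law of $\sum_i \tau_i X_i$ depends on $(\tau_i)$ only through $|\{i:\tau_i=+1\}|$. That is precisely the exchangeability of $(X_i)$. Your appeal to \eqref{eq:sup einsetzen} is a misreading: the supremum over $X_{N+1},\ldots,X_{N+k}$ there is taken for the \emph{rearranged} expression $\sum_{i\le N+Z^+}X_i-\sum_{i>N+Z^+}X_i$, \emph{after} the exchangeability step has already been used; it does not yield a uniform bound on $\sup_z\bp(\sum_i \sigma_i a_i=z)$ with the $a_i$ frozen in their original positions. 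Since the unpredictable signs $(\sigma_i)$ are themselves not exchangeable, no such bound is available from that proof. A related point: applying the proposition with $N=0$ ignores that a general $\hat\mu\in M_3$ already conditions on an initial segment $\zeta_0,\ldots,\zeta_N$, hence on $\sigma_1,\ldots,\sigma_{\lfloor N/4\rfloor}$; the correct application takes the $N$ of Proposition~\ref{prop:unpredictable} equal to that number of past signs.

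The paper's fix is to include only the \emph{multiset} $\big[X_1^i\big]_i$ in the conditioning $\sigma$-algebra $\cF$ (together with $\sum_i X_2^i$ rather than the individual $X_2^i$), so that the $(X_1^i)$ remain exchangeable conditional on $\cF$ and Proposition~\ref{prop:unpredictable} applies directly. This is the bookkeeping you flag as delicate, but the resolution differs from what you propose.
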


\begin{proof}
	We only do the proof in the case $k=4n$. The proof for $k\geq 4n$ is similar.
	As $(\gamma_i)_{i\in \N_0}$ is distributed according to a measure $\hat{\mu} \in M_3$, we get that $(\gamma_i)_{i\in \N_0}$ is distributed according to $(\zeta_{i+N}-\zeta_{N})_{i \in \N_0}$, conditionally on $\zeta_0,\ldots,\zeta_N$, where $(\zeta_i)_{i\in \N_0}$ has distribution $\mu$, cf. \eqref{eq:zeta def}. We will do the proof only for the case where $N=0 \mod 4$. The other three cases work analogously.
	Let $\cF$ be the $\sigma$-Algebra defined by
	\begin{align*}
		\cF=\sigma \Big( & \zeta_0,\ldots,\zeta_N, (\zeta_{N+2i+1}-\zeta_{N+2i})_{i\in \N_0} , \langle \zeta_{N+k} -\zeta_{N}, e_3 \rangle,
		\sum_{i=0}^{n-1} \langle \zeta_{N+4i+2}-\zeta_{N+4i+1},e_2 \rangle
		\\
		& \left(\langle \zeta_{N+4i+4}-\zeta_{N+4i+3}, e_1 \rangle \right)_{i\in \{0,\ldots,n-1\}},
		\Big[|\langle \zeta_{N+4i+2}-\zeta_{N+4i+1},e_1 \rangle| \Big]_{i\in \{0,\ldots,n-1\}}
		\Big).
	\end{align*}
	From the definition of $\zeta$ and $\cF$, it follows that
	\begin{align*}
		& \sup_{w\in \Z^3, \ell \in \N_0}
		\bp_{\hat{\mu} \times \tilde{\mu}}\left(\gamma_k = w \ \big| \langle \gamma_k, e_3 \rangle = \ell \right) \leq 
		\sup_{w_1,w_2\in \Z, \omega}
		\bp_{\hat{\mu} \times \tilde{\mu}}\left( \langle \zeta_{N+k}, e_1 \rangle = w_1 , \langle \zeta_{N+k}, e_2 \rangle = w_2 \ \big| \cF \right)(\omega)
	\end{align*}
	and thus we will focus on the predictability of $\zeta_{N+k}$ with respect to the $\sigma$-Algebra $\cF$ from here on.
	We write
	\begin{align*}
		& \zeta_{N+k}-\zeta_k = 
		\sum_{i=N}^{N+k-1} (\zeta_{i+1}-\zeta_N) = 
		\sum_{s=0}^{3}
		\sum_{i=0}^{n-1} (\zeta_{N+4i+1 + s}-\zeta_{N+4i + s}) .
	\end{align*}
	The first and the third summand in the above sum, i.e., the inner sum for $s=0$ ($\sum_{i=0}^{n-1} (\zeta_{N+4i+1}-\zeta_{N+4i})$) and the inner sum for $s=2$ ($\sum_{i=0}^{n-1} (\zeta_{N+4i+3}-\zeta_{N+4i+2})$) are measurable with respect to the $\sigma$-Algebra $\cF$, so we can ignore them when it comes to the predictability of $\zeta_{N+k}$ conditioned on $\cF$.
	Conditioned on the $\sigma$-Algebra $\cF$, for $w_1,w_2\in \Z$ the two events $\left\{\langle \zeta_{N+k}, e_1 \rangle = w_1\right\}$ and $\left\{\langle \zeta_{N+k}, e_2 \rangle = w_2\right\}$ are independent. Indeed, conditioned on $\cF$, the event $\left\{\langle \zeta_{N+k}, e_1 \rangle = w_1\right\}$ only depends on $\sum_{i=0}^{n-1} (\zeta_{N+4i+2}-\zeta_{N+4i+1})$, as all three sums $\sum_{i=0}^{n-1} \langle\zeta_{N+4i+s+1}-\zeta_{N+4i+s}, e_1 \rangle$ for $s\in \{0,2,3\}$ are measurable with respect to $\cF$. Contrary to that, the event $\left\{\langle \zeta_{N+k}, e_2 \rangle = w_2\right\}$ only depends on $\sum_{i=0}^{n-1} (\zeta_{N+4i+4}-\zeta_{N+4i+3})$, as all three sums $\sum_{i=0}^{n-1} \langle\zeta_{N+4i+s+1}-\zeta_{N+4i+s}, e_2 \rangle$ for $s\in \{0,1,2\}$ are measurable with respect to $\cF$. Thus we get that
	\begin{align}\label{eq:predictability independence}
		& \notag \sup_{w\in \Z^3, \ell \in \N_0}
		\bp_{\hat{\mu} \times \tilde{\mu}}\left(\gamma_k = w | \langle \gamma_k, e_3 \rangle = \ell \right) \leq 
		\sup_{w_1,w_2\in \Z, \omega}
		\bp_{\hat{\mu} \times \tilde{\mu}}\left( \langle \zeta_{N+k}, e_1 \rangle = w_1 , \langle \zeta_{N+k}, e_2 \rangle = w_2  | \cF \right)(\omega)\\
		& \notag 
		= \sup_{w_1,w_2\in \Z, \omega}
		\Big(\bp_{\hat{\mu} \times \tilde{\mu}}\left(\langle \zeta_{N+k}, e_1 \rangle = w_1  | \cF \right)(\omega) \cdot 
		\bp_{\hat{\mu} \times \tilde{\mu}}\left( \langle \zeta_{N+k}, e_2 \rangle = w_2  | \cF \right)(\omega)\Big)\\
		&
		\leq \left(\sup_{w_1\in \Z, \omega}
		\bp_{\hat{\mu} \times \tilde{\mu}}\left(\langle \zeta_{N+k}, e_1 \rangle = w_1  | \cF \right)(\omega)\right)
		\left(
		\sup_{w_2\in \Z, \omega}
		\bp_{\hat{\mu} \times \tilde{\mu}}\left( \langle \zeta_{N+k}, e_2 \rangle = w_2  | \cF \right)(\omega)\right).
	\end{align}
	For the first factor in the above product, we get that
	\begin{equation*}
		\sup_{w_1\in \Z, \omega}
		\bp_{\hat{\mu} \times \tilde{\mu}}\left(\langle \zeta_{N+k}, e_1 \rangle = w_1  | \cF \right)(\omega)
		=
		\sup_{w_1\in \Z, \omega}
		\bp_{\hat{\mu} \times \tilde{\mu}}\left( \sum_{i=0}^{n-1} \langle \zeta_{N+4i+2}-\zeta_{N+4i + 1}, e_1 \rangle = w_1  | \cF \right)(\omega)
	\end{equation*}
	By definition, we can write
	\begin{equation*}
		\sum_{i=0}^{n-1} \langle \zeta_{N+4i+2}-\zeta_{N+4i+1},e_1 \rangle
		=
		\sum_{i=0}^{n-1} \sigma_{N/4+i}  \cdot |\langle \zeta_{N+4i+2}-\zeta_{N+4i+1},e_1 \rangle|
	\end{equation*}
	where $\sigma_i \in \{-1,+1\}$ are random variables, such that the process $S=\left(S_m=\sum_{i=0}^{m} \sigma_i\right)$ satisfies \eqref{eq:predict 100 old}. Conditioned on $\cF$, the collection of random variables 
	\begin{equation*}
		\left[|\langle \zeta_{N+4i+2}-\zeta_{N+4i+1},e_1 \rangle| \right]_{i\in \{0,\ldots,n-1\}}
	\end{equation*}
	is a multiset containing $n$ positive numbers. Without any conditioning, the random variables $\left(|\langle \zeta_{N+4i+2}-\zeta_{N+4i+1},e_1 \rangle| \right)_{i\in \{0,\ldots,n-1\}}$ are i.i.d., as
	\begin{equation*}
		|\langle \zeta_{N+4i+2}-\zeta_{N+4i+1},e_1 \rangle| = X_1^{N/4+i}
	\end{equation*}
	by \eqref{eq:zeta def}. After conditioning, the random variables $\left(|\langle \zeta_{N+4i+2}-\zeta_{N+4i+1},e_1 \rangle| \right)_{i\in \{0,\ldots,n-1\}}$ are not independent anymore, but they are exchangeable, as $\cF$ contains only information about $\left[|\langle \zeta_{N+4i+2}-\zeta_{N+4i+1},e_1 \rangle| \right]_{i\in \{0,\ldots,n-1\}}$, $\sum_{i=0}^{n-1} \langle \zeta_{N+4i+2}-\zeta_{N+4i+1},e_3 \rangle$, and $\sum_{i=0}^{n-1} \langle \zeta_{N+4i+2}-\zeta_{N+4i+1},e_2 \rangle$.
	Thus we get by Proposition \ref{prop:unpredictable} that 
	\begin{align}\label{eq:predict factor 1}
		\notag &\sup_{w_1\in \Z, \omega}
		\bp_{\hat{\mu} \times \tilde{\mu}}\left(\langle \zeta_{N+k}, e_1 \rangle = w_1  | \cF \right)(\omega)
		=
		\sup_{w_1\in \Z, \omega}
		\bp_{\hat{\mu} \times \tilde{\mu}}\left( \sum_{i=0}^{n-1} \langle \zeta_{N+4i+2}-\zeta_{N+4i + 1}, e_1 \rangle = w_1  \Big| \cF \right)(\omega)\\
		&
		\leq
		\sup_{w_1\in \Z, \omega}
		\bp_{\hat{\mu} \times \tilde{\mu}}\left( \sum_{i=0}^{n-1} \sigma_{N/4+i} |\langle \zeta_{N+4i+2}-\zeta_{N+4i+1},e_1 \rangle| = w_1  \Big| \cF \right)(\omega)
		\leq 100 n^{-\frac{\log(2)}{\log(3)}}.
	\end{align}
	For the component in $e_2$-direction, we get that
	\begin{equation*}
		\langle \zeta_{N+k}, e_2 \rangle = 
		\langle \zeta_{N}, e_2 \rangle +
		\sum_{s=0}^{2}
		\sum_{i=0}^{n-1} \langle \zeta_{N+4i+s+1}-\zeta_{N+4i+s} , e_2 \rangle
		+ \sum_{i=0}^{n-1} \langle \zeta_{N+4i+4}-\zeta_{N+4i+3} , e_2 \rangle.
	\end{equation*}
	In the above expression, all the terms besides $\sum_{i=0}^{n-1} \langle \zeta_{N+4i+4}-\zeta_{N+4i+3} , e_2 \rangle$ are measurable with respect to $\cF$. Further, the numbers $|\langle \zeta_{N+4i+4}-\zeta_{N+4i+3} , e_2 \rangle|$ are always positive for $i=0,\ldots,n-1$, and the sign of $\langle \zeta_{N+4i+4}-\zeta_{N+4i+3} , e_2 \rangle$ is equally likely to be positive or negative, by construction. Thus we get by Lemma \ref{lem1}, \eqref{eq:sqrt n bound d=1}, that 
	\begin{align}\label{eq:predict factor 2}
		\notag
		&\sup_{w_2\in \Z, \omega}
		\bp_{\hat{\mu} \times \tilde{\mu}}\left( \langle \zeta_{N+k}, e_2 \rangle = w_2  | \cF \right)(\omega) \\
		&
		=
		\sup_{w_2\in \Z, \omega}
		\bp_{\hat{\mu} \times \tilde{\mu}}\left( \sum_{i=0}^{n-1} \langle \zeta_{N+4i+4}-\zeta_{N+4i+3} , e_2 \rangle = w_2  \Big| \cF \right)(\omega) \leq \frac{1}{\sqrt{n}}.
	\end{align}
	Combining \eqref{eq:predict factor 1} and \eqref{eq:predict factor 2} and inserting it in \eqref{eq:predictability independence} shows that
	\begin{equation*}
		\sup_{w\in \Z^3, \ell \in \N_0}
		\bp_{\hat{\mu} \times \tilde{\mu}}\left(\gamma_k = w | \langle \gamma_k, e_3 \rangle = \ell \right)
		\leq 100 n^{-\frac{\log(2)}{\log(3)}} \frac{1}{\sqrt{n}} \leq 100 n^{-1.13} .
	\end{equation*}
\end{proof}

With this, we can now move to the proof of Lemma \ref{lemma:hitting d=3}. We use a similar method as in the proof of Lemma \ref{lemma:hitting d=4}.

\begin{proof}[Proof of Lemma \ref{lemma:hitting d=3}]
	Each layer $\{y:\langle y, e_3\rangle=\ell\}$ can contain at most three points in $(z\varphi)_{-1}^{\infty}$.
	We thus get by Claim \ref{claim:hear kernel unpred} that for all $k\geq 4$
	\begin{align*}
		&\bp_{\hat{\mu} \times \tilde{\mu}}\left(x+\gamma_k = \varphi_n \text{ for some } n\in \N_0 \text{ or } x+\gamma_k=z \right)\\
		\leq
		& \sup_{\ell \in \N_0}
		\bp_{\hat{\mu} \times \tilde{\mu}}\left(x+\gamma_k = \varphi_n \text{ for some } n\in \N_0 \text{ or } x+\gamma_k=z \Big| \langle \gamma_k, e_3 \rangle = \ell, (\varphi_n)_{n\in \N_0} \right)\\
		\leq
		& \sup_{v_1,v_2,v_3 \in \Z^d,\ell \in \N_0}
		\bp_{\hat{\mu} \times \tilde{\mu}}\left(x+\gamma_k \in \{v_1,v_2,v_3\} \Big| \langle \gamma_k, e_3 \rangle = \ell, (\varphi_n)_{n\in \N_0} \right)
		\leq 3\cdot 600 k^{-1.13} = 1800 k^{-1.13}.
	\end{align*}
	Using a union bound over the values $k\geq 10^{40}$ we get that
	\begin{align}\label{eq:unionbound1 d=3}
		&\notag  \bp_{\hat{\mu} \times \tilde{\mu}}\left(\exists k\geq 10^{40} :  x+\gamma_k = \varphi_n \text{ for some } n\in \N_0 \text{ or } x+\gamma_k=z  \right)\\
		& \notag
		\leq \sum_{k=10^{40}}^{\infty}
		\bp_{\hat{\mu} \times \tilde{\mu}}\left(x+\gamma_k = \varphi_n \text{ for some } n\in \N_0 \text{ or } x+\gamma_k=z  \right)\\
		&
		\leq \sum_{k=10^{40}}^{\infty}
		1800 k^{-1.13}\leq 1800 \int_{10^{40}-1}^{\infty}s^{-1.13} \md s \leq \frac{1}{10}.
	\end{align}
	For $k < 10^{40}$, we get via the same calculation that
	\begin{align}\notag \label{eq:unionbound2 d=3}
		&\sum_{k=0}^{10^{40}-1} \bp_{\hat{\mu} \times \tilde{\mu}}\left( x+\gamma_k = \varphi_n \text{ for some } n \geq 10^{350} \right) \leq \sum_{k=0}^{10^{40}-1}  \sum_{j=10^{350}}^{\infty} 1800 j^{-1.13}\\
		& \leq 10^{40}1800 \int_{10^{350}-1}^{\infty} s^{-1.13} \md s \leq \frac{1}{10}.
	\end{align}
	For $k = 1$ and $w\in \Z^3$ one has by definition that
	\begin{equation*}
		\bp_{\hat{\mu} \times \tilde{\mu}} \left( \gamma_1 = w \right) \leq \left(\min_{i=1,2,3} \sum_{x\in A_i} {p_x}\right)^{-1} \leq \frac{12}{T(3)},
	\end{equation*}
	where the last inequality follows from Claim \ref{claim:a sets}.
	Using the definition of the measure $\mu$, one further deduces that 
	\begin{equation*}
		\bp_{\hat{\mu} \times \tilde{\mu}} \left( \gamma_k = w \right)  \leq \frac{12}{T(3)}
	\end{equation*}
	for all $k\geq 1$; For $k\geq 2$ this needs to hold because either $\gamma_1$ or $\gamma_2-\gamma_1$ takes values in the set $A_3$ and is independent of all other increments. Thus we get that
	\begin{align}
		\label{eq:unionbound3 d=3} \sum_{k=1}^{10^{40}-1} & \bp_{\hat{\mu} \times \tilde{\mu}} \left( x+\gamma_k = z \right)
		\leq 10^{40} \frac{12}{T(3)} \leq 0.01, 
		\\
		\label{eq:unionbound4 d=3}
		\sum_{k=1}^{10^{40}-1} \ & \sum_{n=0}^{ 10^{350}-1}  \bp_{\hat{\mu} \times \tilde{\mu}} \left( x+\gamma_k = \varphi_n \right)
		\leq \frac{10^{390}\cdot 12}{T(3)} \leq 0.01, 
		\text{ and}
		\\
		\label{eq:unionbound5 d=3}
		\sum_{n=1}^{ 10^{350}-1} & \bp_{\hat{\mu} \times \tilde{\mu}} \left( x+\gamma_0 = \varphi_n \right) \leq \frac{10^{350}\cdot 12}{T(3)} \leq 10^{-10}.
	\end{align}
	To finish the proof, note that \eqref{eq:unionbound1 d=3} through \eqref{eq:unionbound4 d=3} imply \eqref{eq:hitting d=3} via another union bound over all pairs $(k,n) \in \N_0 \times \N_0 \setminus \{(0,0)\}$.
\end{proof}

\subsection{The expected overlap in dimension $d=2$}\label{sec:d=2}

The main open problem that arises from Theorems \ref{theo:main} and \ref{theo:isotropic} is whether the same statements are also true in dimension $d=2$. We conjecture that both theorems are also true in dimension $d=2$. A first approach might be to look for another measure $\mu$ on self-avoiding paths on $\Z^2$ that satisfies
\begin{equation}\label{eq:finite}
	\be_{\mu \times \mu} \left[\ov(\gamma, \varphi)\right] < \infty.
\end{equation}
However, Lemma \ref{propo:cutsets} below gives a short argument that there is no measure on self-avoiding paths on $\Z^2$ satisfying \eqref{eq:finite}. For this, recall that for an infinite graph $G=(V,E)$ a set $F\subset E$ is called a {\sl cutset} separating $o\in V$ from infinity if every infinite self-avoiding path starting at $o$ uses at least one edge in $F$. For two infinite self-avoiding paths $\gamma = (\gamma_k)_{k\in \N_0}$ and $\varphi = (\varphi_k)_{k\in \N_0}$, recall that $\gamma \cap \varphi$ is the set of edges that are used in both paths. The reason why \eqref{eq:finite} can not hold in dimension $d=2$ is because for $d=2$, the expected size intersection $|\gamma \cap \varphi|$ needs to be infinite, i.e., $\be_{\mu \times \mu} \left[ |\gamma \cap \varphi| \right] = \infty$, see \eqref{eq:infini} below. This basically follows from the Nash-Williams inequality \cite{nash1959random}. An application of Jensen's inequality then shows that $\be_{\mu \times \mu} \left[\ov(\gamma, \varphi)\right] = \infty$.

\begin{lemma}\label{propo:cutsets}
	Let $G=(V,E)$ be a graph such that there exists a sequence of disjoint cutsets $(E_n)_{n\in \N}$ separating $o\in V$ from infinity and such that $\sum_{n=1}^{\infty} \frac{1}{|E_n|} = \infty$. Let $\mu$ be a measure on self-avoiding paths starting at $o$ and suppose that $\gamma=(\gamma_k)_{k\in \N_0}$ and $\varphi=(\varphi_k)_{k\in \N_0}$ are independently sampled from the measure $\mu$. Then, for any $q\in (0,1)$
	\begin{equation*}
		\be_{\mu \times \mu} \left[ q^{-|\gamma \cap \varphi|} \right] = \infty.
	\end{equation*}
\end{lemma}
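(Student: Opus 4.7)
The plan is to exploit that each infinite self-avoiding path from $o$ must cross every cutset $E_n$, which will produce a quantitative lower bound on the expected overlap $\be_{\mu \times \mu}[|\gamma \cap \varphi|]$ via a Cauchy-Schwarz / birthday-paradox type argument.

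For each $n \in \N$, let $X_n(\gamma) \in E_n$ denote the first edge of $\gamma$ that lies in $E_n$; this is well-defined $\mu$-almost surely because $\gamma$ is infinite and $E_n$ is a cutset separating $o$ from infinity. Writing $p_n(e) \coloneqq \mu(X_n = e)$ for $e \in E_n$, we have $\sum_{e \in E_n} p_n(e) = 1$, and Cauchy-Schwarz (or the power-mean inequality) yields
\[
\bp_{\mu \times \mu}\bigl(X_n(\gamma) = X_n(\varphi)\bigr) \;=\; \sum_{e \in E_n} p_n(e)^2 \;\geq\; \frac{1}{|E_n|}.
\]
Every such coincidence produces an edge $X_n(\gamma) = X_n(\varphi)$ lying in $\gamma \cap \varphi \cap E_n$, and since the cutsets $(E_n)_{n\in\N}$ are pairwise disjoint, different values of $n$ contribute pairwise distinct common edges. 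Therefore
\[
|\gamma \cap \varphi| \;\geq\; \sum_{n=1}^\infty \mathbbm{1}\{X_n(\gamma) = X_n(\varphi)\},
\]
and taking expectations together with the bound above (no independence between the coincidence indicators is needed, only linearity of expectation) gives
\[
\be_{\mu \times \mu}[|\gamma \cap \varphi|] \;\geq\; \sum_{n=1}^\infty \frac{1}{|E_n|} \;=\; \infty.
\]

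The remaining step is to promote the divergence of $\be[|\gamma \cap \varphi|]$ to the divergence of $\be[q^{-|\gamma \cap \varphi|}]$. This is immediate from the elementary inequality $q^{-x} \geq 1 + \log(1/q)\cdot x$, valid for all $x \geq 0$ and $q \in (0,1)$ (a direct consequence of $e^y \geq 1+y$), which yields
\[
\be_{\mu \times \mu}\bigl[q^{-|\gamma \cap \varphi|}\bigr] \;\geq\; 1 + \log(1/q)\cdot \be_{\mu \times \mu}[|\gamma \cap \varphi|] \;=\; \infty.
\]

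I do not anticipate a serious obstacle; the neat point is that one avoids any independence assumption across the different cutsets because disjointness already ensures that cutset-level coincidences contribute separate common edges, so linearity of expectation suffices. The role of the Nash-Williams-type condition $\sum_n 1/|E_n| = \infty$ is exactly to make the sum of the per-cutset Cauchy-Schwarz bounds diverge.
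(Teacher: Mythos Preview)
Your proof is correct and follows essentially the same route as the paper: both apply Cauchy--Schwarz to the edge-usage probabilities within each cutset to get the $1/|E_n|$ lower bound, sum over the disjoint cutsets to obtain $\be_{\mu\times\mu}[|\gamma\cap\varphi|]=\infty$, and then upgrade via convexity of $x\mapsto q^{-x}$. The only cosmetic differences are that the paper works with $b_e=\mu(e\in\gamma)$ (which sums to at least $1$ over $E_n$) rather than your first-crossing distribution $p_n$ (which sums to exactly $1$), and uses Jensen's inequality where you use the tangent-line bound $e^y\geq 1+y$.
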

\begin{proof}
	For $e\in E$, let $b_e$ be the probability that the edge $e$ is used by a path $(\gamma_k)_{k\in \N_0}$ sampled from to the measure $\mu$, i.e.,
	\begin{equation*}
		b_e = \mu\left( \{\gamma : e \in \gamma\} \right) = \bp_{\mu} \left( \exists k\in \N_0 : e=\{\gamma_k,\gamma_{k+1}\} \right).
	\end{equation*}
	As $E_n$ is a cutset, this directly implies that 
	\begin{equation*}
		\sum_{e \in E_n} b_e 
		= \sum_{e \in E_n} \bp_{\mu} \left(e \in \gamma\right)
		=  \be_{\mu} \left[ \sum_{e \in E_n} \mathbbm{1}_{\{e\in \gamma\}}\right] \geq 1 .
	\end{equation*}
	Thus, we get that
	\begin{equation*}
		\sum_{e \in E_n} \bp_{\mu \times \mu} \left(e \in \gamma \cap \varphi \right) = 
		\sum_{e \in E_n} \bp_{\mu} \left(e \in \gamma \right)^2
		=
		\sum_{e \in E_n} b_e^2 \geq \frac{\left(\sum_{e \in E_n} b_e \cdot 1\right)^2}{\sum_{e \in E_n} 1^2} \geq \frac{1}{|E_n|},
	\end{equation*}
	where we used the Cauchy-Schwarz inequality for the second to last inequality. Linearity of expectation implies that
	\begin{equation}\label{eq:infini}
		\be_{\mu \times \mu} \left[ |\gamma \cap \varphi| \right] = \sum_{e \in E} \bp_{\mu \times \mu} \left(e \in \gamma \cap \varphi \right)
		=
		\sum_{e \in E} b_e^2 
		\geq \sum_{n=1}^{\infty} \sum_{e \in E_n} b_e^2
		\geq \sum_{n=1}^{\infty} \frac{1}{|E_n|} = \infty.
	\end{equation}
	The convexity of the function $s\mapsto q^{-s}$ directly shows via Jensen's inequality that
	\begin{equation*}
		\be_{\mu \times \mu} \left[ q^{-|\gamma \cap \varphi|} \right] \geq q^{-\be_{\mu \times \mu} \left[ |\gamma \cap \varphi| \right]} = \infty.
	\end{equation*}
\end{proof}

For the case where $V=\Z^2$, $E\subset \left\{\{x,y\} \subset \Z^2: 0 < \|x-y\| \leq K\right\}$ for some $K\in \N$, and $o=\mz$, one can construct the sequence of cutsets $(E_n)_{n\in \N}$ defined by 
\begin{equation*}
	E_n=\{\{x,y\}\subset E: 3nK < \|x\| \leq 3(n+1)K, 3nK < \|y\| \leq 3(n+1)K  \}.
\end{equation*}
One checks that $(E_n)_{n\in \N}$ is a sequence of disjoint cutsets separating $\mz$ from infinity for which $|E_n|\approx n$ and thus $\sum_{n=1}^{\infty} \frac{1}{|E_n|}=\infty$. Thus, if 
\begin{equation*}
	q \coloneqq \sup_{\{x,y\} \in E} p_{x-y} < 1,
\end{equation*}
Lemma \ref{propo:cutsets} directly implies that
\begin{equation*}
	\be_{\mu \times \mu} \left[ \ov(\gamma,\varphi) \right] 
	\geq
	\be_{\mu \times \mu} \left[ q^{-|\gamma \cap \varphi|} \right] = \infty
\end{equation*}
for any measure $\mu$ on self-avoiding walks on $(\Z^2,E)$.

\section{Applications to the Potts model} \label{sec:potts}

In this chapter, we show analogous results of Theorems \ref{theo:main} and \ref{theo:isotropic} for the ferromagnetic Potts model.
The same was also done by Friedli and de Lima in \cite{friedli2006truncation} and we follow their notation and setup very closely. See also \cite{grimmett2006book,fortuin1972random,fortuinii1972random,fortuiniii1972random,georgii2001random} for more background. In the $q$-states Potts model ($q\in \N_{\geq 2}$), there is a spin $\sigma_x \in \{1,\ldots,q\}$ attached to each vertex $x\in \Z^d$. 
Let $\phi : \Z^d \setminus \{\mz\} \to \left[0,\infty\right)$ be a {\sl ferromagnetic potential}.
For $s\in \{1,\ldots,q\}$, $\Lambda_L = \{-L,\ldots,L\}^d$, and $\sigma \in \{1,\ldots,q\}^{\Lambda_L} \eqqcolon \Omega_{\Lambda_L}$, the Hamiltonian with pure boundary condition $s$ for a potential $\phi$ is defined by
\begin{equation*}
	H^s_{\phi,\Lambda_L}(\sigma) = - \sum_{\{x,y\}\subset \Lambda_L: x \neq y} \phi(x-y)\mathbbm{1}_{\{\sigma_x=\sigma_y\}} - \sum_{x\in \Lambda_L, y \notin \Lambda_L} \phi(x-y)\mathbbm{1}_{\{\sigma_x=s\}}.
\end{equation*}
At inverse temperature $\beta>0$, the Gibbs measure with pure $s$ boundary condition is defined by
\begin{equation*}
	\mu^{\beta,s}_{\phi,\Lambda_L}(\sigma) = \frac{\exp(-\beta  H^s_{\phi,\Lambda_L}(\sigma))}{Z} \text{ for } \sigma \in \Omega_{\Lambda_L},
\end{equation*}
where $Z$ is a normalizing constant so that the measure has total mass $1$. When the potential is not summable, i.e., when $\sum_x \phi(x)=\infty$, the Hamiltonian can (at least formally) not be defined as above, but looking at the minimizers of the energy one gets that $\sigma_x = s$ for all $x\in \Lambda_L$ with probability $1$ under the measure $\mu^{\beta,s}_{\phi,\Lambda_L}$. This trivial behavior was also described by Dyson \cite{dyson1969existence,friedli2006truncation}, as in the non-summable situation {\em ``[...] there is an infinite energy-gap between the ground states and all other states, so that the system is completely ordered at all finite temperatures, and there can be no question of a phase transition."}. Thus, one typically considers the Gibbs measure with respect to a summable potential. The non-summability $\sum_x \phi(x)=\infty$ means an infinite interaction volume, which in turn implies that the interaction has infinite range.
For a ferromagnetic potential $\phi:\Z^d \setminus \{\mz\} \to \left[0,\infty\right)$ with $\sum_{x} \phi(x) < \infty$, the infinite volume Gibbs measure $\mu^{\beta,s}_{\phi}$ is the subsequential limit of $\mu^{\beta,s}_{\phi,\Lambda_L}$ for $L\to \infty$.

For a potential $\phi : \Z^d \setminus \{\mz\} \to \left[0,\infty\right)$, we define the {\sl truncated potential} $\phi_N$ by $\phi_N(x)=\phi(x)\mathbbm{1}_{\{\|x\|\leq N\}}$. For this, we say that a potential $\phi$ is isotropic if the respective analogue of \eqref{eq:isotropy} holds $\phi$, we say that it is symmetric if the analogue of \eqref{eq:symmetry} holds, and we say that it is irreducible if the analogue of \eqref{eq:irreduc} holds. The result that
one can truncate a potential $\phi$ beyond some value $N \in \N$ and still have $\mu^{\beta,s}_{\phi_N}(\sigma_\mz = s) > \frac{1}{q}$ is the Potts analogue of the truncation question for percolation.

The next theorem is an analogue of Theorems \ref{theo:main} and \ref{theo:isotropic} for the Potts model. Part $(A)$ of Theorem \ref{theo:Potts} below is the analogue of Theorem \ref{theo:main}, whereas part $(B)$ is the analogue of Theorem \ref{theo:isotropic}. The proof of this theorem essentially follows from the previous results and the Fortuin–Kasteleyn (FK) representation \cite{fortuin1972random}.

\begin{theorem}\label{theo:Potts}
	\begin{enumerate}
		\item[(A)] 
		Let $\beta>0, d\geq 3$, and let $\phi:\Z^d \setminus \{\mz\} \to \left[0,\infty\right)$ be a potential that is not summable, irreducible, and symmetric. Then there exists $N \in \N$ such that
		\begin{equation*}
			\mu^{\beta,s}_{\phi_N}(\sigma_\mz = s) > \frac{1}{q}.
		\end{equation*}
		Further, 
		\begin{equation*}
			\lim_{N\to \infty} \mu^{\beta,s}_{\phi_N}(\sigma_\mz = s) = 1.
		\end{equation*}
		\item[(B)] 
		Let $d\geq 3$ and let $\phi:\Z^d \setminus \{\mz\} \to \left[0,\infty\right)$ be a potential that is summable, irreducible, and isotropic. Let $\beta > 0$, then
		\begin{equation*}
			\mu^{\beta,s}_{\phi}(\sigma_\mz = s) \geq 1 - \exp \left(1- \frac{\sum_{x \in \Z^d \setminus \{\mz\}} (\beta \phi(x)\wedge 1)}{2qT(d)} \right) ,
		\end{equation*}
		where $T(d)$ was defined in Theorem \ref{theo:isotropic}.
	\end{enumerate}
\end{theorem}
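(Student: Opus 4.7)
The natural framework is the Fortuin--Kasteleyn (random-cluster) representation, following Friedli and de Lima \cite{friedli2006truncation}. For each finite-range potential $\phi_N$ (which is summable since it has finite support on $\Z^d$), the Edwards--Sokal coupling yields
\begin{equation*}
	\mu^{\beta,s}_{\phi_N}(\sigma_\mz = s) = \frac{1}{q} + \Big(1 - \frac{1}{q}\Big)\, \phi^1_{p_N, q}(\mz \leftrightarrow \infty),
\end{equation*}
where $\phi^1_{p_N, q}$ denotes the wired infinite-volume random-cluster measure with edge parameters $p_N(x) = 1 - e^{-\beta \phi_N(x)}$ and cluster weight $q$. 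Thus part (A) reduces to showing $\phi^1_{p_N, q}(\mz \leftrightarrow \infty) > 0$ for some $N$, and part (B) to a quantitative lower bound on this percolation probability.

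The bridge to the results of the paper is the classical stochastic domination for $q \geq 1$: the wired random-cluster measure $\phi^1_{p_N, q}$ dominates the independent Bernoulli percolation measure $\p_{\tilde p_N}$ with edge probabilities
\begin{equation*}
	\tilde p_N(x) = \frac{p_N(x)}{p_N(x) + q(1 - p_N(x))}.
\end{equation*}
Using $1 - e^{-t} \geq \tfrac{1}{2}(t \wedge 1)$ for $t \geq 0$ together with the denominator bound $\leq q$, one gets $\tilde p_N(x) \geq \tfrac{1}{2q}(\beta \phi(x) \wedge 1)$ for $\|x\|\leq N$. Symmetry, isotropy, and irreducibility of $\tilde p_N$ transfer immediately from those of $\phi$, so the hypotheses of Theorems \ref{theo:main} and \ref{theo:isotropic} apply to the Bernoulli measures $\tilde p_N$ verbatim.

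For part (A), define the non-truncated comparison parameters $\tilde p(x) = p(x)/(p(x) + q(1-p(x)))$ with $p(x) = 1 - e^{-\beta \phi(x)}$. Non-summability of $\phi$ forces $\sum_x \tilde p(x) \geq \tfrac{1}{2q}\sum_x (\beta \phi(x)\wedge 1) = \infty$. Applying Theorem \ref{theo:main} to the Bernoulli measure with parameters $\tilde p$ produces an $N$ for which its truncation to edges of length $\leq N$ --- which is precisely Bernoulli$(\tilde p_N)$ --- has an infinite open cluster with positive probability. Stochastic domination then gives $\phi^1_{p_N, q}(\mz \leftrightarrow \infty) > 0$, and Edwards--Sokal gives $\mu^{\beta,s}_{\phi_N}(\sigma_\mz = s) > 1/q$. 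The statement $\lim_{N\to\infty}\mu^{\beta,s}_{\phi_N}(\sigma_\mz = s) = 1$ follows in the same way from \eqref{eq:main high prob}.

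For part (B), apply Theorem \ref{theo:isotropic} to Bernoulli$(\tilde p_N)$: provided the lower bound on $\sum_x \tilde p_N(x)$ exceeds $T(d)$, we obtain
\begin{equation*}
	\p_{\tilde p_N}(|K_\mz| < \infty) \leq \exp\Big(1 - \frac{\sum_{\|x\|\leq N}(\beta \phi(x) \wedge 1)}{2 q T(d)}\Big).
\end{equation*}
Combining this with domination ($1 - \phi^1_{p_N,q}(\mz \leftrightarrow \infty) \leq \p_{\tilde p_N}(|K_\mz|<\infty)$), Edwards--Sokal, and $1 - 1/q \leq 1$, yields the claimed lower bound on $\mu^{\beta,s}_{\phi_N}(\sigma_\mz = s)$; sending $N \to \infty$ and using weak convergence $\mu^{\beta,s}_{\phi_N} \to \mu^{\beta,s}_\phi$ recovers the full sum on $\Z^d \setminus \{\mz\}$ in the exponent. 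The theorem is essentially a corollary of Theorems \ref{theo:main} and \ref{theo:isotropic} once the FK machinery is in place; the only non-routine point is to invoke the long-range versions of the Edwards--Sokal coupling and the $q \geq 1$ stochastic domination for the infinite-volume FK measure, both of which are standard (see e.g.\ \cite{grimmett2006book}) but should be stated carefully.
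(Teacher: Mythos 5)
Your proposal is correct and follows essentially the same route as the paper, namely a reduction to Theorems \ref{theo:main} and \ref{theo:isotropic} via a comparison between the $q$-states Potts model and independent Bernoulli percolation. The only structural difference is that you reconstruct that comparison from first principles --- Edwards--Sokal coupling to get $\mu^{\beta,s}_{\phi_N}(\sigma_\mz = s) = \tfrac{1}{q} + \tfrac{q-1}{q}\phi^1_{p_N,q}(\mz\leftrightarrow\infty)$, followed by the standard $q\geq 1$ stochastic domination of the wired FK measure over Bernoulli$(p/(p+q(1-p)))$ --- whereas the paper invokes the resulting inequality as a black box, namely Proposition~\ref{prop:friedli} taken from Friedli and de Lima. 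Chaining your two steps recovers exactly the conclusion of Proposition~\ref{prop:friedli}, and the subsequent arithmetic (using $1-e^{-t} \geq \tfrac{1}{2}(t\wedge 1)$ and the denominator bound $\leq q$ to obtain $\tilde p_N(x) \geq \tfrac{1}{2q}(\beta\phi(x)\wedge 1)$) is the same as the paper's and yields the same final constant $2qT(d)$. One cosmetic point worth noting: Proposition~\ref{prop:friedli} as quoted in the paper carries an extra factor of $2$ in the exponent, $1-e^{-2\beta\phi(x)}$, which reflects Friedli--de Lima's Hamiltonian normalization; with the Hamiltonian as written in this paper your version $1-e^{-\beta\phi(x)}$ is the self-consistent one, and since both lead to the same lower bound $(\beta\phi(x)\wedge 1)/(2q)$ the discrepancy is immaterial. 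Your closing remark about sending $N\to\infty$ and using weak convergence to recover the full sum is actually a cleaner treatment than the paper's, whose proof of part (B) works with $\mu^{\beta,s}_\phi$ rather than $\mu^{\beta,s}_{\phi_N}$ (the statement appears to have a small typo in this regard); your reading is the reasonable one.
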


The theorem follows from Proposition \ref{prop:friedli}, which allows a comparison between the $q$-states Potts model and long-range percolation. The proposition is taken from \cite[Proposition 1]{friedli2006truncation}. See also \cite{aizenman1988discontinuity} and \cite[Section 6]{georgii2001random} for more details.

\begin{proposition}\label{prop:friedli}
	Let $\phi: \Z^d \setminus \{\mz\} \to \left[0,\infty\right)$ be a summable potential.
	Define $p_x$ by
	\begin{equation*}
		p_x \coloneqq \frac{1-e^{-2 \beta \phi(x)}}{1+(q-1) e^{-2 \beta \phi(x)}} .
	\end{equation*}
	Consider independent long-range percolation on $\Z^d$ where an edge $\{x,y\}$ is open with probability $p_{x-y}$, and write $\p(|K_\mz|=\infty)$ for the probability that the origin is contained in an infinite cluster.
	Then the long-range $q$-states Potts model with potential $\phi(x)$ at temperature $\beta > 0$ satisfies
	\begin{equation*}
		\mu_{\phi}^{\beta, s}\left(\sigma_{\mz}=s\right) \geq \frac{1}{q}+\frac{q-1}{q} \p(|K_\mz|=\infty) .
	\end{equation*}
\end{proposition}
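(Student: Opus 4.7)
The natural approach is the Fortuin--Kasteleyn/Edwards--Sokal representation of the long-range Potts model, followed by a Holley-type stochastic domination of the associated random-cluster measure by independent Bernoulli percolation with parameters $p_x$. The plan is to carry out the argument in finite volume $\Lambda_L$ with pure $s$ boundary condition, and then pass to the limit $L \to \infty$.

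On $\Omega_{\Lambda_L} \times \{0,1\}^{E_L}$ (with $E_L$ the set of pairs $\{x,y\}$ touching $\Lambda_L$, and exterior endpoints treated as frozen to spin $s$), introduce the Edwards--Sokal coupling
\begin{equation*}
\mathbb{P}_{\Lambda_L}(\sigma,\omega) \propto \prod_{e = \{x,y\}} \bigl[(1-\tilde p_e)\mathbbm{1}_{\{\omega_e = 0\}} + \tilde p_e \mathbbm{1}_{\{\omega_e = 1,\,\sigma_x = \sigma_y\}}\bigr], \qquad \tilde p_e := 1 - e^{-2\beta\phi(x-y)},
\end{equation*}
with the factor of $2$ dictated by the paper's Hamiltonian convention. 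A direct computation shows that the $\sigma$-marginal of $\mathbb{P}_{\Lambda_L}$ is $\mu^{\beta,s}_{\phi,\Lambda_L}$, while the $\omega$-marginal is the wired random-cluster measure $\mu^{\mathrm{FK},s}_{\Lambda_L}(\omega) \propto \prod_e \tilde p_e^{\omega_e}(1-\tilde p_e)^{1-\omega_e}\,q^{k_s(\omega)}$, with $k_s(\omega)$ the number of $\omega$-clusters that do not meet the boundary. Reading off the conditional distribution of $\sigma$ given $\omega$, each boundary-connected cluster is deterministically coloured $s$, whereas each remaining cluster is independently coloured uniformly in $\{1,\ldots,q\}$. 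Consequently,
\begin{equation*}
\mu^{\beta,s}_{\phi,\Lambda_L}(\sigma_\mz = s) = \mu^{\mathrm{FK},s}_{\Lambda_L}(\mz \leftrightarrow \partial\Lambda_L) + \frac{1}{q}\bigl(1 - \mu^{\mathrm{FK},s}_{\Lambda_L}(\mz \leftrightarrow \partial\Lambda_L)\bigr) = \frac{1}{q} + \frac{q-1}{q}\,\mu^{\mathrm{FK},s}_{\Lambda_L}(\mz \leftrightarrow \partial\Lambda_L).
\end{equation*}

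For the comparison with Bernoulli percolation, a direct computation of the single-edge conditional probability yields $\mu^{\mathrm{FK},s}_{\Lambda_L}(\omega_e = 1 \mid \omega^e) = \tilde p_e$ when the endpoints of $e$ are already connected through $\omega^e$ or to the boundary, and $\tilde p_e/(\tilde p_e + q(1-\tilde p_e))$ otherwise; for $q \geq 1$, both values are at least
\begin{equation*}
\frac{\tilde p_e}{\tilde p_e + q(1-\tilde p_e)} = \frac{1-e^{-2\beta\phi(x-y)}}{1+(q-1)e^{-2\beta\phi(x-y)}} = p_{x-y}.
\end{equation*}
Moreover $\omega \mapsto q^{k_s(\omega)}$ is log-supermodular (the standard FKG lattice condition for the random-cluster measure with $q \geq 1$), so Holley's inequality applies and yields that $\mu^{\mathrm{FK},s}_{\Lambda_L}$ stochastically dominates the independent Bernoulli product measure with parameters $(p_{x-y})$ on $E_L$. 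Since $\{\mz \leftrightarrow \partial\Lambda_L\}$ is an increasing event, this gives $\mu^{\mathrm{FK},s}_{\Lambda_L}(\mz \leftrightarrow \partial\Lambda_L) \geq \p(\mz \leftrightarrow \partial\Lambda_L)$.

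Finally, pass to the subsequential limit $L \to \infty$: the event $\{\sigma_\mz = s\}$ is local and its probability passes to the limit $\mu^{\beta,s}_{\phi}$, while $\p(\mz \leftrightarrow \partial\Lambda_L) \downarrow \p(|K_\mz| = \infty)$ by monotone convergence. The main obstacle is bookkeeping rather than conceptual: one must verify that in this long-range, infinite-volume setting the factor $q^{k_s(\omega)}$ and the Edwards--Sokal coupling remain well-defined despite the infinitely many edges crossing from the exterior into $\Lambda_L$. The summability hypothesis on $\phi$ guarantees, via Borel--Cantelli, that only finitely many such edges are open almost surely under either measure, so that $k_s(\omega)$ is finite; from there, the coupling computation, Holley's comparison, and the limit argument are routine adaptations of the standard finite-range FK/Edwards--Sokal machinery.
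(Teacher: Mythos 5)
Your argument is correct in structure and is exactly the standard route: the paper itself does not prove Proposition \ref{prop:friedli} but imports it from \cite[Proposition 1]{friedli2006truncation} (with pointers to \cite{aizenman1988discontinuity} and \cite[Section 6]{georgii2001random}), and the proof given there is the same Edwards--Sokal/random-cluster coupling with wired boundary, the cluster-colouring identity $\mu^{\beta,s}_{\phi,\Lambda_L}(\sigma_\mz=s)=\tfrac1q+\tfrac{q-1}{q}\,\mu^{\mathrm{FK},s}_{\Lambda_L}(\mz\leftrightarrow\partial\Lambda_L)$, the single-edge conditional bound and Holley comparison against the product measure with parameters $\tilde p_e/(\tilde p_e+q(1-\tilde p_e))$, and the infinite-volume limit. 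Your treatment of the long-range specifics (summability giving finitely many open edges to the exterior, hence $k_s(\omega)<\infty$, and monotone convergence of $\p(\mz\leftrightarrow\Lambda_L^c)$ to $\p(|K_\mz|=\infty)$) is also the right bookkeeping.

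One point to fix: the factor of $2$ is not ``dictated by the paper's Hamiltonian convention'' --- it is in tension with it. With the Hamiltonian exactly as written in Section \ref{sec:potts}, namely $-\sum \phi(x-y)\mathbbm{1}_{\{\sigma_x=\sigma_y\}}$, the Edwards--Sokal expansion of $e^{\beta\phi\mathbbm{1}_{\{\sigma_x=\sigma_y\}}}$ gives $\tilde p_e=1-e^{-\beta\phi(x-y)}$, so with your choice $\tilde p_e=1-e^{-2\beta\phi(x-y)}$ the $\sigma$-marginal of your coupling is not $\mu^{\beta,s}_{\phi,\Lambda_L}$ but the Gibbs measure for the convention $-\sum 2\phi(x-y)\mathbbm{1}_{\{\sigma_x=\sigma_y\}}$ (equivalently, inverse temperature $2\beta$), which is the convention used in the cited sources and the one under which the stated formula for $p_x$ comes out. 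So your proof establishes the proposition as stated under that convention; with the paper's literal Hamiltonian the identical argument gives the same bound with $\beta$ in place of $2\beta$. This discrepancy originates in the paper's own mixing of conventions rather than in your argument, but you should either adopt the factor-$2$ Hamiltonian explicitly or restate the conclusion with $\beta$ in place of $2\beta$; everything else (coupling computation, Holley domination, and the limit $L\to\infty$) is sound.
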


Using this proposition, we can deduce Theorem \ref{theo:Potts}.

\begin{proof}[Proof of Theorem \ref{theo:Potts}]
	Define $(p_x)_{x\in \Z^d\setminus \{\mz\}}$ by
	\begin{equation*}
		p_x \coloneqq   \frac{1-e^{-2 \beta \phi(x)}}{1+(q-1) e^{-2 \beta \phi(x)}} 
	\end{equation*}
	and for $N\in \N_0$ define $(p_x^N)_{x\in \Z^d\setminus \{\mz\}}$ by
	\begin{equation*}
		p_x^N \coloneqq  \frac{1-e^{-2 \beta \phi_N(x)}}{1+(q-1) e^{-2 \beta \phi_N(x)}} = p_x \cdot \mathbbm{1}_{\{\|x\|\leq N\}}.
	\end{equation*}
	We write $\p_N$ for the measure of independent long-range percolation on $\Z^d$ where an edge $\{x,y\}$ is open with probability $p_{x-y}^N$. Assuming that $\sum_{x\in \Z^d \setminus \{\mz\}} \phi(x) = \infty$, one also gets that $\sum_{x\in \Z^d \setminus \{\mz\}} p_x = \infty$. Theorem \ref{theo:main} implies that
	\begin{equation*}
		\lim_{N\to \infty} \p_N(|K_\mz| = \infty) = 1. 
	\end{equation*}
	From this, one can directly deduce part $(A)$ of Theorem \ref{theo:Potts} by Proposition \ref{prop:friedli}. For part $(B)$ of Theorem \ref{theo:Potts}, observe that
	\begin{align*}
		p_x = \frac{1-e^{-2 \beta \phi(x)}}{1+(q-1) e^{-2 \beta \phi(x)}} 
		\geq
		\frac{1-e^{-2 \beta \phi(x)}}{q} \geq \frac{\tfrac{1}{2}(2\beta \phi(x)\wedge 1)}{q}
		\geq
		\frac{\beta \phi(x)\wedge 1}{2q}.
	\end{align*}
	In the second inequality, we used the elementary inequality $1-e^{-s}\geq \frac{1}{2} (s\wedge 1)$ for $s\geq 0$. Using Proposition \ref{prop:friedli} and Theorem \ref{theo:isotropic}, we thus get that
	\begin{align*}
		& \mu_{\phi}^{\beta, s}\left(\sigma_{\mz}=s\right) \geq \frac{1}{q}+\frac{q-1}{q} \p(|K_\mz|=\infty)
		\geq \p(|K_\mz|=\infty) 
		\overset{\eqref{eq:isotrop finite cluster low prob}}{\geq } 1 - \exp \left(1- \frac{\sum_{x\in \Z^d \setminus \{\mz\} p_x}}{T(d)} \right)   \\
		& \geq 1 - \exp \left(1- \frac{\sum_{x \in \Z^d \setminus \{\mz\}} (\beta \phi(x)\wedge 1)}{2qT(d)} \right) .
	\end{align*}
\end{proof}

\paragraph*{Acknowledgements.}

I thank Tom Hutchcroft for very useful discussions. I thank Marek Biskup and Aernout van Enter for helpful comments on an earlier draft. I thank an anonymous referee for useful comments and suggestions.


\begin{thebibliography}{1}
	
	\footnotesize
	
	\bibitem{aizenman1988discontinuity}
	Michael Aizenman, Jennifer~T. Chayes, Lincoln Chayes, and Charles~M. Newman.
	\newblock Discontinuity of the magnetization in one-dimensional $1/|x-y|^2$
	Ising and Potts models.
	\newblock {\em Journal of Statistical Physics}, 50:1--40, 1988.
	
	\bibitem{alves2017note}
	Caio Alves, Marcelo~R. Hil{\'a}rio, Bernardo~N. B. de~Lima, and Daniel Valesin.
	\newblock A note on truncated long-range percolation with heavy tails on
	oriented graphs.
	\newblock {\em Journal of Statistical Physics}, 169:972--980, 2017.
	
	\bibitem{baumler2023continuity}
	Johannes B{\"a}umler.
	\newblock Continuity of the critical value for long-range percolation.
	\newblock {\em arXiv preprint arXiv:2312.04099}, 2023.
	
	\bibitem{benjamini1998unpredictable}
	Itai Benjamini, Robin Pemantle, and Yuval Peres.
	\newblock Unpredictable paths and percolation.
	\newblock {\em The Annals of Probability}, 26(3):1198--1211, 1998.
	
	\bibitem{berger2002transience}
	Noam Berger.
	\newblock Transience, recurrence and critical behavior for long-range
	percolation.
	\newblock {\em Communications in mathematical physics}, 226(3):531--558, 2002.
	
	\bibitem{burton1989density}
	Robert~M. Burton and Michael Keane.
	\newblock Density and uniqueness in percolation.
	\newblock {\em Communications in mathematical physics}, 121:501--505, 1989.
	
	\bibitem{campos2020truncation}
	Alberto~M. Campos and Bernardo~N. B. de~Lima.
	\newblock Truncation of long-range percolation model with square non-summable
	interactions.
	\newblock {\em arXiv preprint arXiv:2009.13671}, 2020.
	
	\bibitem{cox1983oriented}
	J.~Theodore Cox and Richard Durrett.
	\newblock Oriented percolation in dimensions $d\geq 4$: bounds and asymptotic
	formulas.
	\newblock In {\em Mathematical Proceedings of the Cambridge Philosophical
		Society}, volume~93, pages 151--162. Cambridge University Press, 1983.
	
	\bibitem{dembin2022almost}
	Barbara Dembin and Vincent Tassion.
	\newblock Almost sharp sharpness for Poisson Boolean percolation.
	\newblock {\em arXiv preprint arXiv:2209.00999}, 2022.
	
	\bibitem{duminil2016new}
	Hugo Duminil-Copin and Vincent Tassion.
	\newblock A new proof of the sharpness of the phase transition for Bernoulli
	percolation and the Ising model.
	\newblock {\em Communications in Mathematical Physics}, 343(2) (2016) 725--745.
	
	\bibitem{duminil2017new}
	Hugo Duminil-Copin and Vincent Tassion.
	\newblock A new proof of the sharpness of the phase transition for Bernoulli
	percolation on $\mathbb{Z}^d$.
	\newblock {\em L’Enseignement math{\'e}matique}, 62(1) (2017) 199--206.
	
	\bibitem{dyson1969existence}
	Freeman~J. Dyson.
	\newblock Existence of a phase-transition in a one-dimensional Ising ferromagnet.
	\newblock {\em Communications in Mathematical Physics}, 12:91–107, 1969.
	
	\bibitem{easo2023critical}
	Philip Easo and Tom Hutchcroft.
	\newblock The critical percolation probability is local.
	\newblock {\em arXiv preprint arXiv:2310.10983}, 2023.
	
	\bibitem{van2016truncated}
	Aernout~C. D. van Enter, Bernardo~N. B. de~Lima, and Daniel Valesin.
	\newblock Truncated long-range percolation on oriented graphs.
	\newblock {\em Journal of Statistical Physics}, 164:166--173, 2016.
	
	\bibitem{fortuin1972random}
	Cornelius~Marius Fortuin and Piet~W. Kasteleyn.
	\newblock On the random-cluster model: I. introduction and relation to other
	models.
	\newblock {\em Physica}, 57(4):536--564, 1972.
	
	\bibitem{fortuiniii1972random}
	Cornelis~Marius Fortuin.
	\newblock On the random-cluster model: II. the percolation model.
	\newblock {\em Physica}, 58(3):393--418, 1972.
	
	\bibitem{fortuinii1972random}
	Cornelis~Marius Fortuin.
	\newblock On the random-cluster model: III. the simple random-cluster model.
	\newblock {\em Physica}, 59(4):545--570, 1972.
	
	\bibitem{friedli2004longrange}
	Sacha Friedli, Bernardo N. B. de~Lima, and Vladas Sidoravicius.
	\newblock {On Long Range Percolation with Heavy Tails}.
	\newblock {\em Electronic Communications in Probability}, 9:175 -- 177,
	2004.
	
	\bibitem{friedli2006truncation}
	Sacha~Friedli and Bernardo N. B. de~Lima.
	\newblock On the truncation of systems with non-summable interactions.
	\newblock {\em Journal of Statistical Physics}, 122(6):1215--1236, 2006.
	
	\bibitem{georgii2001random}
	Hans-Otto Georgii, Olle H{\"a}ggstr{\"o}m, and Christian Maes.
	\newblock The random geometry of equilibrium phases.
	\newblock In {\em Phase transitions and critical phenomena}, volume~18, pages
	1--142. Elsevier, 2001.
	
	\bibitem{gomes2021upper}
	Pablo~Almeida Gomes, Alan Pereira, and Remy Sanchis.
	\newblock Upper bounds for critical probabilities in bernoulli percolation
	models.
	\newblock {\em arXiv preprint arXiv:2106.10388}, 2021.
	
	\bibitem{grimmett2006book}
	Geoffrey R.~Grimmett.
	\newblock The Random-Cluster Model,
	\newblock {\em volume 333 of Grundlehren der Mathematischen Wissenschaften}. Springer, 2006.
	
	\bibitem{grimmett1984connectedness}
	Geoffrey R.~Grimmett, Micheal~Keane, and John M.~Marstrand.
	\newblock On the connectedness of a random graph.
	\newblock In {\em Mathematical Proceedings of the Cambridge Philosophical
		Society}, volume~96, pages 151--166. Cambridge University Press, 1984.
	
	\bibitem{grimmett1990supercritical}
	Geoffrey~R. Grimmett and John~M. Marstrand.
	\newblock The supercritical phase of percolation is well behaved.
	\newblock {\em Proceedings of the Royal Society of London. Series A:
		Mathematical and Physical Sciences}, 430(1879):439--457, 1990.
	
	\bibitem{haggstrom1998nearest}
	Olle H{\"a}ggstr{\"o}m and Elchanan Mossel.
	\newblock Nearest-neighbor walks with low predictability profile and
	percolation in $2+\eps$ dimensions.
	\newblock {\em The Annals of Probability}, 26(3):1212--1231, 1998.
	
	
	
	\bibitem{heydenreich2017progress}
	Markus Heydenreich and Remco Van~der Hofstad.
	\newblock {\em Progress in high-dimensional percolation and random graphs}.
	\newblock Springer, 2017.	
	
	
	\bibitem{hoffman1998unpredictable}
	Christopher Hoffman.
	\newblock Unpredictable nearest neighbor processes.
	\newblock {\em Annals of probability}, pages 1781--1787, 1998.
	
	
	\bibitem{kalikow1988random}
	Steven Kalikow and Benjamin Weiss.
	\newblock When are random graphs connected.
	\newblock {\em Israel journal of mathematics}, 62:257--268, 1988.
	
	
	\bibitem{kesten1990asymptotics}
	Harry Kesten.
	\newblock Asymptotics in high dimensions for percolation.
	\newblock {\em Disorder in physical systems}, pages 219--240, 1990.
	
	\bibitem{klenke2013probability}
	Achim Klenke.
	\newblock {\em Probability theory: a comprehensive course}.
	\newblock Springer Science \& Business Media, 2013.
	
	\bibitem{de2008truncated}
	Bernardo N. B. de~Lima and Art{\"e}m Sapozhnikov.
	\newblock On the truncated long-range percolation on $\Z^2$.
	\newblock {\em Journal of applied probability}, 45(1):287--291, 2008.
	
	
	\bibitem{meester1996continuity}
	Ronald Meester and Jeffrey~E. Steif.
	\newblock On the continuity of the critical value for long range percolation in
	the exponential case.
	\newblock {\em Communications in mathematical physics}, 180(2):483--504, 1996.
	
	\bibitem{menshikov2001note}
	Mikhail~Menshikov, Vladas~Sidoravicius, and Marina~Vachkovskaia.
	\newblock A note on two-dimensional truncated long-range percolation.
	\newblock {\em Advances in Applied Probability}, 33(4):912--929, 2001.
	
	\bibitem{monch2023inhomogeneous}
	Christian M{\"o}nch.
	\newblock Inhomogeneous long-range percolation in the weak decay regime.
	\newblock {\em arXiv preprint arXiv:2303.02027}, 2023.
	
	\bibitem{nash1959random}
	C.~St.~J.~A. Nash-Williams.
	\newblock Random walk and electric currents in networks.
	\newblock In {\em Mathematical Proceedings of the Cambridge Philosophical
		Society}, volume~55, pages 181--194. Cambridge University Press, 1959.
	
	\bibitem{penrose1993spread}
	Mathew~D. Penrose.
	\newblock On the spread-out limit for bond and continuum percolation.
	\newblock {\em The Annals of Applied Probability}, 3(1):253--276, 1993.
	
	\bibitem{robbins1955remark}
	Herbert Robbins.
	\newblock A Remark on Stirling's Formula.
	\newblock {\em The American Mathematical Monthly}, 62(1):26--29, 1955.
	
	\bibitem{sidoravicius1999truncated}
	Vladas~Sidoravicius, Donatas~Surgailis, and Maria E.~Vares.
	\newblock On the truncated anisotropic long-range percolation on $\Z^2$.
	\newblock {\em Stochastic processes and their applications}, 81(2):337--349,
	1999.
	
	\bibitem{spanos2024spread}
	Panagiotis Spanos and Matthew Tointon.
	\newblock Spread-out percolation on transitive graphs of polynomial growth.
	\newblock {\em arXiv preprint arXiv:2404.17262}, 2024.
	
	
\end{thebibliography}
\end{document}